\documentclass[a4paper]{amsart}
\usepackage[UKenglish]{babel}
\usepackage[T2A]{fontenc}

\usepackage{amsmath, amssymb, amscd, amsthm, amsfonts}
\usepackage{mathdots}
\usepackage{mathrsfs}       

\usepackage{float}
\usepackage{graphicx}

\usepackage{bbm} 
\usepackage{microtype} 
\usepackage[normalem]{ulem}  

\usepackage[unicode, pdftex]{hyperref}
\graphicspath{{pictures}}
\usepackage{hyperref}

\usepackage{graphicx}
\usepackage{accents}

\usepackage{bm}

\usepackage[usenames]{color}
\usepackage{colortbl}

\usepackage{colonequals} 
\usepackage{mathtools}

\usepackage[all]{xy}
\usepackage{caption}
\usepackage{subcaption}

\usepackage{tikz,tkz-euclide}
\usepackage{tikz-cd}
\usetikzlibrary{arrows}
\usetikzlibrary {patterns,patterns.meta}
\usepackage{tikz-3dplot}
\usetikzlibrary{shapes.geometric, calc} 

\usepackage{colonequals} 
\usepackage{blkarray, bigstrut} %
\usepackage{comment}

\def\le{\leqslant}
\def\ge{\geqslant}

\DeclareMathOperator{\cone}{cone}

\newcommand{\hr}[2][]{\hyperref[#2]{#1~\ref{#2}}}

\makeatletter
\newtheorem*{rep@theorem}{\rep@title}
\newcommand{\newreptheorem}[2]{%
\newenvironment{rep#1}[1]{%
 \def\rep@title{#2 \ref{##1}}%
 \begin{rep@theorem}}%
 {\end{rep@theorem}}}
\makeatother

\newreptheorem{theorem}{Theorem}

\newtheorem{theorem}{Theorem}[section]
\newtheorem{proposition}[theorem]{Proposition}

\newtheorem{corollary}[theorem]{Corollary}

\theoremstyle{definition}
\newtheorem{example}[theorem]{Example}

\newtheorem{remark}[theorem]{Remark}

\numberwithin{equation}{section}

\DeclareMathAlphabet{\mathbbmsl}{U}{bbm}{m}{sl}

\makeatletter
\@namedef{subjclassname@2020}{%
  \textup{2020} Mathematics Subject Classification}
\makeatother

\begin{document}

\title[Chamber Decompositions of Moment Polytopes for Torus Actions of Positive Complexity]
{Chamber Decompositions of Moment Polytopes \\ for Torus Actions of Positive Complexity}

\author[Matvey A. Sergeev]{Matvey A. Sergeev}
\address{National Research University Higher School of Economics, Russian Federation} \email{matveys.studios@gmail.com}

\subjclass[2020]{57S12, 14M25, 52B40, 52C35, 52B05}

\keywords{Grassmann manifold, GKZ decomposition, torus action, chamber decomposition, toric varieties, hypersimplex, Johnson graph}

\begin{abstract}
The present work develops the results of the series of papers by Buchstaber and Terzi\'c on the standard actions of the compact torus $T^n = (S^1)^n$ on the complex Grassmann manifolds $G_{n,2}$. In those works, a hyperplane arrangement in $\mathbb{R}^n$ was introduced that determines the chamber decomposition of the hypersimplex $\Delta_{n,2}$ for the $T^n$-action on $G_{n,2}$. 

We introduce a notion of admissible graph for the standard action of the torus $T^n$ on the complex Grassmannian $G_{n,2}$. In terms of admissible graphs, we give a complete inductive description (with respect to $n \ge 4$) of the admissible polytopes in $\Delta_{n,2}$, as well as of the toric varieties arising as closures of $(\mathbb{C}^*)^n$-orbits on $G_{n,2}$ under the standard $(\mathbb{C}^*)^n$-action. 

We consider the $T^n$-equivariant Pl\"ucker embedding $G_{n,2} \hookrightarrow \mathbb{C}P^{N_2}$, where $N_2 = \binom{n}{2}-1$. Using admissible graphs, for the considered $T^n$-actions, we describe hyperplane arrangements in $\mathbb{R}^n$ that determine the chambers in $\Delta_{n,2}$ for the $T^n$-actions on $G_{n,2}$ and $\mathbb{C}P^{N_2}$. Gel'fand, Kapranov, and Zelevinsky introduced the notions of secondary polytopes and secondary fans in connection with the problem of describing triangulations of a given convex polytope, which is closely related to the Newton polytopes of discriminants and resultants. For the $T^n$-action on $\mathbb{C}P^{N_2}$, we show that the cones in $\mathbb{R}^n$ with vertex at the origin spanned by the chambers form the secondary fan of the cone spanned by the vertices of $\Delta_{n,2}$.
\end{abstract}

\maketitle

\tableofcontents

\section{Introduction}

In toric geometry and toric topology, a model example is the complex projective space $\mathbb{C}P^{n}$ for $n \in \mathbb{N}$ with the canonical actions of the algebraic torus $(\mathbb{C}^*)^{n+1}$ and the induced compact torus $T^{n+1} = (S^{1})^{n+1}$, respectively. It is a nonsingular projective toric variety, and the orbit space of the canonical action of $T^{n+1}$ on $\mathbb{C}P^{n}$ is homeomorphic to the simplex $\Delta^{n}$, which is the moment polytope of this action. A natural generalization of this example is the manifold of projective lines in $\mathbb{C}P^{n}$, i.e., the complex Grassmannian $G_{n,2}$ of $2$-planes in $\mathbb{C}^{n}$, endowed with the standard action of the algebraic torus $(\mathbb{C}^*)^n$ and the induced compact torus $T^{n}$. In this case, however, the moment polytope — the hypersimplex $\Delta_{n,2}$ — is not sufficient to describe the orbit space $G_{n,2}/T^n$. This difficulty arises from the fact that the effective action of $T^{n-1} = T^{n} / \operatorname{diag}(T^{n})$ on $G_{n,2}$ is not half dimensional for $n \ge 4$; rather, it is an action of \emph{positive complexity}, where the complexity is defined as the difference between half the (real) dimension of the manifold and the dimension of the torus that acts effectively. The complexity of the $T^{n-1}$-action on $G_{n,2}$ equals $n-3$, which is positive for $n \ge 4$. The $T^n$-actions on $G_{n,2}$ for $n \ge 4$ served as model examples for the new theory of compact torus actions of positive complexity developed by V.~M.~Buchstaber and S.~Terzi\'c \cite{BT1}.

In 1992, S.~Keel \cite{Keel} described the moduli space $\overline{\mathcal{M}}_{0,n}$ of stable curves of genus $0$ with $n$ marked points via an iterated blow-up procedure, and in 1993, M.~M.~Kapranov \cite{Kapranov} showed that the Chow quotient $G_{n,2} // (\mathbb{C}^*)^n$ of $G_{n,2}$ by the standard action of the algebraic torus $(\mathbb{C}^{*})^n$ is isomorphic to $\overline{\mathcal{M}}_{0,n}$. Aiming to provide a topological description of the orbit space $G_{n,2}/T^n$, Buchstaber and Terzi\'c introduced in 2019 the notion of a \emph{universal space of parameters} $\mathcal{F}_n$ \cite{BT2}, which is a smooth compact manifold. Using the results of Kapranov and Keel, Buchstaber and Terzi\'c constructed in \cite{BT5} an explicit diffeomorphisms between $\overline{\mathcal{M}}_{0,n}$, $G_{n,2} // (\mathbb{C}^*)^n$ and $\mathcal{F}_n$.

An important step for studying the $T^n$-equivariant topology of $G_{n,2}$ is the decomposition into $T^n$-invariant \emph{strata} $W_\sigma$, where the set $\sigma \subseteq \binom{[n]}{2}$, called \emph{admissible}, is a set of index pairs $(i,j)$ with $1 \le i < j \le n$. In terms of the compact torus $T^n$, the strata $W_\sigma$ were introduced by Buchstaber and Terzi\'c \cite{BT2}; they coincide with the strata introduced independently by I.~M.~Gelfand and V.~V.~Serganova \cite{GelfandSerganova} and by M.~Goresky and R.~MacPherson \cite{GelfandGoresky} in terms of the standard action of the algebraic torus $(\mathbb{C}^*)^n$ on $G_{n,2}$. The moment images of the strata $W_\sigma$ are the relative interiors of the so-called \emph{admissible polytopes} $P_\sigma$, which are the convex hulls of the vertices of the hypersimplex $\Delta_{n,2}$ corresponding to the index pairs $(i,j) \in \sigma$.

The admissible polytopes do not form a partition of the hypersimplex, since their interiors may have nonempty intersection in the case of an action of positive complexity. In \cite{GoreskyMacPherson}, Goresky and MacPherson suggested a certain decomposition of the hypersimplex $\Delta_{n,k}$ into disjoint \emph{chambers} $C_{\omega}$, where $\omega$ is a collection of admissible subsets of $[n]$. The chambers are obtained as intersections of the relative interiors of admissible polytopes, where $\omega$ is chosen so that
$$
C_{\omega} := \bigcap_{\sigma \in \omega} \mathring{P}_{\sigma} \quad \text{with } C_{\omega} \neq \varnothing,\quad \text{and}\quad C_{\omega} \cap \mathring{P}_{\sigma} = \varnothing \text{ for all } \sigma \notin \omega.
$$
These chambers yield a decomposition of $\Delta_{n,k}$, which is called a \emph{chamber decomposition}. In the case $k = 2$, Buchstaber and Terzi\'c \cite{BT3} gave an inductive description of the chambers for $\Delta_{n,2} \subset \mathbb{R}^n$ in terms of a hyperplane arrangement $\mathcal{A}_n$ in $\mathbb{R}^n$.

Using the universal space of parameters $\mathcal{F}_n$ and the chamber decomposition of $\Delta_{n,2}$, Buchstaber and Terzi\'c constructed in \cite{BT2,BT3} a \emph{topological model} $(U_n, H_n)$ for the orbit space $G_{n,2}/T^n$, where $U_n = \Delta_{n,2} \times \mathcal{F}_n$ and $H_n \colon \Delta_{n,2} \times \mathcal{F}_n \to G_{n,2}/T^n$ is a continuous projection that in a certain sense resolves the singularities of the orbit space $G_{n,2}/T^n$. An important result of Buchstaber and Terzi\'c is that in order to construct the projection $H_n$, it suffices to do so for $C_{\omega} \times F_{\omega}$, where $F_\omega \subset \mathcal{F}_n$ are the \emph{spaces of parameters of the chambers} $C_{\omega}$, which give a decomposition of $\mathcal{F}_n$.

In \cite{Hasset}, B. Hassett introduced the moduli spaces $\overline{\mathcal{M}}_{0,\mathcal{A}}$ of $\mathcal{A}$-weighted stable curves of genus $0$. In \cite{BT6}, Buchstaber and Terzi\'c constructed a Hassett category whose objects are the spaces $\overline{\mathcal{M}}_{0,\mathcal{A}}$.
Of particular interest is that the structural data of the model $(U_n, H_n)$ can be realized in terms of Hassett category, that is, in terms of spaces $\overline{\mathcal{M}}_{0,\mathcal{A}}$ and the morphisms between them. Moreover, it was proved that the spaces $\overline{\mathcal{M}}_{0,\mathcal{A}}$ admit an embedding into the orbit space $G_{n,2}/T^{n}$.


For the Grassmannian $G_{n,2}$ there is a classical Pl\"ucker embedding into $\mathbb{C}P^{N_2} = \mathbb{C}P(\Lambda^2 \mathbb{C}^n)$, where $N_2 = \binom{n}{2}-1$. The second exterior power $\Lambda^2 \mathbb{C}^n$ of the standard representation of $T^n$ on $\mathbb{C}^n$ gives rise to a homomorphism $T^n \to T^{\binom{n}{2}}$. Composing this with the standard action of $T^{\binom{n}{2}}$ on $\mathbb{C}P^{N_2}$, we obtain a $T^n$-action on $\mathbb{C}P^{N_2}$ of complexity $\binom{n}{2} - n = \frac{n^2 - 3n}{2}$, which is positive for $n \ge 4$. It is essential that the Pl\"ucker embedding is equivariant with respect to the $T^n$-actions on $G_{n,2}$ and $\mathbb{C}P^{N_2}$.

Let $\widehat \mu$ denote the moment map for the standard $T^{\binom{n}{2}}$-action on $\mathbb{C}P^{N_2}$, i.e.
$$\widehat \mu \colon \mathbb{C}P^{N_2} \longrightarrow \big( \mathfrak t^{\binom{n}{2}} \big)^*.$$
The dual Lie algebra homomorphism 
$$\Gamma \colon \big( \mathfrak t^{\binom{n}{2}} \big)^* \longrightarrow (\mathfrak t^n)^*$$
induced by the homomorphism $T^n \to T^{\binom{n}{2}}$ yields a moment map
$$\widetilde \mu \colon \mathbb{C}P^{N_2} \longrightarrow (\mathfrak t^n)^*.$$
By definition, the moment map $\mu$ for the $T^n$-action on $G_{n,2}$ coincides with the composition of $\widetilde \mu$ with the Pl\"ucker embedding. The image of $\widetilde \mu$ is the hypersimplex $\Delta_{n,2}$.

The introduced $T^n$-action on $\mathbb{C}P^{N_2}$ allows us to define a $T^n$-invariant stratification $\widetilde W_\sigma$ of the projective space $\mathbb{C}P^{N_2}$. A stratum $\widetilde W_\sigma$ intersects $G_{n,2}$ if and only if $\sigma$ is an admissible subset of $[n]$, and in that case 
$\widetilde W_\sigma \cap G_{n,2} = W_{\sigma}$. Thus, we can divide the subsets $\sigma \subseteq \binom{[n]}{2}$ into two groups: those for which $\widetilde W_\sigma \cap G_{n,2} = \varnothing$ and those for which $\widetilde W_\sigma \cap G_{n,2} \neq \varnothing$. Subsets $\sigma$ from the first group give rise to convex polytopes $P_{\sigma}$, which are the convex hulls of the vertices of the hypersimplex indexed by the set $\sigma$. This leads to a \emph{new chamber decomposition} of $\Delta_{n,2}$ given by the collection of polytopes $P_\sigma$ spanned by the vertices of $\Delta_{n,2}$ corresponding to $\sigma$ from both groups, i.e., these polytopes are not necessarily admissible. Thus, for each of the considered $T^n$-actions, a chamber decomposition of $\Delta_{n,2}$ is defined. It was shown in \cite{BT0} that these two chamber decompositions of $\Delta_{n,2}$ coincide for $n = 4$ but differ for $n > 4$. However, the chamber decomposition of $\Delta_{n,2}$ corresponding to the $T^n$-action on $\mathbb{C}P^{N_2}$ had not yet been explicitly described.

The present paper has two goals. First, for the standard $T^n$-action on $G_{n,2}$, we introduce the notion of an \emph{admissible graph}. We give a complete classification of admissible graphs as complete multipartite graphs. It is known that all admissible polytopes in $\Delta_{n,2}$ of dimension at most $n-3$ lie on the boundary $\partial \Delta_{n,2}$, and the boundary $\partial \Delta_{n,2}$ itself is the union of $n$ copies of the hypersimplices $\Delta_{n-1,2}$ and $n$ copies of the simplices $\Delta^{n-2}$. Therefore, many results concerning $\Delta_{n,2}$ are inductive in nature. Using the language of admissible graphs, we inductively describe, with respect to $n$, the toric varieties arising as closures of orbits of the standard $(\mathbb{C}^*)^n$-action on $G_{n,2}$ (see Theorems~\ref{(n-2)-dim_adm} and \ref{(n-1)-dim_adm}) as well as the corresponding moment polytopes, which are the admissible polytopes. Furthermore, we describe the chamber decomposition of $\Delta_{n,2}$ arising from the $T^n$-action on $G_{n,2}$, as given by Buchstaber and Terzi\'c in \cite{BT3}, in terms of admissible graphs. Specifically, we establish a one-to-one correspondence between the hyperplane arrangement $\mathcal{A}_n$ and complete bipartite graphs on $n$ vertices.  

Second, we study the chamber decomposition of $\Delta_{n,2}$ with respect to the $T^n$-action on $\mathbb{C}P^{N_2}$. We show that this decomposition is the intersection of the GKZ (Gel'fand--Kapranov--Zelevinsky) fan $\Sigma_{\Gamma}$, or \emph{secondary fan}, of the configuration of vectors in $(\mathfrak t^n)^*$ defined by the projection $\Gamma$ (see \cite{GelfandKapranovZelevinsky} or \cite{Panov, Arzhantsev}), with the hypersimplex $\Delta_{n,2}$. Furthermore, we introduce a new hyperplane arrangement $\widetilde{\mathcal{A}}_n$ in $(\mathfrak t^n)^*$ that determines the chambers of maximal dimension in $\Delta_{n,2}$. As a consequence, we obtain a combinatorial description of the polytopes $P_\sigma$ for arbitrary (not necessarily admissible) $\sigma$, as well as the regular values of the moment map $\widetilde \mu$.

In the final section, we reinterpret our results on the $T^n$-action on $G_{n,2}$ in terms of the complexes of admissible polytopes introduced by Buchstaber and Terzi\'c within the framework of $(2n,k)$-theory \cite{BT1}.


\section{Preliminaries: Torus actions, moment maps, and Gale duality}

\subsection{Torus actions and moment maps}

Let $G_{n,2} = G_2(\mathbb{C}^n)$ denote the Grassmannian of complex $2$-planes in $\mathbb{C}^n$, and let $T^n = (S^1)^n$ be a compact torus acting on $G_{n,2}$ in the standard way. There is a classical \emph{Pl\"ucker embedding}
\begin{equation}
    p \colon G_{n,2} \longrightarrow \mathbb{C}P^{\binom{n}{2}-1} = \mathbb{C}P(\Lambda^2 \mathbb{C}^n),\quad L \mapsto (P^{ij}(L))_{1\le i<j\le n},
\end{equation}
which takes a $2$-plane $L \in G_{n,2}$ to its homogeneous coordinates $P^{ij}(L)$ in $\mathbb{C}P^{N_2} = \mathbb{C}P^{\binom{n}{2}-1}$, called \emph{Pl\"ucker coordinates}. The second exterior power $\Lambda^2 \mathbb{C}^n$ of the standard representation of $T^n$ on $\mathbb{C}^n$ gives rise to a homomorphism $T^n \to T^{\binom{n}{2}}$. Composing this with the standard action of $T^{\binom{n}{2}}$ on $\mathbb{C}P^{N_2}$, we obtain an action of $T^n$ on $\mathbb{C}P^{N_2}$. Observe that the Pl\"ucker embedding is $T^n$-equivariant with respect to the $T^n$-actions on $G_{n,2}$ and $\mathbb{C}P^{N_2}$.

Let $\mathfrak t^n$ and $\mathfrak t^{\binom{n}{2}}$ denote the Lie algebras of the tori $T^n$ and $T^{\binom{n}{2}}$, respectively. There are canonical bases $\bm e_1,\ldots, \bm e_n$ in $\mathfrak t^n$ and $\bm e_{12},\ldots, \bm e_{ij},\ldots, \bm e_{n-1,n}$ in $\mathfrak t^{\binom{n}{2}}$ given by the one-parameter subgroups of the coordinate circles. The differential of the homomorphism $T^n \to T^{\binom{n}{2}}$ gives a linear monomorphism
\begin{equation}
    \Gamma^* \colon \mathfrak{t}^n \longrightarrow \mathfrak t^{\binom{n}{2}},\quad \bm e_i \mapsto \sum_{j \ne i} \bm e_{ij}.
\end{equation}
and a dual projection map
\begin{equation}\label{projectionmap}
    \Gamma \colon \big( \mathfrak t^{\binom{n}{2}} \big)^* \longrightarrow (\mathfrak{t}^n)^*,\quad \bm e^{ij} \mapsto \bm e^i + \bm e^j.
\end{equation}

The standard $T^{\binom{n}{2}}$-action on $\mathbb{C}P^{N_2}$ is Hamiltonian with moment map
$$\widehat \mu \colon \mathbb{C}P^{N_2} \longrightarrow (\mathfrak t^{\binom{n}{2}})^*,\quad [z_{12}: \ldots : z_{ij}: \ldots : z_{n-1,n}]_{1\le i < j \le n} \mapsto \frac{1}{\sum_{i<j}|z_{ij}|^2} \sum_{i<j}|z_{ij}|^2 \bm e^{ij}.$$
Therefore the $T^n$-action on $G_{n,2}$ is Hamiltonian, and the corresponding moment map $\mu$ factors as $\mu = \Gamma \circ \widehat \mu \circ p$, where $p$ is the Pl\"ucker embedding. The explicit formula is
\begin{equation}
    \mu \colon G_{n,2} \longrightarrow (\mathfrak{t}^n)^*,\quad L \mapsto \frac{1}{\sum_{i<j}|P^{ij}(L)|^2} \sum_{i<j} |P^{ij}(L)|^2 (\bm e^i + \bm e^j).
\end{equation}
By the convexity theorem \cite{Atiyah}, the image $\mu(G_{n,2})$ is the convex hull of the images of the $T^n$-fixed points, namely $\bm e^i + \bm e^j$, which is the \emph{hypersimplex} $\Delta_{n,2}$. Therefore, $\Gamma$ takes the vertices $\bm e^{ij}$ of the standard simplex $\Delta^N \subset \big( \mathfrak t^{\binom{n}{2}} \big)^*$ to the vertices $\bm e^i + \bm e^j$ of the hypersimplex $\Delta_{n,2} \subset (\mathfrak t^n)^*$. Observe that $\Gamma$ gives a bijection between the vertex sets of simplex $\Delta^N$ and hypersimplex $\Delta_{n,2}$.

There is a nice $T^n$-invariant stratification on the Grassmannian $G_{n,2}$ due to Gel'fand–Serganova \cite{GelfandSerganova} and Buchstaber–Terzi\'c \cite{BT2}. We give the definition of this stratification following Buchstaber and Terzi\'c. Let $\sigma$ be a set of index pairs $\sigma = \{ (i_1,j_1),\ldots, (i_m,j_m)\}$ with $1 \le i_k < j_k \le n$. Then the set 
$$W_{\sigma} = \{ L \in G_{n,2} \mid P^{ij}(L) \neq 0 \Leftrightarrow (i,j) \in \sigma\}$$
is called a \emph{stratum} if it is nonempty, and $\sigma$ is then called an \emph{admissible set}. One can observe that the moment image of the stratum $W_\sigma$ equals the relative interior of the convex polytope 
$$P_\sigma = \operatorname{conv} \{ \bm e^i + \bm e^j \mid (i,j) \in \sigma \} \subseteq \Delta_{n,2},$$
which is called an \emph{admissible polytope}.

The $T^n$-action on $\mathbb{C}P^{N_2}$ is Hamiltonian, and the corresponding moment map $\widetilde \mu$ factors as $\widetilde \mu = \Gamma \circ \widehat \mu$ with explicit formula
\begin{equation}
   \widetilde \mu \colon \mathbb{C}P^{N_2} \longrightarrow (\mathfrak t^n)^*,\quad [z_{12}: \ldots : z_{ij}: \ldots : z_{n-1,n}]_{1\le i < j \le n} \mapsto \frac{1}{\sum_{i<j}|z_{ij}|^2} \sum_{i<j}|z_{ij}|^2 (\bm e^i + \bm e^j). 
\end{equation}
The fixed points of the $T^n$-action on $\mathbb{C}P^{N_2}$ are precisely the Pl\"ucker coordinates of the standard coordinate $2$-planes in $G_{n,2}$. Therefore, the image of the moment map $\widetilde \mu$ is again $\Delta_{n,2}$.

For the $T^{n}$-action on $\mathbb{C}P^{N_2}$, we have a standard stratification given by
$$
\widetilde{W}_{\sigma} = \{ [z_{12}: \ldots : z_{ij}: \ldots : z_{n-1,n}]_{1\le i < j \le n} \in \mathbb{C}P^{N_2} \mid z_{ij} \neq 0 \Leftrightarrow (i,j) \in \sigma \},
$$
where $\sigma$ is an arbitrary set of index pairs, not necessarily admissible. Consequently, one can define convex polytopes $P_{\sigma}$ spanned by the vertices of the hypersimplex indexed by, generally speaking, any set $\sigma$.

\begin{example}
    Consider $P_\sigma$ for $\sigma = \{ (1,2), (1,3), (3,4)\}$. This polytope is the image of the stratum 
$$\widetilde W_\sigma = \{ (z_{12}:z_{13}:0:0:0:z_{34}) \in \mathbb{C}P^{5} \mid z_{12}, z_{13}, z_{34} \neq 0 \}$$
under $\widetilde \mu$. Observe that $\widetilde W_\sigma \cap G_{4,2} = \varnothing$. Therefore, $\widetilde W_\sigma$ is not formed by a stratum in $G_{4,2}$.
\end{example}

\subsection{Linear Gale duality}

Let $V$ be a real $k$-dimensional vector space and let $\Gamma = \{ \gamma_1,\ldots, \gamma_m\}$ be a \emph{configuration} of vectors in the dual space $V^* = \operatorname{Hom}_{\mathbb{R}}(V;\mathbb{R})$, such that $\operatorname{span} \Gamma = V^*$. Repetitions among $\gamma_1,\ldots, \gamma_m$ are allowed.

The configuration $\Gamma$ defines a linear projection $\Gamma \colon \mathbb{R}^m \to V^*$ which takes the $i$-th standard basis vector $\bm e_i \in \mathbb{R}^m$ to $\gamma_i$. Therefore, we have an exact sequence
\begin{equation}\label{exactseq}
    0 \longrightarrow W \longrightarrow \mathbb{R}^m \xrightarrow{\;\Gamma\;} V^* \longrightarrow 0,
\end{equation}
where $W = \ker \Gamma$ and $\dim_{\mathbb{R}} W = m-k$. Identifying $\mathbb{R}^m$ with its dual $(\mathbb{R}^m)^*$ via the isomorphism that takes the standard basis vector $\bm e_i$ to its dual $\bm e^i$, and applying duality to (\ref{exactseq}), we obtain the dual exact sequence
\begin{equation}\label{dualexactseq}
    0 \longrightarrow V \xrightarrow{\;\Gamma^*\;} (\mathbb{R}^m)^*\cong \mathbb{R}^m \xrightarrow{\;A\;} W^* \longrightarrow 0,
\end{equation}
where $\Gamma^*(\bm v) = \big( \langle \gamma_1, \bm v \rangle, \ldots, \langle \gamma_m, \bm v \rangle \big)$
and $A$ is the restriction map from $(\mathbb{R}^m)^* \cong \mathbb{R}^m$ to $W^*$. Let $\bm a_i := A(\bm e_i)$. Then the linear functional $\bm a_i$ takes $\bm w \in W$ to its $i$-th coordinate in the standard basis. By definition, the \emph{linear Gale transform} takes the vector configuration $\Gamma$ in $V^*$ to the vector configuration $A = \{ \bm a_1, \ldots, \bm a_m \}$ in $W^*$, which is said to be the \emph{Gale dual} of $\Gamma$ and vice versa (see \cite{GelfandKapranovZelevinsky} or \cite{Panov, Arzhantsev}).

The $T^n$-action on $G_{n,2}$ naturally gives rise to the exact sequence
\begin{equation}
    0 \longrightarrow W \longrightarrow \big( \mathfrak t^{\binom{n}{2}} \big)^* \cong \mathbb{R}^{\binom{n}{2}} \xrightarrow{\; \Gamma \;} \mathbb{R}^{n} \cong V^*= (\mathfrak{t}^n)^* \longrightarrow 0.
\end{equation}
Here $V=\mathfrak{t}^n$, $W = \ker \Gamma$ with $\dim_{\mathbb{R}} W = \binom{n}{2} - n$, and the configuration $\gamma_{ij} = \Gamma(\bm e_{ij}) = \bm e^i + \bm e^j$ for $1 \le i < j \le n$ is defined. By duality, we obtain the dual exact sequence
\begin{equation}
    0 \longrightarrow V = \mathfrak{t}^n \cong (\mathbb{R}^n)^* \xrightarrow{\; \Gamma^*\;} \big( \mathbb{R}^{\binom{n}{2}} \big)^* \cong \mathbb{R}^{\binom{n}{2}} \xrightarrow{\; A \;} W^* \longrightarrow 0.
\end{equation}
The Gale dual configuration $\bm a_{ij} = A(\bm e_{ij})$ for $1\le i<j\le n$ is defined.


\section{The Grassmannian $G_{n,2}$ and Admissible Graphs}\label{sectionGrassmannian}

In this section, we reinterpret the chamber decomposition of the hypersimplex $\Delta_{n,2}$ given by admissible polytopes. We not only describe the corresponding hyperplane arrangement $\mathcal{A}_n$ in $(\mathfrak t)^* \cong \mathbb{R}^n$ given in \cite{BT3}, but also relate its description to \emph{complete multipartite graphs}, which we call \emph{admissible graphs}. Using this combinatorial object, we also describe by induction the toric varieties arising as closures of orbits of the standard action of the algebraic torus $(\mathbb{C}^*)^n$ on $G_{n,2}$, as well as the corresponding moment polytopes, i.e., all possible admissible polytopes.

\subsection{The Admissible Graph}

We introduce a new notion of the \emph{admissible graph} $G_{\sigma}$. By definition, it is an abstract graph on $n$ vertices such that $E(G_\sigma) = \sigma$. If $\sigma$ is arbitrary, we obtain an arbitrary graph; if $\sigma$ is an admissible set of index pairs, then $G_\sigma$ is an admissible graph. 

Let us describe the geometric meaning of the admissible graph. Consider the standard $(n-1)$-simplex $\Delta^{n-1}$ in $\mathbb{R}^n$. Scaling it by a factor of $2$ yields
\[
2\Delta^{n-1} = \{(x_1,\ldots, x_n)\in \mathbb{R}^n \mid x_1+\ldots+x_n = 2,\ x_i \ge 0\}.
\]
Note the inclusion $\Delta_{n,2} \subseteq 2\Delta^{n-1}$, where the hypersimplex $\Delta_{n,2}$ is inscribed in the dilated simplex $2\Delta^{n-1}$. We illustrate this inclusion for $n=4$, i.e., $\Delta_{4,2} \subseteq 2\Delta^3$, in Fig.~\ref{fig:embed_hypersimplex}.

\begin{figure}[ht]
    \centering
    \tikzset{every picture/.style={line width=0.75pt}} 

\begin{tikzpicture}[x=0.75pt,y=0.75pt,yscale=-1,xscale=1]

\draw    (132,194.04) -- (323.78,278.6) ;
\draw    (400,155.78) -- (323.78,278.6) ;
\draw  [dash pattern={on 0.84pt off 2.51pt}]  (132,194.04) -- (400,155.78) ;
\draw    (288.13,30.6) -- (132,194.04) ;
\draw    (288.13,30.6) -- (323.78,278.6) ;
\draw    (288.13,30.6) -- (400,155.78) ;
\draw [line width=1.5]    (210.06,112.32) -- (227.89,236.32) ;
\draw [shift={(227.89,236.32)}, rotate = 81.82] [color={rgb, 255:red, 0; green, 0; blue, 0 }  ][fill={rgb, 255:red, 0; green, 0; blue, 0 }  ][line width=1.5]      (0, 0) circle [x radius= 4.36, y radius= 4.36]   ;
\draw [shift={(210.06,112.32)}, rotate = 81.82] [color={rgb, 255:red, 0; green, 0; blue, 0 }  ][fill={rgb, 255:red, 0; green, 0; blue, 0 }  ][line width=1.5]      (0, 0) circle [x radius= 4.36, y radius= 4.36]   ;
\draw [line width=1.5]    (305.95,154.6) -- (227.89,236.32) ;
\draw [shift={(227.89,236.32)}, rotate = 133.69] [color={rgb, 255:red, 0; green, 0; blue, 0 }  ][fill={rgb, 255:red, 0; green, 0; blue, 0 }  ][line width=1.5]      (0, 0) circle [x radius= 4.36, y radius= 4.36]   ;
\draw [shift={(305.95,154.6)}, rotate = 133.69] [color={rgb, 255:red, 0; green, 0; blue, 0 }  ][fill={rgb, 255:red, 0; green, 0; blue, 0 }  ][line width=1.5]      (0, 0) circle [x radius= 4.36, y radius= 4.36]   ;
\draw [line width=1.5]  [dash pattern={on 1.69pt off 2.76pt}]  (361.89,217.19) -- (227.89,236.32) ;
\draw [line width=1.5]    (344.06,93.19) -- (361.89,217.19) ;
\draw [shift={(344.06,93.19)}, rotate = 81.82] [color={rgb, 255:red, 0; green, 0; blue, 0 }  ][fill={rgb, 255:red, 0; green, 0; blue, 0 }  ][line width=1.5]      (0, 0) circle [x radius= 4.36, y radius= 4.36]   ;
\draw [line width=1.5]    (344.06,93.19) -- (305.95,154.6) ;
\draw [line width=1.5]    (361.89,217.19) -- (305.95,154.6) ;
\draw [shift={(305.95,154.6)}, rotate = 228.21] [color={rgb, 255:red, 0; green, 0; blue, 0 }  ][fill={rgb, 255:red, 0; green, 0; blue, 0 }  ][line width=1.5]      (0, 0) circle [x radius= 4.36, y radius= 4.36]   ;
\draw [shift={(361.89,217.19)}, rotate = 228.21] [color={rgb, 255:red, 0; green, 0; blue, 0 }  ][fill={rgb, 255:red, 0; green, 0; blue, 0 }  ][line width=1.5]      (0, 0) circle [x radius= 4.36, y radius= 4.36]   ;
\draw [line width=1.5]    (305.95,154.6) -- (210.06,112.32) ;
\draw [line width=1.5]  [dash pattern={on 1.69pt off 2.76pt}]  (344.06,93.19) -- (210.06,112.32) ;
\draw [line width=1.5]  [dash pattern={on 1.69pt off 2.76pt}]  (266,174.91) -- (210.06,112.32) ;
\draw [shift={(266,174.91)}, rotate = 228.21] [color={rgb, 255:red, 0; green, 0; blue, 0 }  ][fill={rgb, 255:red, 0; green, 0; blue, 0 }  ][line width=1.5]      (0, 0) circle [x radius= 4.36, y radius= 4.36]   ;
\draw [line width=1.5]  [dash pattern={on 1.69pt off 2.76pt}]  (227.89,236.32) -- (266,174.91) ;
\draw [line width=1.5]  [dash pattern={on 1.69pt off 2.76pt}]  (361.89,217.19) -- (266,174.91) ;
\draw [line width=1.5]  [dash pattern={on 1.69pt off 2.76pt}]  (344.06,93.19) -- (266,174.91) ;

\end{tikzpicture}
    \caption{The hypersimplex $\Delta_{4,2}$ (a regular octahedron) inscribed in the dilated tetrahedron $2\Delta^3$.}
    \label{fig:embed_hypersimplex}
\end{figure}

Geometrically, the edges of $\Delta_{n,2}$ realize the \emph{Johnson graph} $J(n,2) = G(\Delta_{n,2})$. 
The embedding $\Delta_{n,2} \hookrightarrow 2\Delta^{n-1}$ reveals that $J(n,2)$ can also be viewed 
as the \emph{line graph} of the edge graph of $2\Delta^{n-1}$. Indeed, the vertices $\bm e_i+ \bm e_j$ of $\Delta_{n,2}$ correspond to the midpoints of the edges of $2\Delta^{n-1}$ (since the vertices of $2\Delta^{n-1}$ are $2\bm e_i$). Therefore, adjacency in $J(n,2)$ corresponds to sharing a vertex in $K_n$. Since the edge graph of $2\Delta^{n-1}$ 
is the complete graph $K_n$, we obtain the well-known identification $J(n,2) = L(K_n)$.

By the embedding $\Delta_{n,2} \hookrightarrow 2\Delta^{n-1}$, the graph $G_{\sigma}$ naturally lives on the $1$-skeleton of $2\Delta^{n-1}$. We have a remarkable duality: the graph $G(P_{\sigma})$ is obtained by taking the line graph of $G_{\sigma}$, which geometrically corresponds to connecting the midpoints of the edges of $2\Delta^{n-1}$ that belong to $G_{\sigma}$.

\begin{example}
    Let $n=4$. Consider the admissible set 
    $$
    \sigma = \bigl\{ (1,2), (2,3), (3,4), (1,4) \bigr\}
    $$
    for the $T^4$-action on $G_{4,2}$. 
    The corresponding admissible polytope $P_{\sigma}$ is a diagonal square in the octahedron $\Delta_{4,2}$.
    The admissible graph $G_{\sigma}$ is the complete bipartite graph $K_{2,2}$ with parts 
    $A_{1}=\{1,3\}$ and $A_{2} = \{2,4\}$; see Fig.~\ref{fig:Admissible_graph_ex}.
\end{example}

\begin{figure}[ht]
    \centering
    \tikzset{every picture/.style={line width=0.75pt}} 

\begin{tikzpicture}[x=0.75pt,y=0.75pt,yscale=-1,xscale=1]

\draw    (132,194.04) -- (323.78,278.6) ;
\draw [color={rgb, 255:red, 0; green, 37; blue, 255 }  ,draw opacity=1 ]   (400,155.78) -- (323.78,278.6) ;
\draw [shift={(323.78,278.6)}, rotate = 121.82] [color={rgb, 255:red, 0; green, 37; blue, 255 }  ,draw opacity=1 ][fill={rgb, 255:red, 0; green, 37; blue, 255 }  ,fill opacity=1 ][line width=0.75]      (0, 0) circle [x radius= 3.35, y radius= 3.35]   ;
\draw [shift={(400,155.78)}, rotate = 121.82] [color={rgb, 255:red, 0; green, 37; blue, 255 }  ,draw opacity=1 ][fill={rgb, 255:red, 0; green, 37; blue, 255 }  ,fill opacity=1 ][line width=0.75]      (0, 0) circle [x radius= 3.35, y radius= 3.35]   ;
\draw [color={rgb, 255:red, 0; green, 37; blue, 255 }  ,draw opacity=1 ] [dash pattern={on 0.84pt off 2.51pt}]  (132,194.04) -- (400,155.78) ;
\draw [color={rgb, 255:red, 0; green, 37; blue, 255 }  ,draw opacity=1 ]   (288.13,30.6) -- (132,194.04) ;
\draw [shift={(132,194.04)}, rotate = 133.69] [color={rgb, 255:red, 0; green, 37; blue, 255 }  ,draw opacity=1 ][fill={rgb, 255:red, 0; green, 37; blue, 255 }  ,fill opacity=1 ][line width=0.75]      (0, 0) circle [x radius= 3.35, y radius= 3.35]   ;
\draw [shift={(288.13,30.6)}, rotate = 133.69] [color={rgb, 255:red, 0; green, 37; blue, 255 }  ,draw opacity=1 ][fill={rgb, 255:red, 0; green, 37; blue, 255 }  ,fill opacity=1 ][line width=0.75]      (0, 0) circle [x radius= 3.35, y radius= 3.35]   ;
\draw [color={rgb, 255:red, 0; green, 37; blue, 255 }  ,draw opacity=1 ]   (288.13,30.6) -- (323.78,278.6) ;
\draw [shift={(323.78,278.6)}, rotate = 81.82] [color={rgb, 255:red, 0; green, 37; blue, 255 }  ,draw opacity=1 ][fill={rgb, 255:red, 0; green, 37; blue, 255 }  ,fill opacity=1 ][line width=0.75]      (0, 0) circle [x radius= 3.35, y radius= 3.35]   ;
\draw [shift={(288.13,30.6)}, rotate = 81.82] [color={rgb, 255:red, 0; green, 37; blue, 255 }  ,draw opacity=1 ][fill={rgb, 255:red, 0; green, 37; blue, 255 }  ,fill opacity=1 ][line width=0.75]      (0, 0) circle [x radius= 3.35, y radius= 3.35]   ;
\draw    (288.13,30.6) -- (400,155.78) ;
\draw [line width=1.5]    (210.06,112.32) -- (227.89,236.32) ;
\draw [shift={(227.89,236.32)}, rotate = 81.82] [color={rgb, 255:red, 0; green, 0; blue, 0 }  ][fill={rgb, 255:red, 0; green, 0; blue, 0 }  ][line width=1.5]      (0, 0) circle [x radius= 4.36, y radius= 4.36]   ;
\draw [shift={(210.06,112.32)}, rotate = 81.82] [color={rgb, 255:red, 0; green, 0; blue, 0 }  ][fill={rgb, 255:red, 0; green, 0; blue, 0 }  ][line width=1.5]      (0, 0) circle [x radius= 4.36, y radius= 4.36]   ;
\draw [line width=1.5]    (305.95,154.6) -- (227.89,236.32) ;
\draw [shift={(227.89,236.32)}, rotate = 133.69] [color={rgb, 255:red, 0; green, 0; blue, 0 }  ][fill={rgb, 255:red, 0; green, 0; blue, 0 }  ][line width=1.5]      (0, 0) circle [x radius= 4.36, y radius= 4.36]   ;
\draw [shift={(305.95,154.6)}, rotate = 133.69] [color={rgb, 255:red, 0; green, 0; blue, 0 }  ][fill={rgb, 255:red, 0; green, 0; blue, 0 }  ][line width=1.5]      (0, 0) circle [x radius= 4.36, y radius= 4.36]   ;
\draw [line width=1.5]  [dash pattern={on 1.69pt off 2.76pt}]  (361.89,217.19) -- (227.89,236.32) ;
\draw [shift={(227.89,236.32)}, rotate = 171.87] [color={rgb, 255:red, 0; green, 0; blue, 0 }  ][fill={rgb, 255:red, 0; green, 0; blue, 0 }  ][line width=1.5]      (0, 0) circle [x radius= 4.36, y radius= 4.36]   ;
\draw [shift={(361.89,217.19)}, rotate = 171.87] [color={rgb, 255:red, 0; green, 0; blue, 0 }  ][fill={rgb, 255:red, 0; green, 0; blue, 0 }  ][line width=1.5]      (0, 0) circle [x radius= 4.36, y radius= 4.36]   ;
\draw [line width=1.5]    (344.06,93.19) -- (361.89,217.19) ;
\draw [shift={(361.89,217.19)}, rotate = 81.82] [color={rgb, 255:red, 0; green, 0; blue, 0 }  ][fill={rgb, 255:red, 0; green, 0; blue, 0 }  ][line width=1.5]      (0, 0) circle [x radius= 4.36, y radius= 4.36]   ;
\draw [shift={(344.06,93.19)}, rotate = 81.82] [color={rgb, 255:red, 0; green, 0; blue, 0 }  ][fill={rgb, 255:red, 0; green, 0; blue, 0 }  ][line width=1.5]      (0, 0) circle [x radius= 4.36, y radius= 4.36]   ;
\draw [line width=1.5]    (344.06,93.19) -- (305.95,154.6) ;
\draw [shift={(305.95,154.6)}, rotate = 121.82] [color={rgb, 255:red, 0; green, 0; blue, 0 }  ][fill={rgb, 255:red, 0; green, 0; blue, 0 }  ][line width=1.5]      (0, 0) circle [x radius= 4.36, y radius= 4.36]   ;
\draw [shift={(344.06,93.19)}, rotate = 121.82] [color={rgb, 255:red, 0; green, 0; blue, 0 }  ][fill={rgb, 255:red, 0; green, 0; blue, 0 }  ][line width=1.5]      (0, 0) circle [x radius= 4.36, y radius= 4.36]   ;
\draw [color={rgb, 255:red, 255; green, 0; blue, 0 }  ,draw opacity=1 ][line width=1.5]    (361.89,217.19) -- (305.95,154.6) ;
\draw [shift={(305.95,154.6)}, rotate = 228.21] [color={rgb, 255:red, 255; green, 0; blue, 0 }  ,draw opacity=1 ][fill={rgb, 255:red, 255; green, 0; blue, 0 }  ,fill opacity=1 ][line width=1.5]      (0, 0) circle [x radius= 4.36, y radius= 4.36]   ;
\draw [shift={(361.89,217.19)}, rotate = 228.21] [color={rgb, 255:red, 255; green, 0; blue, 0 }  ,draw opacity=1 ][fill={rgb, 255:red, 255; green, 0; blue, 0 }  ,fill opacity=1 ][line width=1.5]      (0, 0) circle [x radius= 4.36, y radius= 4.36]   ;
\draw [color={rgb, 255:red, 255; green, 0; blue, 0 }  ,draw opacity=1 ][line width=1.5]    (305.95,154.6) -- (210.06,112.32) ;
\draw [shift={(210.06,112.32)}, rotate = 203.79] [color={rgb, 255:red, 255; green, 0; blue, 0 }  ,draw opacity=1 ][fill={rgb, 255:red, 255; green, 0; blue, 0 }  ,fill opacity=1 ][line width=1.5]      (0, 0) circle [x radius= 4.36, y radius= 4.36]   ;
\draw [shift={(305.95,154.6)}, rotate = 203.79] [color={rgb, 255:red, 255; green, 0; blue, 0 }  ,draw opacity=1 ][fill={rgb, 255:red, 255; green, 0; blue, 0 }  ,fill opacity=1 ][line width=1.5]      (0, 0) circle [x radius= 4.36, y radius= 4.36]   ;
\draw [line width=1.5]  [dash pattern={on 1.69pt off 2.76pt}]  (344.06,93.19) -- (210.06,112.32) ;
\draw [color={rgb, 255:red, 255; green, 0; blue, 0 }  ,draw opacity=1 ][line width=1.5]  [dash pattern={on 1.69pt off 2.76pt}]  (266,174.91) -- (210.06,112.32) ;
\draw [shift={(266,174.91)}, rotate = 228.21] [color={rgb, 255:red, 255; green, 0; blue, 0 }  ,draw opacity=1 ][fill={rgb, 255:red, 255; green, 0; blue, 0 }  ,fill opacity=1 ][line width=1.5]      (0, 0) circle [x radius= 4.36, y radius= 4.36]   ;
\draw [line width=1.5]  [dash pattern={on 1.69pt off 2.76pt}]  (227.89,236.32) -- (266,174.91) ;
\draw [color={rgb, 255:red, 255; green, 0; blue, 0 }  ,draw opacity=1 ][line width=1.5]  [dash pattern={on 1.69pt off 2.76pt}]  (361.89,217.19) -- (266,174.91) ;
\draw [shift={(266,174.91)}, rotate = 203.79] [color={rgb, 255:red, 255; green, 0; blue, 0 }  ,draw opacity=1 ][fill={rgb, 255:red, 255; green, 0; blue, 0 }  ,fill opacity=1 ][line width=1.5]      (0, 0) circle [x radius= 4.36, y radius= 4.36]   ;
\draw [shift={(361.89,217.19)}, rotate = 203.79] [color={rgb, 255:red, 255; green, 0; blue, 0 }  ,draw opacity=1 ][fill={rgb, 255:red, 255; green, 0; blue, 0 }  ,fill opacity=1 ][line width=1.5]      (0, 0) circle [x radius= 4.36, y radius= 4.36]   ;
\draw [line width=1.5]  [dash pattern={on 1.69pt off 2.76pt}]  (344.06,93.19) -- (266,174.91) ;

\draw (282,11) node [anchor=north west][inner sep=0.75pt]    {$1$};
\draw (321,284) node [anchor=north west][inner sep=0.75pt]    {$2$};
\draw (404.78,147) node [anchor=north west][inner sep=0.75pt]    {$3$};
\draw (115,189) node [anchor=north west][inner sep=0.75pt]    {$4$};
\draw (315.78,144) node [anchor=north west][inner sep=0.75pt]    {$12$};
\draw (345.78,73) node [anchor=north west][inner sep=0.75pt]    {$13$};
\draw (369.78,212) node [anchor=north west][inner sep=0.75pt]    {$23$};
\draw (211.78,245) node [anchor=north west][inner sep=0.75pt]    {$24$};
\draw (194.78,91) node [anchor=north west][inner sep=0.75pt]    {$14$};
\draw (230,161) node [anchor=north west][inner sep=0.75pt]    {$34$};

\end{tikzpicture}
    \caption{The admissible graph $G_{\sigma} = K_{2,2}$ for $\sigma = \{(1,2),(2,3),(3,4),(1,4)\}$.}
    \label{fig:Admissible_graph_ex}
\end{figure}

\begin{remark}
Geometrically, the duality $G_{\sigma} \mapsto G(P_{\sigma}) = L(G_{\sigma})$ shares a key feature with \emph{medial graphs}: vertices of the resulting graph correspond to edges of the original, and adjacency reflects shared endpoints. However, our setting is not restricted to planar embeddings and works for arbitrary spanning subgraphs of the complete graph, viewed on the $1$-skeleton of a simplex.
\end{remark}

\begin{remark}
    For general $k \ge 2$, consider the embedding $\Delta_{n,k} \hookrightarrow k\Delta^{n-1}$. 
    In this embedding, the vertex $\sum_{i\in I} \bm e_i$ lies at the centroid 
    $\frac{1}{k}\sum_{i\in I} k \bm e_i$ of the $(k-1)$-dimensional face of $k\Delta^{n-1}$ spanned by 
    the vertices $\{k \bm e_i\}_{i\in I}$. This leads to the following correspondence:
    $$
    \{\text{subgraphs of } J(n,k)\} \longleftrightarrow \{(k-1)\text{-chains in } k\Delta^{n-1}\}.
    $$
\end{remark}

Now we present a complete combinatorial classification of admissible graphs for the standard $T^n$-action on $G_{n,2}$. Since an admissible graph $G_{\sigma}$ is uniquely determined by its corresponding admissible set $\sigma \subseteq \binom{[n]}{2}$, this classification also yields a complete description of all admissible sets.

\begin{theorem}[Combinatorial classification of admissible graphs and sets]\label{classification}
A subset $\sigma \subseteq \binom{[n]}{2}$ is admissible for the $T^n$-action on $G_{n,2}$ if and only if there exists a partition
$$
[n] = A_1 \sqcup \cdots \sqcup A_N \sqcup B \qquad (N \ge 2,\ A_i \neq \varnothing)
$$
such that $\sigma$ consists precisely of all pairs $(i,j)$ with $i$ and $j$ belonging to different parts $A_p, A_q$ ($p \neq q$), and conversely, for every $i \in A_p$ and $j \in A_q$ we have $(i,j) \in \sigma$.

Equivalently, the corresponding admissible graph $G_{\sigma}$ is the disjoint union of the set $B$ of isolated vertices and the complete $N$-partite graph $K(A_1,\dots,A_N)$.
\end{theorem}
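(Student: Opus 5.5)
Identify a 2-plane $L\in G_{n,2}$ with the row space of a $2\times n$ matrix of rank $2$ whose columns are $v_1,\dots,v_n\in\mathbb{C}^2$. Then $P^{ij}(L)=\det(v_i,v_j)$. The plan is to read off the vanishing pattern of the Plücker coordinates from the projective configuration of the columns. Let $B=\{i\mid v_i=0\}$. On the remaining indices, define $i\sim j$ if and only if $v_i$ and $v_j$ are proportional, i.e.\ they give the same point $[v_i]=[v_j]\in\mathbb{C}P^1$. This is an equivalence relation on $[n]\setminus B$, and we call its classes $A_1,\dots,A_N$.

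For $i,j\notin B$ we have $\det(v_i,v_j)\neq 0$ if and only if $[v_i]\ne[v_j]$. For $i\in B$ every $P^{ij}$ vanishes. Hence
\[
\sigma(L)=\{(i,j)\mid i,j \text{ lie in different classes } A_p\neq A_q\},
\]
which means $G_\sigma$ is $K(A_1,\dots,A_N)$ together with the isolated vertices $B$. Moreover, $N\ge 2$ because the rank is $2$: if all nonzero columns were proportional, the matrix would have rank at most $1$. This proves the ``only if'' direction, since for $\sigma$ admissible we may take any $L\in W_\sigma$.

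For the converse, start from a partition $[n]=A_1\sqcup\dots\sqcup A_N\sqcup B$ with $N\ge 2$ and all $A_p$ nonempty. Choose pairwise distinct points $c_1,\dots,c_N\in\mathbb{C}P^1$; this is possible because $\mathbb{C}P^1$ is infinite. Set $v_i=\hat c_p$ (a fixed lift of $c_p$) for $i\in A_p$, and $v_i=0$ for $i\in B$. Since $N\ge 2$, two columns are independent, so the matrix has rank $2$ and defines some $L\in G_{n,2}$. By the computation above, $\{(i,j)\mid P^{ij}(L)\neq 0\}$ is exactly $\sigma$, so $W_\sigma\neq\varnothing$ and $\sigma$ is admissible.

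The equivalent graph formulation is then a restatement. There is no serious obstacle. The only points needing care are the rank condition, which forces $N\ge2$ and rules out a single part (the empty $\sigma$), and checking that the partition is recovered uniquely from $\sigma$ whenever $\sigma\neq\varnothing$. For the latter, $B$ is the set of isolated vertices, and the $A_p$ are the classes of the relation ``non-adjacent'' on the remaining vertices, which is an equivalence relation precisely because the graph is complete multipartite.
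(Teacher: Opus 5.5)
Your proof is correct and follows the paper's argument: zero vectors give $B$, proportionality classes of the nonzero vectors give the parts $A_1,\dots,A_N$, and the rank-$2$ condition forces $N\ge 2$. You also write out the converse explicitly by placing the parts at distinct points of $\mathbb{C}P^1$, a direction the paper's proof leaves implicit in its remark about the ``only global constraint''.
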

\begin{proof}
Let $\sigma \subseteq \binom{[n]}{2}$ be an admissible set for the $T^n$-action on $G_{n,2}$. 
Consider a $2$-plane $L \in W_{\sigma}$, represented by an $n \times 2$ matrix $X$ modulo right $GL(2,\mathbb{C})$ action. 
Interpreting its rows as vectors $v_{1},\ldots, v_{n} \in \mathbb{C}^2$, we have $(i,j) \in \sigma$ 
if and only if $v_i$ and $v_j$ are non-proportional non-zero vectors. The only global constraint on $X$ is that 
the vectors $v_1,\ldots,v_n$ must span $\mathbb{C}^2$, which is equivalent to requiring that at least two of them are non-zero and non-proportional. 

Some of the vectors $v_{1},\ldots, v_{n}$ may be zero; the corresponding indices then correspond to isolated vertices in the admissible graph $G_{\sigma}$. For the non-zero vectors we obtain a partition $A_{1}\sqcup \ldots \sqcup A_{N}$, where $i$ and $j$ belong to the same part if and only if the vectors $v_{i}$ and $v_{j}$ are proportional to each other. Let $B \subset [n]$ be the set of indices corresponding to zero vectors. 

Therefore, $[n] = A_{1}\sqcup \ldots \sqcup A_{N} \sqcup B$, and $G_{\sigma}$ is a disjoint union of $|B|$ isolated vertices together with a complete $N$-partite graph $K(A_1, \ldots, A_N)$.
\end{proof}

Now we compute the dimension of an admissible polytope $P_\sigma$ in terms of the corresponding admissible graph $G_\sigma = \big( \sqcup_{i \in B} \{i\} \big) \sqcup K(A_1,\ldots, A_N)$.

\begin{theorem}\label{dimensionadmissiblepolytope}
    The dimension of the admissible polytope $P_{\sigma}$ is given by
    \begin{equation}
        \dim P_{\sigma} = 
    \begin{cases}
    n - |B| - 2, & \text{if } N = 2,\\[4pt]
    n - |B| - 1, & \text{if } N \ge 3.
    \end{cases}
    \end{equation}
\end{theorem}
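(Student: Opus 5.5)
The plan is to compute $\dim P_\sigma$ as the dimension of the affine hull of its vertices $\bm e^i+\bm e^j$, $(i,j)\in\sigma$. By Theorem~\ref{classification}, $\sigma$ is exactly the edge set of $K(A_1,\dots,A_N)$ on the vertex set $S=[n]\setminus B$, with $|S|=n-|B|$. All vertices lie in the hyperplane $x_1+\dots+x_n=2$ and in the coordinate subspace $x_b=0$ for $b\in B$. Hence $\dim P_\sigma$ equals the rank of the linear span of $\{\bm e^i+\bm e^j : (i,j)\in\sigma\}$ minus one. This rank is the rank of the unsigned incidence matrix of the graph $K(A_1,\dots,A_N)$ on $|S|$ vertices. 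So the statement reduces to the classical fact that, for a connected graph on $m$ vertices, this rank is $m-1$ if the graph is bipartite and $m$ otherwise.

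\textbf{Case $N=2$.} The graph $K(A_1,A_2)$ is connected and bipartite. Every vertex satisfies the extra linear relation $\sum_{i\in A_1}x_i=\sum_{j\in A_2}x_j=1$. Therefore the span of the vectors lies in the $(|S|-1)$-dimensional subspace $\{\sum_{A_1}x=\sum_{A_2}x\}\cap\{x_b=0,\ b\in B\}$. For the reverse inequality I will show that this whole subspace is spanned. Fix $a\in A_1$ and $c\in A_2$. The differences $(\bm e^a+\bm e^{j})-(\bm e^a+\bm e^{c})=\bm e^j-\bm e^c$ for $j\in A_2$, and similarly $\bm e^i-\bm e^a$ for $i\in A_1$, together with $\bm e^a+\bm e^c$, give $|A_1|-1+|A_2|-1+1=|S|-1$ independent vectors. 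The rank is thus $|S|-1$, and $\dim P_\sigma=n-|B|-2$.

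\textbf{Case $N\ge 3$.} Choose $a\in A_1$, $b\in A_2$, $c\in A_3$. From $\bm e^a+\bm e^b$, $\bm e^a+\bm e^c$, $\bm e^b+\bm e^c$ (an odd triangle) we recover each of $\bm e^a,\bm e^b,\bm e^c$ individually. Any other $i\in S$ lies in some part and is adjacent to at least one of $a,b,c$, say to $v$; subtracting $\bm e^v$ from $\bm e^i+\bm e^v$ gives $\bm e^i$. So the span is all of $\mathbb{R}^{S}$, the rank is $|S|$, and $\dim P_\sigma=n-|B|-1$.

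There is no serious obstacle. The only step requiring care is the reduction from affine dimension to linear rank, which holds because all points lie on the affine hyperplane $\sum x_i=2$ not through the origin. The rest is the explicit spanning argument in each case, including the check in the bipartite case that the one relation found is the only one.
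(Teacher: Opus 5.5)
Your proof is correct, but it follows a different route from the paper. The paper never computes the affine hull directly. It uses Gel'fand--Serganova to realize $P_\sigma$ as the moment polytope of an orbit closure $\overline{\mathcal{O}_{\mathbb{C}}(L)}$ with $L\in W_\sigma$, and then writes $\dim P_\sigma=\dim_{\mathbb{C}}\mathcal{O}_{\mathbb{C}}(L)=n-\dim_{\mathbb{C}}\operatorname{Stab}(L)$. Finally it solves $t_it_j=\mathrm{const}$ over $(i,j)\in\sigma$: the $t_i$ are constant on each part, two free values remain when $N=2$, a triangle forces them to coincide when $N\ge 3$, and the coordinates indexed by $B$ are free.

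Your computation is the dual of this one. The Lie algebra of $\operatorname{Stab}(L)$ is $\{\xi:\ \xi_i+\xi_j \text{ is the same for all } (i,j)\in\sigma\}$, which is exactly the annihilator of the direction space of $\operatorname{aff}P_\sigma$. So where the paper counts free parameters, you count independent vectors.

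What your route buys:
\begin{enumerate}
\item It is elementary and self-contained. It needs neither the identification of $P_\sigma$ with the moment polytope of an orbit closure nor the equality of polytope dimension with orbit dimension.
\item It never uses admissibility, so it extends verbatim to arbitrary nonempty $\sigma$: $\dim P_\sigma=n-1-b(G_\sigma)$, where $b(G_\sigma)$ is the number of bipartite connected components of $G_\sigma$, isolated vertices included. This is precisely the count behind the description of $\widetilde{\mathcal{A}}_n$ in the section on the $T^n$-action on $\mathbb{C}P^{N_2}$.
\end{enumerate}
What the paper's route buys is a direct link to the toric variety $\overline{\mathcal{O}_{\mathbb{C}}(L)}$ and its stabilizer. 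That viewpoint is reused in the later limit arguments describing the $(n-2)$- and $(n-1)$-dimensional admissible polytopes.
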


\begin{proof}
According to Gel'fand and Serganova \cite{GelfandSerganova}, admissible polytopes are the moment polytopes of (possibly singular) toric varieties arising as closures $\overline{\mathcal{O}_{\mathbb{C}}(L)}$ of $(\mathbb{C}^*)^n$-orbits $\mathcal{O}_{\mathbb{C}}(L)$ under the standard action of the algebraic torus $(\mathbb{C}^*)^n$ on $G_{n,2}$. Thus, for $P_\sigma$ there exists an $L \in G_{n,2}$ such that $P_\sigma = \mu\big{(} \overline{\mathcal{O}_{\mathbb{C}}(L)} \big{)}$. Consequently,
$$
\dim P_\sigma = \dim_{\mathbb{C}}\overline{\mathcal{O}_{\mathbb{C}}(L)} = \dim_{\mathbb{C}} \mathcal{O}_{\mathbb{C}}(L).
$$

As is well known, the dimension of the orbit $\mathcal{O}_{\mathbb{C}}(L)$ equals the codimension of the stabilizer $\operatorname{Stab}(L)$ in $(\mathbb{C}^*)^n$. 
Let $t = (t_1,\ldots, t_n) \in (\mathbb{C}^*)^n$ and let $L$ be represented by an $n \times 2$ matrix $X$ with rows $v_1,\ldots, v_n \in \mathbb{C}^2$, 
where at least two of them are nonzero and non‑proportional. 
Recall that the Pl\"ucker coordinate $P^{ij}(L)$ equals the $2 \times 2$ minor formed by rows $i$ and $j$ of $X$, i.e., $\det(v_i, v_j)$. 
Under the torus action, $t \cdot L$ corresponds to multiplying each row $v_i$ by $t_i$, hence
$$
P^{ij}(t \cdot L) = t_i t_j P^{ij}(L).
$$
Therefore, $t \in \operatorname{Stab}(L)$ if and only if  
$$
t_i t_j = t_{i_0} t_{j_0} \quad \text{for all } (i,j) \in \sigma,
$$
where $(i_0,j_0)$ is any fixed pair in $\sigma$, since all non‑zero Pl\"ucker coordinates of $L$ must be preserved up to a common scalar factor.

As we know from Theorem~\ref{classification}, $G_{\sigma}$ consists of isolated vertices $B$ together with a complete $N$-partite graph $K(A_1,\dots,A_N)$ for some $N \ge 2$. 
The isolated vertices do not affect the stabilizer, as they are incident to no edges. Now consider the $N$-partite connected component $K(A_1,\dots,A_N)$. Observe that if edges $(i_1,i_2)$ and $(i_2,i_3)$ share a vertex $i_2$, then from $t_{i_1} t_{i_2} = t_{i_2} t_{i_3}$ we obtain $t_{i_1} = t_{i_3}$. 
Let $A$ and $A'$ be two distinct parts. For any vertices $i_1, i_2 \in A$ and $i' \in A'$, the path $i_1, i', i_2$ yields $t_{i_1} = t_{i_2}$. 
Consequently, all vertices in a given part $A_j$ must have the same $t$-value; denote it by $\tau_j$ ($j = 1,\dots,N$). 

We now distinguish two cases:

\begin{itemize}
    \item \emph{Case $N = 2$:} There are two independent values $\tau_1$ and $\tau_2$. 
    The isolated vertices contribute $|B|$ free parameters. Hence 
    $$
    \dim_{\mathbb{C}} \operatorname{Stab}(L) = |B| + 2,
    $$
    and therefore
    $$
    \dim_{\mathbb{C}} \mathcal{O}_{\mathbb{C}}(L) = n - \dim_{\mathbb{C}} \operatorname{Stab}(L) = n - |B| - 2.
    $$

    \item \emph{Case $N \ge 3$:} For any three distinct parts, take vertices $i_1 \in A_a$, $i_2 \in A_b$, $i_3 \in A_c$. Then the cycle $i_1, i_2, i_3$ (using edges $\{i_1,i_2\}$, $\{i_2,i_3\}$, $\{i_3,i_1\}$) gives $\tau_a = \tau_b = \tau_c$. 
    Hence all $\tau_j$ coincide. 
    Thus the stabilizer is determined by this common value together with the $|B|$ free parameters from the isolated vertices, yielding
    $$
    \dim_{\mathbb{C}} \operatorname{Stab}(L) = |B| + 1,
    $$
    and consequently
    $$
    \dim_{\mathbb{C}} \mathcal{O}_{\mathbb{C}}(L) = n - |B| - 1.
    $$
\end{itemize}
\end{proof}

\subsection{Chamber decomposition of $\Delta_{n,2}$ for $T^n$-action on $G_{n,2}$}

By definition the \emph{chambers} $C_{\omega}$ of the hypersimplex $\Delta_{n,2}$, where $\omega$ is a collection of admissible sets, are obtained as intersections of the relative interiors of admissible polytopes, namely 
$$
C_{\omega} := \bigcap_{\sigma \in \omega} \mathring{P}_{\sigma} \quad \text{with } C_{\omega} \neq \varnothing,\quad \text{and}\quad C_{\omega} \cap \mathring{P}_{\sigma} = \varnothing \text{ for all } \sigma \notin \omega.
$$

We start by describing the boundary of $\Delta_{n,2}$. The hypersimplex $\Delta_{n,2}$ is an $(n-1)$-dimensional convex polytope, namely the convex hull of the vertices $\bm e_i + \bm e_j$ for all $1 \le i < j \le n$. In other words,
$$\Delta_{n,2} = I^n \cap \left\{ (x_1,\ldots, x_n) \in \mathbb{R}^n \;\middle|\; x_1 + \cdots + x_n = 2 \right\}.$$
Therefore,
$$\partial \Delta_{n,2} = \partial I^n \cap \left\{ (x_1,\ldots, x_n) \in \mathbb{R}^n \;\middle|\; x_1 + \cdots + x_n = 2 \right\}.$$
The boundary $\partial I^n$ consists of all points in $I^n$ where at least one coordinate is either $0$ or $1$. Intersecting with the hyperplane $x_1 + \cdots + x_n = 2$ yields $2n$ admissible $(n-2)$-dimensional polytopes:
\begin{itemize}
    \item For each $i$, setting $x_i = 0$ gives a hypersimplex $\Delta_{n-1,2}(i)$ of dimension $n-2$.
    \item For each $i$, setting $x_i = 1$ gives a simplex $\Delta_{n-1,1}(i)$ of dimension $n-2$.
\end{itemize}
Thus, $\partial \Delta_{n,2}$ consists of $n$ such hypersimplices and $n$ such simplices. Taking into account the hyperplane $x_1 + \cdots + x_n = 2$, the equations $x_i = 1$ can be rewritten as $\sum_{j \neq i} x_j = x_i$.

An important specific feature of admissible polytopes is the following.
\begin{theorem}\label{boundary_admissible}
    Let $P_\sigma$ be an admissible polytope of dimension $\dim P_\sigma \le n-3$. Then $P_\sigma$ is contained in the boundary $\partial \Delta_{n,2}$ for $\Delta_{n,2}$.
\end{theorem}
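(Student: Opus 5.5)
By Theorem~\ref{classification} we may write $G_\sigma = \big(\sqcup_{i\in B}\{i\}\big)\sqcup K(A_1,\dots,A_N)$ with $N\ge 2$. Theorem~\ref{dimensionadmissiblepolytope} then shows that $\dim P_\sigma \le n-3$ forces one of two situations:
\begin{itemize}
\item $B\neq\varnothing$, since $n-|B|-2\le n-3$ or $n-|B|-1\le n-3$ require $|B|\ge 1$ or $|B|\ge 2$ respectively;
\item $B=\varnothing$ and $N=2$, with $\dim P_\sigma = n-2$ in that case.
\end{itemize}
The second alternative gives dimension $n-2>n-3$, so it is excluded. Hence any admissible $P_\sigma$ with $\dim P_\sigma\le n-3$ has $B\neq\varnothing$.

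Now fix $i\in B$. Vertex $i$ is isolated in $G_\sigma$, so no pair in $\sigma$ contains $i$. Therefore every vertex $\bm e^k+\bm e^l$ of $P_\sigma$ has $i$-th coordinate $0$. Convexity then puts all of $P_\sigma$ inside the facet $\{x_i=0\}\cap\Delta_{n,2}=\Delta_{n-1,2}(i)$, which lies in $\partial\Delta_{n,2}$ by the description of the boundary given above.

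The only delicate point is confirming that the facet description of $\partial\Delta_{n,2}$ really applies, namely that $\{x_i=0\}$ meets $\Delta_{n,2}$ in a proper face. This is immediate because $x_i\ge 0$ holds on $\Delta_{n,2}$. Everything else is bookkeeping from Theorems~\ref{classification} and~\ref{dimensionadmissiblepolytope}. For robustness, I would also note the contrapositive: an admissible polytope not contained in any facet $x_i=0$ has $B=\varnothing$ and so has dimension at least $n-2$.
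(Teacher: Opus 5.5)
Your proof is correct and follows the paper's argument. Theorem~\ref{dimensionadmissiblepolytope} forces an isolated vertex $i\in B$, so no vertex of $P_\sigma$ (equivalently, no nonzero Pl\"ucker coordinate $P^{kl}(L)$) involves $i$, giving $P_\sigma\subseteq\{x_i=0\}\cap\Delta_{n,2}\subseteq\partial\Delta_{n,2}$. You also use the implication in the right direction, $B=\varnothing\Rightarrow\dim P_\sigma\ge n-2$, whereas the paper's wording ``$\dim P_\sigma\le n-3$ whenever $|B|\ge 1$'' is the converse and is false as stated (for instance $|B|=1$, $N\ge 3$ gives dimension $n-2$).
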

\begin{proof}
    By Theorem~\ref{dimensionadmissiblepolytope}, we have $\dim P_\sigma \le n-3$ whenever $|B| \ge 1$, where $B$ is the set of isolated vertices in the admissible graph $G_\sigma$. Let $P_\sigma$ be the moment polytope of the toric variety $\overline{\mathcal{O}_{\mathbb{C}}(L)}$ for some $L \in G_{n,2}$. Then $|B| \ge 1$ implies that there exists an index $m$ with $1 \le m \le n$ such that $P^{ij}(L) = 0$ for all $(i,j) \in \binom{[n]}{2}$ containing $m$. Consequently, $P_\sigma \subseteq \{ x_m = 0 \}$.
\end{proof}

Let $\mathcal{A}_n$ denote the hyperplane arrangement in $\mathbb{R}^n$ consisting of hyperplanes that pass through admissible polytopes $P_\sigma$ of codimension one in $\Delta_{n,2}$, i.e., of dimension $n-2$. Recall that we have a projection map (\ref{projectionmap}) which maps the simplex $\Delta^N \subset \mathbb{R}^{\binom{n}{2}}$ onto the hypersimplex $\Delta_{n,2} \subset \mathbb{R}^n$, and it sends the vertices of $\Delta^N$ bijectively onto the vertices of $\Delta_{n,2}$. We will find $\mathcal{A}_n$ using the dual monomorphism $\Gamma^* \colon (\mathbb{R}^n)^* \to \big( \mathbb{R}^{\binom{n}{2}} \big)^*$:

\begin{itemize}
    \item For a linear functional $\xi \in (\mathbb{R}^n)^*$ with coordinates $(\xi_1,\ldots, \xi_n)$, we have
\begin{equation}\label{gamma*}
    \Gamma^*(\xi_1,\ldots, \xi_n) = (\xi_1 + \xi_2,\; \ldots,\; \xi_i + \xi_j,\; \ldots,\; \xi_{n-1} + \xi_n)_{1 \le i < j \le n}.
\end{equation}

    \item Let $H \in \mathcal{A}_n$ be a hyperplane supporting $P_\sigma$, where $\dim P_\sigma = n-2$, and let $H$ be given by 
    $$H_{\xi} = \{ \bm x \in \mathbb{R}^n \mid \xi(\bm x) = 0 \}.$$
    We can lift it via $\Gamma$, i.e., consider $\Gamma^* \xi \in \big( \mathbb{R}^{\binom{n}{2}} \big)^*$. Since $\Gamma$ maps the vertices of $\Delta^N$ bijectively onto the vertices of $\Delta_{n,2}$, we conclude that 
    $$H_{\Gamma^* \xi} = \{ \bm y \in \mathbb{R}^{\binom{n}{2}} \mid \Gamma^* \xi(\bm y) = \xi(\Gamma \bm y) = 0 \}$$
    is a supporting hyperplane for a face $Q_{\sigma}$ of $\Delta^{N}$ given by
    $$Q_{\sigma} = \Delta^N \cap \{ (y_{12},\ldots, y_{n-1,n}) \in \mathbb{R}^{\binom{n}{2}} \mid y_{ij} = 0 \text{ for } (i,j) \in \sigma \},$$
    which is the intersection of $\Delta^N$ with the coordinate subspace where the $(i,j)$-th coordinate is zero for $(i,j) \in \sigma$. This face is not necessarily a facet.
    
    \item The hyperplane $H_{\Gamma^* \xi}$ is given by the linear functional $\Gamma^* \xi$. Therefore, by (\ref{gamma*}), we have $\xi_i + \xi_j = 0$ for all $(i,j) \in \sigma$.

    \item The dimension of $P_\sigma$ equals $n-2$. Hence, the linear functional $\xi$ is determined uniquely up to scaling. Thus, we need to find all admissible sets $\sigma$ for which the system of linear equations $\xi_i + \xi_j = 0$ for $(i,j) \in \sigma$ has a \emph{one-dimensional} solution space.

    \item By the classification theorem (Theorem~\ref{classification}), the admissible graph $G_{\sigma}$ is the disjoint union of a complete $N$-partite graph $K(A_1,\ldots, A_N)$ together with a set $B$ of isolated vertices. The edges of $G_\sigma$ correspond to the equations $\xi_i + \xi_j = 0$ for $(i,j) \in \sigma$. 

    \item Observe that the solution space is one-dimensional only in the following cases: $|B| = 1$ and $N = n-1$, or $|B| = 0$ and $N = 2$. The first case corresponds to the hyperplanes $x_i = 0$, which cut out the boundary hypersimplices $\Delta_{n-1,2}(i)$. The second case includes the hyperplanes $\sum_{j \neq i} x_j = x_i$, which cut out the boundary simplices $\Delta_{n-1,1}(i)$, as well as hyperplanes that intersect the interior of $\Delta_{n,2}$, given by equations of the form
    $$x_{i_1} + \cdots + x_{i_p} - x_{j_1} - \cdots - x_{j_q} = 0, \quad \text{where } p+q = n,\;\; p,q>1$$
\end{itemize}
We have shown that $\mathcal{A}_n$ consists of the following vector hyperplanes
\begin{equation}
    x_i = 0, \quad x_i - \sum_{j\ne i}x_j=0, \quad \sum^{p}_{k=1} x_{i_k} - \sum^{q}_{m=1} x_{j_m} = 0, \quad \text{where } p+q = n,\;\; p,q > 1.
\end{equation}

\begin{theorem}
    Admissible polytopes of dimension $n-2$ that are not contained in the boundary $\partial \Delta_{n,2}$ coincide with the intersections of the hypersimplex $\Delta_{n,2}$ with hyperplanes from $\mathcal{A}_n$ of the form
    \begin{equation}\label{H_p,q}
        H_{p,q}\;\colon \quad\sum^{p}_{k=1} x_{i_k} - \sum^{q}_{m=1} x_{j_m} = 0, \quad \text{where } p+q = n,\;\; p,q > 1.
    \end{equation}
\end{theorem}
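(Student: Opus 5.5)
We prove both inclusions, relying on the classification in Theorem~\ref{classification} and the dimension formula of Theorem~\ref{dimensionadmissiblepolytope}.

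\emph{Admissible polytopes lie on hyperplanes $H_{p,q}$.} Let $P_\sigma$ be admissible with $\dim P_\sigma = n-2$ and $P_\sigma \not\subseteq \partial\Delta_{n,2}$. By the proof of Theorem~\ref{boundary_admissible}, $|B|\ge 1$ forces $P_\sigma \subseteq \{x_m=0\}$, so $B=\varnothing$. Then Theorem~\ref{dimensionadmissiblepolytope} gives $N=2$, since $N\ge 3$ would give dimension $n-1$. Hence $G_\sigma = K(A_1,A_2)$ with $A_1\sqcup A_2=[n]$. Every vertex $\bm e^i+\bm e^j$ with $i\in A_1$, $j\in A_2$ satisfies
$$\sum_{k\in A_1}x_k-\sum_{m\in A_2}x_m = 1-1 = 0,$$
so $P_\sigma$ lies in this hyperplane. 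If $|A_1|=1$, say $A_1=\{i\}$, this is $x_i=\sum_{j\ne i}x_j$, i.e. $x_i=1$, a boundary facet. That is excluded, so $p=|A_1|>1$ and likewise $q=|A_2|>1$, and the hyperplane is some $H_{p,q}$.

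\emph{Equality $P_\sigma = H_{p,q}\cap\Delta_{n,2}$.} Take $H=H_{p,q}$ with parts $A_1=\{i_1,\dots,i_p\}$ and $A_2=\{j_1,\dots,j_q\}$, and set $\sigma=\{(i,j): i\in A_1,\ j\in A_2\}$. This $\sigma$ is admissible by Theorem~\ref{classification}, and $P_\sigma\subseteq H\cap\Delta_{n,2}$ by the computation above. For the reverse inclusion, evaluate the linear functional $\ell=\sum_{A_1}x_k-\sum_{A_2}x_m$ on the vertices of $\Delta_{n,2}$:
\begin{itemize}
\item $\ell=2$ on $\bm e^i+\bm e^{i'}$ with $i,i'\in A_1$;
\item $\ell=-2$ on $\bm e^j+\bm e^{j'}$ with $j,j'\in A_2$;
\item $\ell=0$ on the vertices indexed by $\sigma$.
\end{itemize}
The key step is the general fact that if a hyperplane $\{\ell=0\}$ meets a polytope, then the slice is the convex hull of the vertices lying on $\{\ell=0\}$ together with the points where edges joining vertices with $\ell>0$ to vertices with $\ell<0$ cross it. An edge of $\Delta_{n,2}$ joins two pairs sharing an index, so it never joins a pair inside $A_1$ to a pair inside $A_2$. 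Hence no edge crosses $H$ transversally, and $H\cap\Delta_{n,2}=\operatorname{conv}\{\text{vertices on }H\}=P_\sigma$.

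\emph{Dimension and interior.} Theorem~\ref{dimensionadmissiblepolytope} with $N=2$, $B=\varnothing$ gives $\dim P_\sigma=n-2$. The polytope $P_\sigma$ is not in the boundary: the point $x_k=1/p$ for $k\in A_1$, $x_m=1/q$ for $m\in A_2$ lies in $H$. Since $p,q>1$, all its coordinates lie strictly between $0$ and $1$, so it is in the interior of $\Delta_{n,2}$. It lies in $P_\sigma$ because it equals $\frac{1}{pq}\sum_{\sigma}(\bm e^i+\bm e^j)$.

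The main obstacle is the slicing step, namely justifying that $H\cap\Delta_{n,2}$ has no vertices other than those of $P_\sigma$. The plan handles it with the edge-crossing argument above. An alternative is to use the description $\Delta_{n,2}=I^n\cap\{\sum x_i=2\}$: on $H$ one gets $\sum_{A_1}x_k=\sum_{A_2}x_m=1$, so $H\cap\Delta_{n,2}$ is the product of two simplices $\Delta^{p-1}\times\Delta^{q-1}$, whose vertices are exactly $\bm e^i+\bm e^j$ with $i\in A_1$, $j\in A_2$.
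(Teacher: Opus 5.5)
Your proof is correct, and it takes a more direct route than the paper.

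The paper derives the statement from its construction of $\mathcal{A}_n$. A functional $\xi$ vanishing on $P_\sigma$ is lifted by $\Gamma^*$, which forces $\xi_i+\xi_j=0$ for all $(i,j)\in\sigma$. The classification of admissible graphs then shows that this system has a one-dimensional solution space only in two cases: $|B|=1$, $N=n-1$, which gives $x_i=0$; or $|B|=0$, $N=2$, which gives $x_i=\sum_{j\neq i}x_j$ and the $H_{p,q}$. The proof of the theorem itself only observes that $H_{p,q}$ cuts out the polytope of $K_{p,q}$ and that no proper subgraph has the same dimension.

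You instead force $B=\varnothing$ and $N=2$ from Theorems~\ref{boundary_admissible} and~\ref{dimensionadmissiblepolytope}. You then write down the functional $\ell$ explicitly and rule out $p=1$ or $q=1$ via the facets $x_i=1$. Most importantly, you prove the set equality $H_{p,q}\cap\Delta_{n,2}=P_\sigma$: no edge of the Johnson graph joins a pair inside $A_1$ to a pair inside $A_2$, or alternatively the cube description yields $\Delta^{p-1}\times\Delta^{q-1}$. You also give the explicit interior point $\frac{1}{pq}\sum_{(i,j)\in\sigma}(\bm e^i+\bm e^j)$.

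These last two facts are exactly what the paper's short proof takes for granted. So your argument is more complete for this particular statement, and its cube variant recovers the combinatorial type in Theorem~\ref{(n-2)-dim_adm} at no extra cost.

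What the paper's linear-algebra framing buys is uniformity. A single computation produces the entire arrangement $\mathcal{A}_n$, boundary hyperplanes included. The same computation also carries over unchanged to arbitrary, non-admissible $\sigma$ in the next section, where it yields $\widetilde{\mathcal{A}}_n$.
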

\begin{proof}
   These hyperplanes cut out $(n-2)$-dimensional admissible polytopes whose admissible graph is a complete bipartite graph $K_{p,q}$ with $p+q = n$, $p,q > 1$. No proper subgraph of such a graph corresponds to an admissible polytope of the same dimension. Consequently, by definition of $\mathcal{A}_n$, these hyperplanes yield all $(n-2)$-dimensional admissible polytopes that are not contained in the boundary $\partial \Delta_{n,2}$.
\end{proof}

\begin{remark}
    The intersection of a hyperplane $H_{p,q}$ from the arrangement $\mathcal{A}_n$ with the affine plane $R^{n-1} = \{ x_1 + \ldots + x_n = 2\}$ yields an affine hyperplane $x_{i_1} + \ldots + x_{i_p} = 1$. Thus, we obtain the hyperplane arrangement of Buchstaber and Terzi\'c from \cite{BT3}.
\end{remark}

The admissible polytopes from the last theorem admit an explicit combinatorial description.
\begin{theorem}\label{(n-2)-dim_adm}
    Let $P_\sigma$ be an admissible polytope of dimension $n-2$ that intersects the interior of $\Delta_{n,2}$ and is cut out by a hyperplane $H_{p,q}$ with $p+q = n$, $p,q > 1$. Then $P_\sigma$ is combinatorially equivalent to $\Delta^{p-1} \times \Delta^{q-1}$, and the closure of the $(\mathbb{C}^*)^n$-orbit of a point $L \in W_\sigma$ is diffeomorphic to $\mathbb{C}P^{p-1} \times \mathbb{C}P^{q-1}$.
\end{theorem}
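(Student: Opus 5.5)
By the preceding theorem, $P_\sigma$ is cut out by $H_{p,q}$ and its admissible graph is the complete bipartite graph $K_{p,q}$. Its parts are $A_1=\{i_1,\dots,i_p\}$ and $A_2=\{j_1,\dots,j_q\}$, and $B=\varnothing$. Hence $P_\sigma=\operatorname{conv}\{\bm e^{a}+\bm e^{b}\mid a\in A_1,\ b\in A_2\}$.

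\emph{Polytope.} The plan is to identify this vertex set directly with a product of simplices. Consider the map
$$\Delta^{p-1}\times\Delta^{q-1}\to\mathbb{R}^n,\qquad (\bm u,\bm v)\mapsto (\bm u,\bm v),$$
where $\Delta^{p-1}$ is the standard simplex in the coordinates indexed by $A_1$ and $\Delta^{q-1}$ the one in the coordinates indexed by $A_2$. This map is affine and injective, since the coordinate blocks are disjoint. It sends the vertex pair $(\bm e^a,\bm e^b)$ to $\bm e^a+\bm e^b$. The image of a convex hull under an affine map is the convex hull of the images, so its image is exactly $P_\sigma$. Thus $P_\sigma$ is affinely isomorphic, in particular combinatorially equivalent, to $\Delta^{p-1}\times\Delta^{q-1}$. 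Its dimension is $(p-1)+(q-1)=n-2$, consistent with Theorem~\ref{dimensionadmissiblepolytope}. The image also visibly lies in $\{\sum_{A_1}x=1,\ \sum_{A_2}x=1\}$, i.e.\ in $H_{p,q}\cap\{x_1+\dots+x_n=2\}$.

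\emph{Orbit closure.} Take $L\in W_\sigma$ with rows $v_1,\dots,v_n\in\mathbb{C}^2$ as in the proof of Theorem~\ref{classification}. The rows indexed by $A_1$ are all proportional to one nonzero vector $w_1$, and those indexed by $A_2$ are proportional to a non-proportional vector $w_2$. Using the right $GL(2,\mathbb{C})$-action, we take $w_1=(1,0)$ and $w_2=(0,1)$. Then $X$ has rows $\lambda_a(1,0)$ for $a\in A_1$ and $\lambda_b(0,1)$ for $b\in A_2$, with all $\lambda$ nonzero. The torus rescales the $\lambda$'s independently, so the orbit consists of all $2$-planes spanned by the two columns
$$(\lambda_a)_{a\in A_1}\oplus 0 \qquad\text{and}\qquad 0\oplus(\lambda_b)_{b\in A_2}$$
with all entries nonzero. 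In other words, the orbit is the set of planes $\ell_1\oplus\ell_2$ with $\ell_1\subset\mathbb{C}^{A_1}$ and $\ell_2\subset\mathbb{C}^{A_2}$ lines having all coordinates nonzero. It is therefore the open dense torus $(\mathbb{C}^*)^{p-1}\times(\mathbb{C}^*)^{q-1}$ of $\mathbb{C}P^{p-1}\times\mathbb{C}P^{q-1}$, embedded in $G_{n,2}$ by
$$\iota\colon \mathbb{C}P^{p-1}\times\mathbb{C}P^{q-1}\to G_{n,2},\qquad (\ell_1,\ell_2)\mapsto \ell_1\oplus\ell_2 .$$
The map $\iota$ is a closed embedding. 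It is injective because $\ell_1=(\ell_1\oplus\ell_2)\cap\mathbb{C}^{A_1}$ can be recovered from the plane, and likewise $\ell_2$. In Plücker coordinates it is the Segre map: $P^{ab}=\lambda_a\mu_b$ for $a\in A_1$, $b\in A_2$, and all other Plücker coordinates vanish. The Segre map is a smooth embedding, and its source is compact, so the image of $\iota$ is closed. The image is $(\mathbb{C}^*)^n$-invariant and contains the orbit as a dense subset, so it equals the orbit closure. This gives the diffeomorphism with $\mathbb{C}P^{p-1}\times\mathbb{C}P^{q-1}$, matching the moment polytope $\Delta^{p-1}\times\Delta^{q-1}$.

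\emph{Main obstacle.} The key point is to check that the closure adds nothing beyond $\iota(\mathbb{C}P^{p-1}\times\mathbb{C}P^{q-1})$. The compactness and closedness of the image of the Segre embedding settle this. The normalisation $w_1=(1,0)$, $w_2=(0,1)$ is what makes the identification with the Segre embedding transparent.
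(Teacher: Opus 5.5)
Your proof is correct, but it organises the argument differently from the paper. The paper proves only the orbit-closure statement directly. After the same $GL(2,\mathbb{C})$-normalisation of the two proportionality classes of rows to $(1,0)$ and $(0,1)$, it argues by ``taking limits'' (letting the scalars $z_i,w_j$ tend to $0$ while keeping one nonzero in each group) that $\overline{\mathcal{O}_{\mathbb{C}}(L)}\cong\mathbb{C}P^{p-1}\times\mathbb{C}P^{q-1}$ equivariantly. It then obtains the combinatorial type of $P_\sigma$ as the moment polytope of this toric variety, after dividing $(\mathbb{C}^*)^n$ by the stabiliser.

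You instead decouple the two claims. For the polytope, you note that $\{\bm e^a+\bm e^b\mid a\in A_1,\ b\in A_2\}$ is exactly the vertex set of the product of the coordinate simplices in $\mathbb{R}^{A_1}\times\mathbb{R}^{A_2}=\mathbb{R}^n$. This gives an affine (not merely combinatorial) equivalence, and it needs neither toric geometry nor the Gel'fand--Serganova description of $P_\sigma$ as a moment image.

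For the orbit closure, you replace the limiting heuristic by the explicit map $\iota(\ell_1,\ell_2)=\ell_1\oplus\ell_2$. You identify it in Pl\"ucker coordinates with the Segre embedding into the coordinate subspace spanned by the $z_{ab}$, $a\in A_1$, $b\in A_2$; the identification holds up to signs depending on whether $a<b$, which affect nothing. You then use compactness of the source to conclude that the closure adds nothing beyond $\iota(\mathbb{C}P^{p-1}\times\mathbb{C}P^{q-1})$. That is precisely the step the paper leaves informal, so your version is the more rigorous one. What the paper's route offers in exchange is the conceptual link that the polytope is the moment image of the orbit closure; in your write-up this appears only as a final consistency remark.
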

\begin{proof}
 Let $X$ be an $n \times 2$ matrix representing a $2$-plane $L \in W_\sigma$. Recall that $X$ defines $L$ up to the right $GL(2,\mathbb{C})$-action. The matrix $X$ gives a configuration of $n$ rows, i.e., vectors in $\mathbb{C}^2$. The admissible graph $G_\sigma$ is a complete bipartite graph $K_{p,q}$. In terms of the configuration of $n$ vectors in $\mathbb{C}^2$, this means that all vectors are nonzero and split into two groups, each consisting of pairwise proportional vectors. Using the $GL(2,\mathbb{C})$-action, we can normalize the configuration so that all vectors in the first group are proportional to $(1,0)$ and all vectors in the second group are proportional to $(0,1)$. Then the nonzero vectors take the form
$(z_1,0),\ \ldots,\ (z_p,0) \in \mathbb{C}^2$ and $(0,w_1),\ \ldots,\ (0,w_q) \in \mathbb{C}^2$, with $z_i, w_j \neq 0$.
After reordering the coordinates on $\mathbb{C}^n$ by the Weyl group, we obtain the following $n \times 2$ matrix:
$$
X = \begin{pmatrix}
z_1 & \dots & z_p & 0 & \dots & 0 \\
0   & \dots & 0   & w_1 & \dots & w_q
\end{pmatrix}^{\mathsf{T}}.
$$

Consider the action of the torus $(\mathbb{C}^*)^n$ on $L = [X]$. By taking limits, we may let the scalars $z_i$ and $w_j$ tend to $0$, provided that in each group at least one scalar remains nonzero. This leads to an $(\mathbb{C}^*)^n$-equivariant isomorphism
$$
\overline{\mathcal{O}_{\mathbb{C}}(L)} \;\cong\; \mathbb{C}P^{p-1} \times \mathbb{C}P^{q-1},
$$
where the induced $(\mathbb{C}^*)^n$-action on $\mathbb{C}P^{p-1} \times \mathbb{C}P^{q-1}$ is given by
$$
(t_{1},\ldots, t_{n}) \cdot ([z_{1}:\cdots : z_{p}], [w_{1}:\cdots : w_{q}]) = 
([t_{1}z_{1}:\cdots : t_{p}z_{p}],\, [t_{p+1}w_{1}:\cdots : t_{p+q}w_{q}]).
$$
If we quotient $(\mathbb{C}^*)^n$ by the stabilizer $\operatorname{Stab}(L)$, we obtain an effective action of $(\mathbb{C}^*)^{p+q-2}=(\mathbb{C}^*)^{p-1}\times (\mathbb{C}^{*})^{q-1}$ for which $\mathbb{C}P^{p-1}\times \mathbb{C}P^{q-1}$ is a toric variety with moment polytope $\Delta^{p-1}\times \Delta^{q-1}$.
\end{proof}

\begin{remark}
    A more general smooth result is known due to M. Noji and K. Ogiwara~\cite{NojiOgiwara}. They proved that for general $1\le k\le n$, every smooth closure of a $(\mathbb{C}^*)^n$-orbit in $G_{n,k}$ is a product (possibly of more than two) of projective spaces, and the corresponding simple polytope is a product of simplices.
\end{remark}

We now describe admissible polytopes of maximal dimension, i.e., of dimension $n-1$. We first need the following statement.

\begin{proposition}\label{boundaryofadmissible}
    A face of an admissible polytope in $\Delta_{n,2}$ is itself an admissible polytope.
\end{proposition}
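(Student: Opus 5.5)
Let $P_\sigma$ be admissible, $F$ a face of $P_\sigma$. We show $F=P_\tau$ for an admissible $\tau$ by exhibiting a point of the Grassmannian whose nonzero Plücker coordinates are exactly those indexed by $\tau$. The plan is to realize $F$ as the moment image of a limit of a one-parameter subgroup applied to a point $L\in W_\sigma$. This is the standard Gel'fand--Serganova/Atiyah mechanism: faces of the moment polytope of a toric variety $\overline{\mathcal{O}_{\mathbb{C}}(L)}$ are moment images of its torus-invariant subvarieties, which are orbit closures $\overline{\mathcal{O}_{\mathbb{C}}(L')}$ with $L'\in G_{n,2}$ (the orbit closure lies in the closed set $G_{n,2}$).

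\textbf{Step 1.} Since $F$ is a face of $P_\sigma\subset\mathbb{R}^n$, there is an integral functional $\xi=(\xi_1,\dots,\xi_n)\in(\mathbb{Z}^n)^*$ whose minimum on $P_\sigma$ is attained exactly on $F$. Because vertices of $P_\sigma$ are exactly $\bm e^i+\bm e^j$ with $(i,j)\in\sigma$ (they are vertices of $\Delta_{n,2}$, hence in convex position), we get $F=P_\tau$ with
$$\tau=\{(i,j)\in\sigma \mid \xi_i+\xi_j=m\},\qquad m=\min_{(i,j)\in\sigma}(\xi_i+\xi_j).$$
Rationality of $\xi$ is available since $P_\sigma$ is a lattice polytope; scale to make it integral.

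\textbf{Step 2.} Take $L\in W_\sigma$ represented by rows $v_1,\dots,v_n$, and consider $\lambda(s)=(s^{\xi_1},\dots,s^{\xi_n})\in(\mathbb{C}^*)^n$. As in the proof of Theorem~\ref{dimensionadmissiblepolytope}, $P^{ij}(\lambda(s)L)=s^{\xi_i+\xi_j}P^{ij}(L)$. In projective coordinates, multiply by $s^{-m}$ and let $s\to0$. The limit point of $\mathbb{C}P^{N_2}$ has coordinates $P^{ij}(L)$ for $(i,j)\in\tau$ and $0$ otherwise. Since $p(G_{n,2})$ is closed and $\lambda(s)L\in G_{n,2}$, the limit is $p(L')$ for some $L'\in G_{n,2}$. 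Its nonzero Plücker coordinates are exactly $\tau$, because $P^{ij}(L)\ne0$ for $(i,j)\in\sigma$. Hence $L'\in W_\tau$, so $\tau$ is admissible and $F=P_\tau$.

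\textbf{Alternative check via Theorem~\ref{classification}.} As a combinatorial cross-check, we can verify directly that $\tau$ is of complete-multipartite form. The main obstacle is this step. The linear condition $\xi_i+\xi_j=m$ restricted to $K(A_1,\dots,A_N)$ need not obviously yield a multipartite graph, and one would have to analyze the minimizers case by case (e.g.\ $N\ge3$ forces a structure on the values of $\xi$ on parts). The geometric limit argument in Step 2 avoids this. The only care needed there is the closedness of $G_{n,2}$ and the correct identification of the vertex set of $F$, both of which are routine.
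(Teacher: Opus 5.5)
Your proof is correct and uses the same mechanism as the paper: faces of the moment polytope of $\overline{\mathcal{O}_{\mathbb{C}}(L)}$ are moment images of the lower-dimensional orbits in its boundary, and these orbits lie in the closed subvariety $G_{n,2}$. The paper only asserts this in two lines, whereas your one-parameter-subgroup limit $\lim_{s\to 0}\lambda(s)L$ constructs, for each face $F$, a point whose Pl\"ucker support is exactly $\tau$, so you obtain both $F=P_\tau$ and $W_\tau\neq\varnothing$ explicitly.
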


\begin{proof}
    Indeed, the orbit closure $\overline{\mathcal{O}_{\mathbb{C}}(L)}$ contains orbits of smaller dimension on its boundary. Under the moment map $\mu$, these orbits project to the boundary faces of the moment polytope.
\end{proof}

Therefore, in order to describe an admissible polytope of dimension $n-1$, it is enough to describe its facets that intersect the interior of $\Delta_{n,2}$ and facets that lie in the boundary $\partial \Delta_{n,2}$.

\begin{theorem}\label{(n-1)-dim_adm}
Let $P_\sigma$ be an admissible polytope of dimension $n-1$ with admissible graph $G_\sigma = K(A_1,\ldots, A_N)$, $N \ge 3$. Then
\begin{enumerate}
    \item[(a)] The facets of $P_\sigma$ intersecting the interior of $\Delta_{n,2}$ are exactly the admissible polytopes $P_\tau$ with $G_\tau = K(A_k, [n] \setminus A_k)$ for those $A_k$ with $|A_k| > 1$.
    \item[(b)] The facets of $P_\sigma$ lying in the boundary $\partial \Delta_{n,2}$ are the admissible polytopes $P_\tau$ with $G_\tau = K(A_k, [n]\setminus A_k)$ for those $A_k$ with $|A_k| = 1$, as well as subgraphs of $G_\sigma$ that can be obtained by isolating a vertex so that the number of parts in the graph remains at least $3$.
\end{enumerate}
\end{theorem}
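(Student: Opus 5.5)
The plan is to combine Proposition~\ref{boundaryofadmissible} with the classification of $(n-2)$-dimensional admissible polytopes. Since $\dim P_\sigma = n-1$, Theorem~\ref{dimensionadmissiblepolytope} forces $B=\varnothing$ and $N\ge 3$. By Proposition~\ref{boundaryofadmissible}, every facet of $P_\sigma$ is an admissible polytope $P_\tau$ of dimension $n-2$. Its vertex set is $\tau\subseteq\sigma$, because the vertices of a face are among the vertices of $P_\sigma$. So the task is to decide which admissible $(n-2)$-dimensional $P_\tau$ with $G_\tau\subseteq G_\sigma$ are actually faces of $P_\sigma$, i.e.\ are cut out by a hyperplane that is valid on all of $P_\sigma$.

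By Theorem~\ref{dimensionadmissiblepolytope} and the computation of $\mathcal{A}_n$, an $(n-2)$-dimensional admissible $P_\tau$ has one of two graph types.
\begin{enumerate}
\item[(i)] $G_\tau = K(S,[n]\setminus S)$ with no isolated vertices. Then $P_\tau$ lies on the hyperplane $x(S):=\sum_{i\in S}x_i = 1$ (written affinely on $\sum x_i=2$).
\item[(ii)] $G_\tau$ has exactly one isolated vertex $i$ and at least $3$ parts. Then $P_\tau$ lies on $x_i=0$.
\end{enumerate}

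For type (i), the inclusion $K(S,[n]\setminus S)\subseteq K(A_1,\dots,A_N)$ means no part $A_k$ meets both $S$ and its complement, so $S$ is a union of parts. For $x(S)=1$ to support $P_\sigma$, we need $x(S)\le 1$ (or $\ge 1$) at every vertex $\bm e^a+\bm e^b$, $(a,b)\in\sigma$. If $S$ contains two parts, some edge lies inside $S$, giving $x(S)=2$. By symmetry the same happens if $[n]\setminus S$ contains two parts, giving $x(S)=0$. Hence $S$ or its complement is a single part $A_k$, and then indeed $x(A_k)\le 1$ on all vertices of $P_\sigma$ because parts are independent sets. This yields exactly the facets $K(A_k,[n]\setminus A_k)$.

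Next I decide where these facets sit. If $|A_k|=1$, say $A_k=\{i\}$, the hyperplane is $x_i=1$ and the facet is a boundary simplex, so it belongs to case (b). If $|A_k|>1$, the complement also has at least $2$ elements because $N\ge3$, so $p,q>1$. The facet is then cut out by $H_{p,q}$, and I will exhibit an interior point of $\Delta_{n,2}$ on it, for example $x_i = 1/(2|A_k|)$ on $A_k$ and $x_j = 1/(2(n-|A_k|))$ elsewhere. This gives case (a).

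For type (ii), $x_i\ge 0$ always supports $P_\sigma$. The resulting face has graph $G_\sigma$ with $i$ isolated, and its parts are the nonempty sets among $A_k\setminus\{i\}$. By Theorem~\ref{dimensionadmissiblepolytope}, its dimension is $n-2$ precisely when at least $3$ parts remain, and $n-3$ otherwise. These faces lie in $\partial\Delta_{n,2}$, which completes (b).

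The main obstacle I expect is rigor in the first step. I must check that a facet's vertex set is exactly $\tau$ for an admissible $\tau\subseteq\sigma$, and that the facet is the full intersection of $P_\sigma$ with the supporting hyperplane rather than a proper subpolytope of it. I would settle this by comparing vertex sets directly: the vertices of $P_\sigma$ on $x(A_k)=1$ are exactly the pairs with one end in $A_k$, which is the full edge set of $K(A_k,[n]\setminus A_k)$, and similarly for $x_i=0$. The dimension formula then confirms these faces are facets.
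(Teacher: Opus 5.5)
Your proof is correct and takes a genuinely different route from the paper. The paper works on the orbit closure $\overline{\mathcal{O}_{\mathbb{C}}(L)}$. It degenerates $t\cdot L$ with $t_i=\varepsilon\to\infty$ on $A_k$ and $t_j=1$ elsewhere, which produces the limit graph $K(A_k,[n]\setminus A_k)$. It then runs a case analysis over the growth patterns of $t$ to argue that every other limit graph has an isolated vertex or is a star. Finally it sorts the boundary facets using Theorem~\ref{dimensionadmissiblepolytope}.

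You use the torus only through Proposition~\ref{boundaryofadmissible}. The dimension formula then reduces the candidate facet hyperplanes to $x(S)=1$ with $S$ a union of parts, or to $x_i=0$, and you check the supporting condition directly on the vertices $\bm e^a+\bm e^b$, $(a,b)\in\sigma$. Your observation that $x(S)$ takes both values $0$ and $2$ on $P_\sigma$ unless $S$ or its complement is a single part replaces the paper's limit analysis. This makes exhaustiveness a finite check on vertices, rather than an argument about which pair of coordinates grows fastest. It also yields the valid inequalities $x(A_k)\le 1$ directly, and hence the half-space description in the corollary that follows. Your explicit interior point settles that these facets meet the interior of $\Delta_{n,2}$, which the paper only asserts. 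The paper's route gives geometric content in return: each facet appears as the moment image of an explicit boundary orbit reached by a concrete one-parameter degeneration, which underlies the later remark on the boundary operator $d_C$.

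One arithmetic slip needs fixing. Your witness point has coordinate sum $1$: it lies on the linear hyperplane $H_{p,q}$ but not on $x_1+\cdots+x_n=2$. Take instead $x_i=1/|A_k|$ for $i\in A_k$ and $x_j=1/(n-|A_k|)$ otherwise. This is the barycentre of the vertices of $P_\tau$, and its coordinates lie in $(0,1/2]$ because $|A_k|,\,n-|A_k|\ge 2$.
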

\begin{proof}
Let $t = (t_1,\ldots, t_n) \in (\mathbb{C}^*)^n$. For any $L \in G_{n,2}$,
$$P^{ij}(t \cdot L) = t_i t_j P^{ij}(L).$$

First, we exhibit a family of limits that yield facets intersecting the 
interior of $\Delta_{n,2}$. Choose a component $A_k$ with $|A_k| > 1$. 
Let $\varepsilon \to \infty$ and set $t_i = \varepsilon$ for $i \in A_k$, 
$t_j = 1$ for $j \notin A_k$. Dividing all coordinates in $\mathbb{C}P^{N_2}$ 
by $\varepsilon$, edges between $A_r$ and $A_s$ ($r,s \neq k$) vanish, 
while edges between $A_k$ and $A_r$ ($r \neq k$) survive. 
The limit graph $G_\tau$ is the complete bipartite graph $K(A_k, [n]\setminus A_k)$,
and the corresponding admissible polytope $P_\tau$ is a facet of $P_\sigma$ 
intersecting the interior of $\Delta_{n,2}$ (see Fig.~\ref{fig:limit_graph} for an example).

\begin{figure}[ht]
    \centering
    \tikzset{every picture/.style={line width=0.75pt}} 

\begin{tikzpicture}[x=0.75pt,y=0.75pt,yscale=-1,xscale=1]

\draw    (170.46,129.54) ;
\draw [shift={(170.46,129.54)}, rotate = 0] [color={rgb, 255:red, 0; green, 0; blue, 0 }  ][fill={rgb, 255:red, 0; green, 0; blue, 0 }  ][line width=0.75]      (0, 0) circle [x radius= 3.35, y radius= 3.35]   ;
\draw [shift={(170.46,129.54)}, rotate = 0] [color={rgb, 255:red, 0; green, 0; blue, 0 }  ][fill={rgb, 255:red, 0; green, 0; blue, 0 }  ][line width=0.75]      (0, 0) circle [x radius= 3.35, y radius= 3.35]   ;
\draw    (180.67,170.35) ;
\draw [shift={(180.67,170.35)}, rotate = 0] [color={rgb, 255:red, 0; green, 0; blue, 0 }  ][fill={rgb, 255:red, 0; green, 0; blue, 0 }  ][line width=0.75]      (0, 0) circle [x radius= 3.35, y radius= 3.35]   ;
\draw [shift={(180.67,170.35)}, rotate = 0] [color={rgb, 255:red, 0; green, 0; blue, 0 }  ][fill={rgb, 255:red, 0; green, 0; blue, 0 }  ][line width=0.75]      (0, 0) circle [x radius= 3.35, y radius= 3.35]   ;
\draw    (184.08,211.59) ;
\draw [shift={(184.08,211.59)}, rotate = 0] [color={rgb, 255:red, 0; green, 0; blue, 0 }  ][fill={rgb, 255:red, 0; green, 0; blue, 0 }  ][line width=0.75]      (0, 0) circle [x radius= 3.35, y radius= 3.35]   ;
\draw [shift={(184.08,211.59)}, rotate = 0] [color={rgb, 255:red, 0; green, 0; blue, 0 }  ][fill={rgb, 255:red, 0; green, 0; blue, 0 }  ][line width=0.75]      (0, 0) circle [x radius= 3.35, y radius= 3.35]   ;
\draw    (431.88,74.4) ;
\draw [shift={(431.88,74.4)}, rotate = 0] [color={rgb, 255:red, 0; green, 0; blue, 0 }  ][fill={rgb, 255:red, 0; green, 0; blue, 0 }  ][line width=0.75]      (0, 0) circle [x radius= 3.35, y radius= 3.35]   ;
\draw [shift={(431.88,74.4)}, rotate = 0] [color={rgb, 255:red, 0; green, 0; blue, 0 }  ][fill={rgb, 255:red, 0; green, 0; blue, 0 }  ][line width=0.75]      (0, 0) circle [x radius= 3.35, y radius= 3.35]   ;
\draw    (450.94,102.84) ;
\draw [shift={(450.94,102.84)}, rotate = 0] [color={rgb, 255:red, 0; green, 0; blue, 0 }  ][fill={rgb, 255:red, 0; green, 0; blue, 0 }  ][line width=0.75]      (0, 0) circle [x radius= 3.35, y radius= 3.35]   ;
\draw [shift={(450.94,102.84)}, rotate = 0] [color={rgb, 255:red, 0; green, 0; blue, 0 }  ][fill={rgb, 255:red, 0; green, 0; blue, 0 }  ][line width=0.75]      (0, 0) circle [x radius= 3.35, y radius= 3.35]   ;
\draw    (397.84,54.7) ;
\draw [shift={(397.84,54.7)}, rotate = 0] [color={rgb, 255:red, 0; green, 0; blue, 0 }  ][fill={rgb, 255:red, 0; green, 0; blue, 0 }  ][line width=0.75]      (0, 0) circle [x radius= 3.35, y radius= 3.35]   ;
\draw [shift={(397.84,54.7)}, rotate = 0] [color={rgb, 255:red, 0; green, 0; blue, 0 }  ][fill={rgb, 255:red, 0; green, 0; blue, 0 }  ][line width=0.75]      (0, 0) circle [x radius= 3.35, y radius= 3.35]   ;
\draw    (475.45,199.12) ;
\draw [shift={(475.45,199.12)}, rotate = 0] [color={rgb, 255:red, 0; green, 0; blue, 0 }  ][fill={rgb, 255:red, 0; green, 0; blue, 0 }  ][line width=0.75]      (0, 0) circle [x radius= 3.35, y radius= 3.35]   ;
\draw [shift={(475.45,199.12)}, rotate = 0] [color={rgb, 255:red, 0; green, 0; blue, 0 }  ][fill={rgb, 255:red, 0; green, 0; blue, 0 }  ][line width=0.75]      (0, 0) circle [x radius= 3.35, y radius= 3.35]   ;
\draw    (407.37,240.7) ;
\draw [shift={(407.37,240.7)}, rotate = 0] [color={rgb, 255:red, 0; green, 0; blue, 0 }  ][fill={rgb, 255:red, 0; green, 0; blue, 0 }  ][line width=0.75]      (0, 0) circle [x radius= 3.35, y radius= 3.35]   ;
\draw [shift={(407.37,240.7)}, rotate = 0] [color={rgb, 255:red, 0; green, 0; blue, 0 }  ][fill={rgb, 255:red, 0; green, 0; blue, 0 }  ][line width=0.75]      (0, 0) circle [x radius= 3.35, y radius= 3.35]   ;
\draw   (116,170.35) .. controls (116,136.93) and (144.96,109.84) .. (180.67,109.84) .. controls (216.39,109.84) and (245.35,136.93) .. (245.35,170.35) .. controls (245.35,203.76) and (216.39,230.85) .. (180.67,230.85) .. controls (144.96,230.85) and (116,203.76) .. (116,170.35) -- cycle ;
\draw  [dash pattern={on 0.84pt off 2.51pt}] (354.27,86.1) .. controls (354.27,52.69) and (383.23,25.6) .. (418.95,25.6) .. controls (454.66,25.6) and (483.62,52.69) .. (483.62,86.1) .. controls (483.62,119.52) and (454.66,146.61) .. (418.95,146.61) .. controls (383.23,146.61) and (354.27,119.52) .. (354.27,86.1) -- cycle ;
\draw  [dash pattern={on 0.84pt off 2.51pt}] (372.65,222.1) .. controls (372.65,188.68) and (401.61,161.59) .. (437.33,161.59) .. controls (473.04,161.59) and (502,188.68) .. (502,222.1) .. controls (502,255.51) and (473.04,282.6) .. (437.33,282.6) .. controls (401.61,282.6) and (372.65,255.51) .. (372.65,222.1) -- cycle ;
\draw    (170.46,129.54) -- (397.84,54.7) ;
\draw    (170.46,129.54) -- (431.88,74.4) ;
\draw    (170.46,129.54) -- (450.94,102.84) ;
\draw    (180.67,170.35) -- (397.84,54.7) ;
\draw    (180.67,170.35) -- (431.88,74.4) ;
\draw    (180.67,170.35) -- (450.94,102.84) ;
\draw    (184.08,211.59) -- (397.84,54.7) ;
\draw    (184.08,211.59) -- (431.88,74.4) ;
\draw    (184.08,211.59) -- (450.94,102.84) ;
\draw    (184.08,211.59) -- (407.37,240.7) ;
\draw    (184.08,211.59) -- (475.45,199.12) ;
\draw    (180.67,170.35) -- (407.37,240.7) ;
\draw    (180.67,170.35) -- (475.45,199.12) ;
\draw    (170.46,129.54) -- (407.37,240.7) ;
\draw    (170.46,129.54) -- (475.45,199.12) ;
\draw  [dash pattern={on 0.84pt off 2.51pt}]  (475.45,199.12) -- (450.94,102.84) ;
\draw  [dash pattern={on 0.84pt off 2.51pt}]  (475.45,199.12) -- (431.88,74.4) ;
\draw  [dash pattern={on 0.84pt off 2.51pt}]  (475.45,199.12) -- (397.84,54.7) ;
\draw  [dash pattern={on 0.84pt off 2.51pt}]  (407.37,240.7) -- (450.94,102.84) ;
\draw  [dash pattern={on 0.84pt off 2.51pt}]  (407.37,240.7) -- (431.88,74.4) ;
\draw  [dash pattern={on 0.84pt off 2.51pt}]  (407.37,240.7) -- (397.84,54.7) ;

\draw (126,161.4) node [anchor=north west][inner sep=0.75pt]    {$A_{k}$};
\draw (431,32.4) node [anchor=north west][inner sep=0.75pt]    {$A_{r}$};
\draw (455,244.4) node [anchor=north west][inner sep=0.75pt]    {$A_{s}$};
\draw (153,123.4) node [anchor=north west][inner sep=0.75pt]    {$1$};
\draw (164,159.4) node [anchor=north west][inner sep=0.75pt]    {$2$};
\draw (168,205.4) node [anchor=north west][inner sep=0.75pt]    {$3$};
\draw (404,38.4) node [anchor=north west][inner sep=0.75pt]    {$4$};
\draw (439,63.4) node [anchor=north west][inner sep=0.75pt]    {$5$};
\draw (461,90.4) node [anchor=north west][inner sep=0.75pt]    {$6$};
\draw (483,198.4) node [anchor=north west][inner sep=0.75pt]    {$7$};
\draw (409.37,244.1) node [anchor=north west][inner sep=0.75pt]    {$8$};

\end{tikzpicture}
    \caption{The limit graph $G_\tau = K(A_k, [n]\setminus A_k)$ for $A_k = \{1,2,3\}$. 
Edges between $\{4,5,6\}$ and $\{7,8\}$ vanish as $\varepsilon \to \infty$; edges incident to $A_k$ survive.}
    \label{fig:limit_graph}
\end{figure}

We now show that all other limits yield graphs with at least one isolated 
vertex or a star graph, and therefore the corresponding polytopes lie on 
the boundary $\partial \Delta_{n,2}$.

\emph{Case 1.} None of the $t_i$ tend to $\infty$. 
If some $t_j \to 0$, then $j$ becomes an isolated vertex.

\emph{Case 2.} There exist $i \in A_p$, $j \in A_q$ ($p \neq q$) with 
$t_i, t_j \to \infty$. Choose a pair $(i,j) \in \sigma$ among all such 
vertices for which $t_i t_j \to \infty$ no slower than for any other pair. 
Then $P^{ij}(t \cdot L) \to \infty$. Dividing all coordinates by $t_i t_j$, 
all vertices in components $A_k$ with $k \neq p,q$ become isolated.

\emph{Case 3.} All $t_i \to \infty$ lie in one component $A_k$. 
Choose $i_0 \in A_k$ with maximal order. Dividing all coordinates by 
$t_{i_0}$, we may assume $t_j = 1$ for $j \notin A_k$ (otherwise $j$ 
becomes isolated). Vertices in $A_k$ of smaller order become isolated. 
For the remaining vertices, edges between $A_r$ and $A_s$ ($r,s \neq k$) 
vanish. The limit graph is a complete bipartite graph, possibly with 
isolated vertices. If $|A_k| = 1$, we obtain a star graph.

Thus, the only limits without isolated vertices and with $N \ge 2$ are 
the complete bipartite graphs $K(A_k, [n]\setminus A_k)$ where $|A_k| > 1$. 
These correspond precisely to the facets of $P_\sigma$ that intersect the 
interior of $\Delta_{n,2}$.

It remains to observe that the subgraphs $G_\tau$ of $G_\sigma$ that determine facets of $P_\sigma$ are either star graphs, in which case $P_\tau$ is a boundary simplex, or they cut out an admissible $(n-2)$-dimensional polytope in a copy of the hypersimplex $\Delta_{n-1,2}$ inside the boundary $\partial \Delta_{n,2}$. The latter corresponds to isolating exactly one vertex, provided that the number of parts is at least $3$, in accordance with Theorem~\ref{dimensionadmissiblepolytope}.
\end{proof}

\begin{corollary}
Let $P_\sigma$ be an admissible polytope of dimension $n-1$ with admissible graph $G_\sigma = K(A_1,\ldots, A_N)$, $N\ge 3$. Then $P_\sigma$ lies in the intersection of $\Delta_{n,2}$ with the closed half-spaces determined by the hyperplanes (\ref{H_p,q}) that correspond to the graphs $K(A_k, [n] \setminus A_k)$ for $|A_k| > 1$.
\end{corollary}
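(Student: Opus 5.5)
The plan is to deduce the corollary from Theorem~\ref{(n-1)-dim_adm}(a) together with convexity, checking the sign of each defining linear functional directly on the vertices. Fix $k$ with $|A_k|>1$ and write $p=|A_k|$, $q=n-p$. Since $N\ge 3$, we have $q\ge 2$, so $p,q>1$ and the hyperplane
$$H_{A_k}\colon\ \sum_{i\in A_k} x_i-\sum_{j\notin A_k} x_j=0$$
is one of the hyperplanes $H_{p,q}$ from (\ref{H_p,q}). On the affine plane $x_1+\cdots+x_n=2$ it reads $\sum_{i\in A_k}x_i=1$, as in the remark following the theorem on $H_{p,q}$. The closed half-space we need is therefore $\{\sum_{i\in A_k}x_i\ge 1\}$, equivalently $\sum_{i\in A_k}x_i-\sum_{j\notin A_k}x_j\ge 0$.

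Next we evaluate the functional $\xi_k(\bm x)=\sum_{i\in A_k}x_i-\sum_{j\notin A_k}x_j$ on the vertices $\bm e^i+\bm e^j$ of $P_\sigma$, that is, for $(i,j)\in\sigma$. By Theorem~\ref{classification}, $i$ and $j$ lie in distinct parts. If one of them lies in $A_k$, then $\xi_k=1-1=0$. If neither does, then $\xi_k=-2<0$.

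So every vertex satisfies $\xi_k\le 0$, and hence so does the whole polytope $P_\sigma$ as their convex hull. This fixes the correct closed half-space: it is $\{\xi_k\le 0\}$, i.e.\ $\sum_{i\in A_k}x_i\le 1$ on the affine plane. The vertices with $\xi_k=0$ are exactly the edges of $K(A_k,[n]\setminus A_k)$, which matches Theorem~\ref{(n-1)-dim_adm}(a): $H_{A_k}\cap P_\sigma=P_\tau$ is a facet, so $H_{A_k}$ is a supporting hyperplane of $P_\sigma$.

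Intersecting over all $k$ with $|A_k|>1$, and using $P_\sigma\subseteq\Delta_{n,2}$, gives the claim. The only point needing care is choosing the correct side of each hyperplane, and the vertex computation above settles that. Optionally, one can remark via Theorem~\ref{(n-1)-dim_adm}(b) that together with the boundary facets these inequalities cut out $P_\sigma$ exactly, but the statement only asks for containment.
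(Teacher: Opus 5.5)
Your argument is correct, but it is not quite the paper's route. The paper gives no separate proof and reads the corollary off Theorem~\ref{(n-1)-dim_adm}(a). There, the facets of $P_\sigma$ meeting the interior of $\Delta_{n,2}$ are the $P_\tau$ with $G_\tau=K(A_k,[n]\setminus A_k)$, $|A_k|>1$. Each such facet spans $H_{p,q}\cap\{x_1+\cdots+x_n=2\}$, and a hyperplane spanned by a facet is a supporting hyperplane, so $P_\sigma$ lies in one of its closed half-spaces.

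You instead evaluate $\xi_k$ directly on the vertices $\bm e^i+\bm e^j$, $(i,j)\in\sigma$. This uses only the complete multipartite structure of $G_\sigma$, not the orbit-limit analysis behind Theorem~\ref{(n-1)-dim_adm}. In your write-up that theorem is only a consistency check. Your computation even reproves that $H_{A_k}\cap P_\sigma=P_\tau$ is a face of $P_\sigma$; it is a facet by Theorem~\ref{dimensionadmissiblepolytope} with $N=2$, $B=\varnothing$. It also identifies the side explicitly, namely $\sum_{i\in A_k}x_i\le 1$, which the paper leaves implicit. What the paper's route gives beyond this is the word ``exactly'' in Theorem~\ref{(n-1)-dim_adm}(a), i.e.\ that no other facets meet the interior. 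The corollary does not need that.

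One thing to fix: in your first paragraph you say the relevant half-space is $\sum_{i\in A_k}x_i\ge 1$. That is the wrong side. Pairs $(i,j)\in\sigma$ with neither endpoint in $A_k$ exist because $N\ge 3$, and they give $\sum_{i\in A_k}x_i=0$. The vertices of $\Delta_{n,2}$ with $\sum_{i\in A_k}x_i=2$ are exactly the pairs inside $A_k$, and these are absent from $\sigma$. Your later vertex computation gives the correct inequality $\xi_k\le 0$, so remove the earlier assertion to keep the write-up consistent.
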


The arrangement of hyperplanes $\mathcal{A}_n$ in $\mathbb{R}^n$ defines an \emph{intersection lattice} $L(\mathcal{A}_n)$. By definition (see \cite{Stanley}), the intersection lattice $L(\mathcal{A}_n)$ is the set of all nonempty intersections of hyperplanes in $\mathcal{A}_n$, including $\mathbb{R}^n$ itself as the intersection over the empty set. A \emph{region} of the arrangement $\mathcal{A}_n$ is a connected component of the complement $X$ of the hyperplanes:
$$X = \mathbb{R}^n \setminus \bigcup_{H \in \mathcal{A}_n} H.$$
The set of all regions is denoted by $R(\mathcal{A}_n)$. A (closed) \emph{face} $F \subseteq \mathbb{R}^n$ of the arrangement $\mathcal{A}_n$ is a nonempty set of the form
$$F = \overline{R} \cap x,\quad R \in R(\mathcal{A}_n),\; x \in L(\mathcal{A}_n).$$
In particular, if $x = \varnothing$, then $\overline{R}$ itself is also considered as a face. The set of all faces is denoted by $F(\mathcal{A}_n)$. Note that 
$$\mathbb{R}^n = \bigsqcup_{F \in F(\mathcal{A}_n)} \mathring{F}, \quad \text{where } \mathring{F} = \operatorname{relint} F.$$

\begin{proposition}[\cite{BT3}]\label{arrangement}
    For each chamber $C$ lying in the interior of $\Delta_{n,2}$, there exists a face $F \in F(\mathcal{A}_n)$ of the hyperplane arrangement $\mathcal{A}_n$ such that
$$C = \mathring{F} \cap \Delta_{n,2}.$$
Conversely, any face $F \in F(\mathcal{A}_n)$ whose intersection with $\Delta_{n,2}$ is nonempty determines a chamber $C = \mathring{F} \cap \Delta_{n,2}$.
\end{proposition}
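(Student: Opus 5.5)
The plan is to show that, for a point $\bm x$ in the interior of $\Delta_{n,2}$, the set $\omega(\bm x)=\{\sigma \text{ admissible} \mid \bm x\in\mathring P_\sigma\}$ depends only on the open face $\mathring F$ of $\mathcal{A}_n$ containing $\bm x$. Equivalently, $\omega(\bm x)$ is determined by the \emph{sign vector} of $\bm x$ with respect to the affine hyperplanes $x_{i_1}+\dots+x_{i_p}=1$ ($p,q>1$), that is, the hyperplanes $H_{p,q}$ restricted to $\{\sum x_i=2\}$. The hyperplanes $x_i=0$ and $x_i=\sum_{j\ne i}x_j$ do not meet the interior, so in the interior faces of $\mathcal{A}_n$ correspond exactly to sign vectors relative to the $H_{p,q}$. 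Once this dependence is established, each chamber $C_\omega$ in the interior is a union of sets $\mathring F\cap\Delta_{n,2}$. Conversely, distinct sign vectors give distinct $\omega$, because the bipartite polytope $P_{K(S,[n]\setminus S)}=\Delta_{n,2}\cap H_{p,q}$ belongs to $\omega(\bm x)$ exactly when $\bm x$ lies on that hyperplane. If $\bm x$ lies strictly on one side, then the $(n-1)$-dimensional admissible polytopes record which side it is on. Together these give both directions of the statement.

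First I describe $\mathring P_\sigma$ explicitly for each admissible $\sigma$ whose relative interior meets $\mathring\Delta_{n,2}$. By Theorem~\ref{boundary_admissible} these have $B=\varnothing$. There are two cases. If $N=2$, then $P_\sigma=\Delta_{n,2}\cap H_{p,q}$, and its relative interior is the set of points on $H_{p,q}$ with $0<x_i<1$ (relative interior inside the product of simplices). If $N\ge 3$, then by Theorem~\ref{(n-1)-dim_adm} and the Corollary, $P_\sigma$ is the intersection of $\Delta_{n,2}$ with the closed half-spaces $\sum_{i\in A_k}x_i\le 1$ for $|A_k|>1$. The facets of $P_\sigma$ listed in Theorem~\ref{(n-1)-dim_adm}(b) lie in $\partial\Delta_{n,2}$, so they impose no further condition in the interior. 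Hence
$$\mathring P_\sigma\cap\mathring\Delta_{n,2}=\Bigl\{\bm x\in\mathring\Delta_{n,2}\;\Bigm|\;\textstyle\sum_{i\in A_k}x_i<1 \text{ for all } k \text{ with } |A_k|>1\Bigr\}.$$
To justify this I need that an $(n-1)$-polytope equals the intersection of the half-spaces of its facets, and that the facets listed in Theorem~\ref{(n-1)-dim_adm} are all of them. I take both from that theorem.

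With this description, membership of $\bm x$ in each $\mathring P_\sigma$ is a Boolean function of the signs of $\sum_{i\in S}x_i-1$ over subsets $S$ with $2\le|S|\le n-2$. For $\sigma$ with $N=2$ the relevant sign is zero; for $\sigma$ with $N\ge3$ the relevant signs are negative. Therefore $\omega(\bm x)$ is constant on each $\mathring F\cap\mathring\Delta_{n,2}$, which proves the converse direction, provided $\mathring F\cap\Delta_{n,2}$ lies in the interior. I will restrict to faces meeting the interior. Faces meeting only $\partial\Delta_{n,2}$ are handled by induction via the boundary copies $\Delta_{n-1,2}(i)$ and simplices, or else excluded from the statement.

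For injectivity, take $\bm x$ and $\bm y$ with different sign vectors. Then some $S$ has $\sum_S x_i-1$ and $\sum_S y_i-1$ of different signs. If one of them is zero, the bipartite $\sigma=K(S,[n]\setminus S)$ separates $\bm x$ from $\bm y$. Otherwise, replacing $S$ by its complement if necessary, $\sum_S x_i<1<\sum_S y_i$. Take $\sigma=K(S,\{j\}_{j\notin S})$, which has $N\ge3$ since $|[n]\setminus S|\ge2$ and whose only non-singleton part is $S$. Then $\bm x\in\mathring P_\sigma$ and $\bm y\notin\mathring P_\sigma$.

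I expect the main obstacle to be the exact half-space description of $\mathring P_\sigma$ for $N\ge3$: the claim that no facets other than those in Theorem~\ref{(n-1)-dim_adm} appear, and the boundary bookkeeping for faces of $\mathcal{A}_n$ touching $\partial\Delta_{n,2}$. If that description proves delicate, I would verify it directly. The first step would be to show that the vertices $\bm e^i+\bm e^j$ with $i,j$ in different parts are exactly the vertices of $\Delta_{n,2}$ satisfying all the inequalities $\sum_{A_k}x_i\le1$. The second step would be to show that the polytope these inequalities cut from $\Delta_{n,2}$ is integral, which holds because the $A_k$ are disjoint and the constraint matrix is totally unimodular.
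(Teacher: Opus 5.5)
Your argument is correct, and it takes a different route from the paper.

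\textbf{The paper's route.} It argues chamber by chamber.
A chamber of dimension $n-1$ is said to avoid all hyperplanes of $\mathcal{A}_n$ and to be maximal with this property, so it equals $R\cap\Delta_{n,2}$ for a region $R$.
A lower-dimensional chamber is placed in the flat $L_C$ of the hyperplanes containing it, and identified with a component of $L_C$ minus the remaining hyperplanes.
The converse is obtained ``by reversing the argument''.

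\textbf{Your route.} You prove a single covector statement: on $\mathring\Delta_{n,2}$, $\omega(\bm x)$ is an injective function of the sign vector of $\bm x$ with respect to the hyperplanes $\sum_{i\in S}x_i=1$.
This treats all chamber dimensions and both directions at once.
It also makes explicit the two facts that the paper's ``maximality'' and ``reversal'' use tacitly:
\begin{itemize}
\item points of one open face lie in exactly the same $\mathring P_\sigma$, which is your strict-inequality description;
\item points of different open faces are separated by some admissible polytope, which is your use of $K(S,[n]\setminus S)$ and $K(S,\{j\}_{j\notin S})$.
\end{itemize}
The price is that you need the equality $P_\sigma=\Delta_{n,2}\cap\bigcap_k\{\sum_{A_k}x_i\le1\}$, not just the containment in the Corollary.
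Your fallback proves this equality without the orbit-limit analysis of Theorem~\ref{(n-1)-dim_adm}.
The supports $[n]$, $A_k$, $\{i\}$ form a laminar family, so the system is totally unimodular.
Equivalently, $P_\sigma$ is the base polytope of the rank-$2$ matroid with parallel classes $A_k$.

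\textbf{Three small points.}
\begin{enumerate}
\item $B=\varnothing$ does not follow from the statement of Theorem~\ref{boundary_admissible}, since $|B|=1$, $N\ge3$ gives dimension $n-2$. It follows from its proof: $|B|\ge1$ forces $P_\sigma\subseteq\{x_m=0\}$. Likewise, $N=2$ with a singleton part gives a simplex in $\{x_i=1\}$.
\item To conclude that $\mathring F\cap\Delta_{n,2}$ is a whole chamber, you need two observations. It lies in $\mathring\Delta_{n,2}$, because the facet hyperplanes belong to $\mathcal{A}_n$. And no boundary point shares $\omega$ with an interior point, because $\binom{[n]}{2}\in\omega(\bm x)$ only for interior $\bm x$.
\item You leave the boundary faces in the converse as a hedge; the paper does not treat them either. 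Your suggested induction does go through. $\mathcal{A}_n\cap\{x_i=0\}$ is a copy of $\mathcal{A}_{n-1}$. On a simplex facet $\{x_i=1\}$, each hyperplane of $\mathcal{A}_n$ either contains the facet or meets it in a face.
\end{enumerate}
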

\begin{proof}
If $\dim C = n-1$, then $C$ is a connected component of the complement in $\Delta_{n,2}$ of the union of all hyperplanes from $\mathcal{A}_n$. Indeed, by definition, $C$ is the intersection of the interiors of all $(n-1)$-dimensional admissible polytopes that contain it. By the previous theorem, the boundary of each such polytope is formed by hyperplanes from $\mathcal{A}_n$. Therefore, $C$ avoids all these hyperplanes and is maximal with respect to this property. Since the complement $\mathbb{R}^n \setminus \bigcup_{H \in \mathcal{A}_n} H$ is a union of open regions $R \in R(\mathcal{A}_n)$, and $C$ is an open connected subset of $\Delta_{n,2}$ that does not meet any hyperplane, $C$ must coincide with the intersection of exactly one such region $R$ with the hypersimplex:
$$C = R \cap \Delta_{n,2} = \mathring{R} \cap \Delta_{n,2}.$$
Thus, $C = \mathring{F} \cap \Delta_{n,2}$ for the face $F = \overline{R}$ of the arrangement $\mathcal{A}_n$, which completes the proof for the maximal-dimensional case. 

If $\dim C < n-1$, then by definition of the chamber decomposition, $C$ is the intersection 
of the relative interiors of all admissible polytopes that contain it. By assumption, $C$ lies 
in the interior of the hypersimplex $\Delta_{n,2}$. By Theorem~\ref{boundary_admissible}, 
every admissible polytope containing $C$ has dimension at least $n-2$. Since $\dim C < n-1$, 
this intersection must involve at least one admissible polytope of dimension $n-2$; the 
relative interior of each such polytope lies in a hyperplane $H \in \mathcal{A}_n$. 
Let $\mathcal{H}_C = \{ H \in \mathcal{A}_n \mid C \subseteq H \}$ and let 
$L_C = \bigcap_{H \in \mathcal{H}_C} H \in L(\mathcal{A}_n)$ be their intersection. 
The $(n-1)$-dimensional admissible polytopes containing $C$ (if any) serve only to cut out 
$C$ within $L_C$ via their boundary hyperplanes. Consequently, $C$ avoids all hyperplanes 
of $\mathcal{A}_n$ not in $\mathcal{H}_C$, and is an open connected component of 
$L_C \setminus \bigcup_{H \notin \mathcal{H}_C} H$ relative to $L_C$. Such a component 
is precisely the relative interior $\mathring{F}$ of some face $F \in F(\mathcal{A}_n)$. 
Hence $C = \mathring{F} \cap \Delta_{n,2}$.

The converse follows by reversing the argument: a face $F \in F(\mathcal{A}_n)$ with 
$\mathring{F} \cap \Delta_{n,2} \neq \varnothing$ gives a connected component of the 
complement of all hyperplanes not containing it, which is exactly a chamber.
\end{proof}

\begin{example}
    The hypersimplex $\Delta_{4,2}$ is an octahedron lying in the hyperplane $x_1+x_2 + x_3 + x_4 = 2$ in $\mathbb{R}^4$. There are $8$ chambers determined by the hyperplanes associated with $8$ boundary triangles and $3$ diagonal squares; see Fig.~\ref{fig:chambers}. 

\begin{figure}[ht]
    \centering
    \input{chambers}
    \caption{Three-dimensional chambers of $\Delta_{4,2}$.}
    \label{fig:chambers}
\end{figure}

The admissible sets corresponding to the diagonal squares are
$$
\big\{(1,3), (1,4), (2,3), (2,4)\big\},\;
\big\{(1,2), (1,4), (2,3), (3,4)\big\},\;
\big\{(1,2), (1,3), (2,4), (3,4)\big\}.
$$
The corresponding admissible graphs $G_{\sigma}$ are the complete bipartite graph $K_{2,2}$; they are sketched in the Fig. \ref{fig:K_2,2}.

\begin{figure}[ht]
    \centering
    \tikzset{every picture/.style={line width=0.75pt}} 

\begin{tikzpicture}[x=0.75pt,y=0.75pt,yscale=-1,xscale=1]

\draw [fill=black] (100,100) circle [radius=3.35];
\draw [fill=black] (100,150) circle [radius=3.35];
\draw (95,75) node [anchor=north] {$1$};   
\draw (95,155) node [anchor=north] {$2$};  

\draw [fill=black] (200,100) circle [radius=3.35];
\draw [fill=black] (200,150) circle [radius=3.35];
\draw (205,75) node [anchor=north] {$3$};  
\draw (205,155) node [anchor=north] {$4$}; 

\draw (100,100) -- (200,100);
\draw (100,100) -- (200,150);
\draw (100,150) -- (200,100);
\draw (100,150) -- (200,150);

\draw [fill=black] (300,100) circle [radius=3.35];
\draw [fill=black] (300,150) circle [radius=3.35];
\draw (295,75) node [anchor=north] {$1$};  
\draw (295,155) node [anchor=north] {$3$}; 

\draw [fill=black] (400,100) circle [radius=3.35];
\draw [fill=black] (400,150) circle [radius=3.35];
\draw (405,75) node [anchor=north] {$2$};  
\draw (405,155) node [anchor=north] {$4$}; 

\draw (300,100) -- (400,100);
\draw (300,100) -- (400,150);
\draw (300,150) -- (400,100);
\draw (300,150) -- (400,150);

\draw [fill=black] (500,100) circle [radius=3.35];
\draw [fill=black] (500,150) circle [radius=3.35];
\draw (495,75) node [anchor=north] {$1$};  
\draw (495,155) node [anchor=north] {$4$}; 

\draw [fill=black] (600,100) circle [radius=3.35];
\draw [fill=black] (600,150) circle [radius=3.35];
\draw (605,75) node [anchor=north] {$2$};  
\draw (605,155) node [anchor=north] {$3$}; 

\draw (500,100) -- (600,100);
\draw (500,100) -- (600,150);
\draw (500,150) -- (600,100);
\draw (500,150) -- (600,150);

\end{tikzpicture}
    \caption{Admissible graphs for the diagonal squares.}
    \label{fig:K_2,2}
\end{figure}

According to Theorem~\ref{(n-2)-dim_adm}, the associated hyperplanes $H_{p,q}$ in $\mathbb{R}^4$ are
$$
x_1 + x_2 - x_3 - x_4 = 0,\quad x_1 - x_2 + x_3 - x_4 = 0,\quad x_1  - x_2 - x_3 + x_4 = 0.
$$
Intersecting each with the hyperplane $x_1 + x_2 + x_3 + x_4 = 2$ yields the affine hyperplanes
$$
x_1 + x_2 = 1,\quad x_1 + x_3 = 1,\quad x_1 + x_4 = 1.
$$

According to Theorem~\ref{(n-1)-dim_adm}, there are $6$ three-dimensional admissible polytopes given by a square pyramids. The corresponding admissible graphs $G_\sigma$ are sketched in the Fig. \ref{fig:pyramids}. Each of these graphs is a complete $3$-partite graph.

\begin{figure}[ht]
    \centering
    \input{pyramids}
    \caption{Admissible graphs for the square pyramids.}
    \label{fig:pyramids}
\end{figure}
\end{example}

We now illustrate our results for the case $n = 5$.

\begin{example}
The hypersimplex $\Delta_{5,2}$ is a $4$-dimensional convex polytope lying in $\mathbb{R}^5$. Up to combinatorial isomorphism, there are $4$ types of admissible polytopes of dimension $4$, given by all partitions of $5$ with at least three parts: $1+1+1+1+1$ (the complete graph corresponding to $\Delta_{5,2}$), together with
$$2+1+1+1,\quad 2+2+1,\quad 3+1+1.$$
For these partitions, the numbers of polytopes are $1$, $10$, $15$, and $10$, respectively. This yields a total of $36$ admissible polytopes of full dimension.

Admissible polytopes of dimension $3$ that intersect the interior of $\Delta_{5,2}$ correspond to the partition $3+2$. There are exactly $10$ such admissible polytopes.
\end{example}


\section{Chamber decomposition of $\Delta_{n,2}$ for the $T^n$-action on $\mathbb{C}P^{N_2}$}

In this section, we describe the chamber decomposition of $\Delta_{n,2}$ with respect to the $T^n$-action on $\mathbb{C}P^{N_2}$ induced by the second exterior power of the standard representation of $T^n$ on $\mathbb{C}^n$. We show that the cones in $(\mathfrak t^n)^* \cong \mathbb{R}^n$ spanned by the chambers form the GKZ decomposition of $\cone \Gamma$, where $\Gamma = \{ \bm e^i + \bm e^j \mid 1 \le i < j \le n\}$.

We introduce a hyperplane arrangement $\widetilde{\mathcal{A}}_n$ in $(\mathfrak t^n)^* \cong \mathbb{R}^n$ that determines the walls of the chambers of maximal dimension in $\Delta_{n,2}$ (i.e., of dimension $n-1$). Using this arrangement, we also give a combinatorial description of the polytopes $P_\sigma$ for $\sigma \subseteq \binom{[n]}{2}$ and describe the regular values of the moment map $\widetilde \mu$.

\subsection{GKZ Fan and Chamber Decomposition of $\Delta_{n,2}$}

Let $V$ be a real $k$-dimensional vector space and let $\Gamma = \{ \gamma_1,\ldots, \gamma_m\}$ be a spanning vector configuration in $V^*$. For $\sigma \subseteq [m]$, denote $\Gamma_{\sigma} = \{ \gamma_j \mid j \in \sigma\}$. The cones $\operatorname{cone} \Gamma_{\sigma}$ for $\sigma \subseteq [m]$ do not generally form a fan, since they may intersect in their relative interiors. The \emph{GKZ fan} $\Sigma_{\Gamma}$ is obtained by taking all possible maximal nonempty intersections of these cones. More precisely, for any $\delta \in \operatorname{cone} \Gamma$, set
\begin{equation}
    C(\delta) := \bigcap_{\substack{\sigma \subseteq [m] \\ \delta \in \cone \Gamma_{\sigma}}} \cone \Gamma_{\sigma} = \bigcap_{\substack{\sigma \subseteq [m] \\ \delta \in \operatorname{relint} \cone \Gamma_{\sigma}}} \cone \Gamma_{\sigma}.
\end{equation}
The cones $C(\delta)$ form a fan $\Sigma_{\Gamma}$ with support $\operatorname{cone} \Gamma$, and are called its \emph{chambers}; see \cite{Panov} or \cite{Arzhantsev, GelfandKapranovZelevinsky}.

Recall that the second exterior power $\Lambda^2 \mathbb{C}^n$ of the standard representation of the torus $T^n$ on $\mathbb{C}^n$ defines a configuration $\Gamma = \{ \bm e^i + \bm e^j \mid 1 \le i < j \le n\}$ in the space $V^* = (\mathfrak{t}^n)^*$, which we identify with $\mathbb{R}^n$ using the canonical basis given by the standard one-parameter subgroups of $T^n$.

The configuration $\Gamma$ coincides with the set of vertices of the hypersimplex $\Delta_{n,2}$, and $\cone \Gamma_{\sigma}$ is spanned by the vertices $\bm e^i + \bm e^j$ for all $(i,j) \in \sigma$, where $\sigma \subseteq \binom{[n]}{2}$ is an arbitrary set of $2$-indices (not necessarily admissible). Note that $P_\sigma = \cone \Gamma_{\sigma} \cap \Delta_{n,2}$. Let $\delta \in \Delta_{n,2}$ and let $\omega = \omega(\delta)$ be the collection of sets $\sigma \subseteq \binom{[n]}{2}$ for which $\delta \in \mathring{P}_\sigma$. It follows directly from the definition of the chamber decomposition of the hypersimplex $\Delta_{n,2}$ with respect to the $T^n$-action on $\mathbb{C}P^{N_2}$ that $C_{\omega} = C(\delta) \cap \Delta_{n,2}$. Thus, we have

\begin{remark}
    The chamber decomposition of $\Delta_{n,2}$ with respect to the $T^n$-action on $\mathbb{C}P^{N_2}$ coincides with the intersection of the hypersimplex $\Delta_{n,2}$ with the GKZ decomposition of $\cone \Gamma$ determined by the fan $\Sigma_{\Gamma}$.
\end{remark}

As is well known \cite{GelfandKapranovZelevinsky, Panov}, the GKZ fan $\Sigma_{\Gamma}$ encodes generalised normal fans with cone
generators chosen among the Gale dual configuration $A$.

\subsection{New Arrangement of Hyperplanes}

In Section~\ref{sectionGrassmannian}, we defined the arrangement $\mathcal{A}_n$ of hyperplanes in $(\mathfrak{t}^n)^* \cong \mathbb{R}^n$ that pass through admissible polytopes of codimension one in $\Delta_{n,2}$, i.e., of dimension $n-2$. Recall that the facets of $\Delta_{n,2}$ themselves are admissible. We now introduce a \emph{new arrangement} $\widetilde{\mathcal{A}}_n$ consisting of hyperplanes in $\mathbb{R}^n$ that pass through (not necessarily admissible) polytopes $P_\sigma$ of dimension $n-2$. In the following theorem, we describe the corresponding graph $G_\sigma$ defined in Section~\ref{sectionGrassmannian} as a graph with $V(G_\sigma) = [n]$ and $E(G_\sigma) = \sigma$ in the case where $P_\sigma$ intersects the interior of the hypersimplex $\Delta_{n,2}$ and is maximal with respect to inclusion among such polytopes.

\begin{theorem}
    For an arbitrary $\sigma$, the polytope $P_\sigma$ has dimension $n-2$ and is not contained in any larger polytope of dimension $n-2$ if and only if $G_\sigma$ consists of two connected components: a complete graph $K_l$ and a complete bipartite graph $K_{p,q}$, where either $l = 0$ and $p,q >1$, or $l \ge 3$ and $p,q \ge 1$.
\end{theorem}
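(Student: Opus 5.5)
The plan is to compute $\dim P_\sigma$ from the linear algebra of $\Gamma$ restricted to $\sigma$, as was done for $\mathcal{A}_n$ in Section~\ref{sectionGrassmannian}. Since $P_\sigma=\cone\Gamma_\sigma\cap\Delta_{n,2}$ and all vectors $\bm e^i+\bm e^j$ lie in the affine hyperplane $x_1+\cdots+x_n=2$, we have $\dim P_\sigma=\operatorname{rank}\{\bm e^i+\bm e^j\mid (i,j)\in\sigma\}-1$. By duality, this rank equals $n-\dim S_\sigma$, where
$$S_\sigma=\{\xi\in(\mathbb{R}^n)^*\mid \xi_i+\xi_j=0 \text{ for all } (i,j)\in\sigma\},$$
exactly the system that appeared via $\Gamma^*$ in (\ref{gamma*}).

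\emph{Step 1 (solving the system).} On a connected component of $G_\sigma$, the equations force $\xi$ to be $\pm c$ with alternating signs along edges. Such a solution exists with $c\neq 0$ iff the component is bipartite, and an odd cycle forces $c=0$. Isolated vertices count as bipartite components with a free value. Hence $\dim S_\sigma=b(G_\sigma)$, the number of bipartite components, and
$$\dim P_\sigma=n-1-b(G_\sigma).$$
So $\dim P_\sigma=n-2$ iff $G_\sigma$ has exactly one bipartite component $K$, with parts $X,Y$ (one of which may be empty if $K$ is a single vertex). The spanned hyperplane is then $H_\xi\colon \sum_{i\in X}x_i-\sum_{j\in Y}x_j=0$, where $\xi$ vanishes on the vertex set $U$ of the non-bipartite components.

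\emph{Step 2 (maximality).} Any $P_{\sigma'}\supseteq P_\sigma$ of dimension $n-2$ lies in the same hyperplane. Hence the unique maximal one is $H_\xi\cap\Delta_{n,2}$, whose vertex set consists of all pairs with $\xi_i+\xi_j=0$. These are exactly all pairs inside $U$ together with all pairs between $X$ and $Y$, so the maximal graph is $K_l\sqcup K_{p,q}$ with $l=|U|$, $p=|X|$, $q=|Y|$. For this graph to have exactly one bipartite component, $K_l$ must be empty or non-bipartite, that is, $l=0$ or $l\ge 3$ (the cases $l=1,2$ give a second bipartite component). Conversely, for such $l,p,q$ the graph $K_l\sqcup K_{p,q}$ has $b=1$ by Step 1 and is visibly maximal. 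This gives both directions up to the interior condition.

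\emph{Step 3 (meeting the interior).} This is the step needing care, since it must rule out exactly the facet hyperplanes $x_m=0$ and $x_i-\sum_{j\ne i}x_j=0$ listed for $\mathcal{A}_n$. If $l=0$, then $p+q=n$, and $p=1$ or $q=1$ gives the facet $x_i=\sum_{j\neq i}x_j$ (and $p=0$ or $q=0$ is impossible, since the component would be an isolated vertex with $n=1$); hence $p,q>1$, matching the $H_{p,q}$ case. If $l\ge 3$: when $p=0$ the bipartite component must be a single vertex (it is connected), giving the facet $x_m=0$; so $p,q\ge1$. Then I exhibit an interior point on $H_\xi$ explicitly, namely a point with all $x_i\in(0,1)$, $\sum_X x_i=\sum_Y x_j$, and the remaining mass spread over $U$; this is possible since $l\ge3$ leaves room. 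I expect the main obstacle to be checking this interior-point construction uniformly (including $p=q=1$), together with the claim that a hyperplane meeting $\mathring\Delta_{n,2}$ is not a supporting one, so that $P_\sigma\not\subset\partial\Delta_{n,2}$.
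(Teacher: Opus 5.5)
Your outline is the paper's argument: reduce to the solution space of $\xi_i+\xi_j=0$ for $(i,j)\in\sigma$, note that it is one-dimensional exactly when $G_\sigma$ has a single bipartite component, and use maximality to make the zero set of $\xi$ a clique and the bipartite component complete. Your formula $\dim P_\sigma=n-1-b(G_\sigma)$ and your explicit exclusion of the two facet types are more careful than the paper's remarks on $l=1,2$. However, one assertion in Step~2 is false, and the error carries over into Step~3.

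For $l\ge 3$ the slice $H_\xi\cap\Delta_{n,2}$ is \emph{not} the convex hull of the vertices $\bm e^i+\bm e^j$ with $\xi_i+\xi_j=0$. It has vertices that are not vertices of $\Delta_{n,2}$, so it is not any $P_{\sigma'}$ at all. Take $n=5$ and $\xi=(1,-1,0,0,0)$. The hyperplane $x_1=x_2$ crosses the edge $[\bm e^1+\bm e^3,\bm e^2+\bm e^3]$ of $\Delta_{5,2}$ at its midpoint $(\tfrac{1}{2},\tfrac{1}{2},1,0,0)$, which is therefore a vertex of the slice. This point is not in $\operatorname{conv}\{\bm e^1+\bm e^2,\bm e^3+\bm e^4,\bm e^3+\bm e^5,\bm e^4+\bm e^5\}$, where every point satisfies $x_3\le 1-x_1$.

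Your conclusion still holds, because it only needs the first sentence of Step~2. If $P_{\sigma'}\supseteq P_\sigma$ has dimension $n-2$, then $\operatorname{aff}P_{\sigma'}=\operatorname{aff}P_\sigma\subseteq H_\xi$, so every $(i,j)\in\sigma'$ satisfies $\xi_i+\xi_j=0$. Hence $\sigma'\subseteq\sigma_{\max}:=\{(i,j)\mid \xi_i+\xi_j=0\}$. Since $P_{\sigma_{\max}}$ has dimension $n-2$ and contains $P_\sigma$, it is the unique maximal polytope. State the step that way. The slice and $P_{\sigma_{\max}}$ agree only for $l=0$, where the slice is $\{\sum_{i\in X}x_i=1\}\cap\Delta_{n,2}\cong\Delta^{p-1}\times\Delta^{q-1}$.

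For the same reason, in Step~3 an interior point of $\Delta_{n,2}$ lying on $H_\xi$ need not lie in $P_\sigma$; in the example, interior points on $x_1=x_2$ near $(\tfrac{1}{2},\tfrac{1}{2},1,0,0)$ are not in $P_\sigma$. So constructing such a point does not by itself show that $P_\sigma$ meets the interior. You can close this in either of two ways:

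\emph{Facet argument.} A convex subset of $\partial\Delta_{n,2}$ lies in a single facet $F$. So $P_\sigma\subseteq\partial\Delta_{n,2}$ together with $\dim P_\sigma=n-2$ would force $\operatorname{aff}P_\sigma=\operatorname{aff}F$, a supporting hyperplane that misses the interior, contradicting your interior point on $H_\xi$.

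\emph{Direct point of $P_\sigma$.} Use the barycenter of the vertices of $P_\sigma$. With $M=pq+\binom{l}{2}$, its coordinates are $q/M$ on $X$, $p/M$ on $Y$ and $(l-1)/M$ on $U$. All of these lie in $(0,1)$ when $l=0,\ p,q\ge 2$ or when $l\ge 3,\ p,q\ge 1$.
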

\begin{proof}
Reasoning similarly to Section~\ref{sectionGrassmannian}, we see that such polytopes $P_\sigma$ correspond precisely to linear functionals $\xi \in (\mathbb{R}^n)^*$ with coordinates $(\xi_1,\ldots, \xi_n)$ such that the system of equations $\xi_i + \xi_j = 0$ for $(i,j) \in \sigma$ has a one-dimensional solution space.

Suppose there exists a vertex $i_0 \in \{1,\ldots, n\}$ such that $\xi_{i_0} = 0$. Consider all vertices $i \in \{1,\ldots, n\}$ for which $\xi_i = 0$. It follows directly from the definition of $G_\sigma$ that the induced subgraph $K$ of $G_\sigma$ on these vertices is a complete graph $K=K_l$ and a connected component of $G_\sigma$.

Now, consider a connected component $C$ of $G_\sigma$ different from this complete graph. Let $i_1 \in \{1,\ldots,n\}$ be a vertex of $C$. Then $\xi_{i_1} \neq 0$. Let $\{i_2,\ldots, i_p\} \subseteq \{1,\ldots,n\}$ be all vertices of $C$ that are not adjacent to $i_1$. From the connectivity of $C$ and the definition of $G_\sigma$, it follows that $\xi_{i_1} = \xi_{i_2} = \cdots = \xi_{i_p}$. Let $\{j_1,\ldots, j_q\} \subseteq \{1,\ldots,n\}$ be all vertices of $C$ adjacent to $i_1$. Then $\xi_{j_1} = \cdots = \xi_{j_q} = -\xi_{i_1} = -\xi_{i_2} = \cdots = -\xi_{i_p}$. Therefore, $C$ is a complete bipartite graph $K_{p,q}$. If $G_\sigma$ has more than one bipartite connected component, then the covector $\xi$ is not uniquely determined, since it would belong to a subspace of $(\mathbb{R}^n)^*$ of dimension at least $2$. An example of such a graph $G_\sigma$ is shown in Fig.~\ref{fig:mygraph}.

It remains only to observe that if $l = 1$, then the resulting polytope $P_\sigma$ lies on the boundary $\partial \Delta_{n,2}$. If $l = 2$, the complete component $K_l$ has only one edge and yields a new nonzero parameter. Hence we obtain an alternative: either $l = 0$ and $p,q >1$, or $l \ge 3$ and $p,q \ge 1$.
\end{proof}

\begin{figure}[ht]
    \centering
    \tikzset{every picture/.style={line width=0.75pt}} 

\begin{tikzpicture}[x=0.75pt,y=0.75pt,yscale=-1,xscale=1]

\draw    (160.34,150.62) -- (305.98,122.9) ;
\draw [shift={(305.98,122.9)}, rotate = 349.22] [color={rgb, 255:red, 0; green, 0; blue, 0 }  ][fill={rgb, 255:red, 0; green, 0; blue, 0 }  ][line width=0.75]      (0, 0) circle [x radius= 3.35, y radius= 3.35]   ;
\draw [shift={(160.34,150.62)}, rotate = 349.22] [color={rgb, 255:red, 0; green, 0; blue, 0 }  ][fill={rgb, 255:red, 0; green, 0; blue, 0 }  ][line width=0.75]      (0, 0) circle [x radius= 3.35, y radius= 3.35]   ;
\draw    (304.65,91.81) -- (160.34,90.69) ;
\draw [shift={(160.34,90.69)}, rotate = 180.44] [color={rgb, 255:red, 0; green, 0; blue, 0 }  ][fill={rgb, 255:red, 0; green, 0; blue, 0 }  ][line width=0.75]      (0, 0) circle [x radius= 3.35, y radius= 3.35]   ;
\draw [shift={(304.65,91.81)}, rotate = 180.44] [color={rgb, 255:red, 0; green, 0; blue, 0 }  ][fill={rgb, 255:red, 0; green, 0; blue, 0 }  ][line width=0.75]      (0, 0) circle [x radius= 3.35, y radius= 3.35]   ;
\draw    (160.34,90.69) -- (305.98,122.9) ;
\draw [shift={(305.98,122.9)}, rotate = 12.47] [color={rgb, 255:red, 0; green, 0; blue, 0 }  ][fill={rgb, 255:red, 0; green, 0; blue, 0 }  ][line width=0.75]      (0, 0) circle [x radius= 3.35, y radius= 3.35]   ;
\draw [shift={(160.34,90.69)}, rotate = 12.47] [color={rgb, 255:red, 0; green, 0; blue, 0 }  ][fill={rgb, 255:red, 0; green, 0; blue, 0 }  ][line width=0.75]      (0, 0) circle [x radius= 3.35, y radius= 3.35]   ;
\draw   (114,121.5) .. controls (114,83.45) and (134.75,52.6) .. (160.34,52.6) .. controls (185.93,52.6) and (206.68,83.45) .. (206.68,121.5) .. controls (206.68,159.55) and (185.93,190.39) .. (160.34,190.39) .. controls (134.75,190.39) and (114,159.55) .. (114,121.5) -- cycle ;
\draw    (305.98,122.9) -- (160.34,121.5) ;
\draw [shift={(160.34,121.5)}, rotate = 180.55] [color={rgb, 255:red, 0; green, 0; blue, 0 }  ][fill={rgb, 255:red, 0; green, 0; blue, 0 }  ][line width=0.75]      (0, 0) circle [x radius= 3.35, y radius= 3.35]   ;
\draw [shift={(305.98,122.9)}, rotate = 180.55] [color={rgb, 255:red, 0; green, 0; blue, 0 }  ][fill={rgb, 255:red, 0; green, 0; blue, 0 }  ][line width=0.75]      (0, 0) circle [x radius= 3.35, y radius= 3.35]   ;
\draw   (259.64,122.9) .. controls (259.64,84.85) and (280.39,54) .. (305.98,54) .. controls (331.57,54) and (352.32,84.85) .. (352.32,122.9) .. controls (352.32,160.95) and (331.57,191.79) .. (305.98,191.79) .. controls (280.39,191.79) and (259.64,160.95) .. (259.64,122.9) -- cycle ;
\draw    (426,129.99) -- (549,129.99) ;
\draw [shift={(549,129.99)}, rotate = 0] [color={rgb, 255:red, 0; green, 0; blue, 0 }  ][fill={rgb, 255:red, 0; green, 0; blue, 0 }  ][line width=0.75]      (0, 0) circle [x radius= 3.35, y radius= 3.35]   ;
\draw [shift={(426,129.99)}, rotate = 0] [color={rgb, 255:red, 0; green, 0; blue, 0 }  ][fill={rgb, 255:red, 0; green, 0; blue, 0 }  ][line width=0.75]      (0, 0) circle [x radius= 3.35, y radius= 3.35]   ;
\draw    (487.5,75.6) -- (487.5,184.39) ;
\draw [shift={(487.5,184.39)}, rotate = 90] [color={rgb, 255:red, 0; green, 0; blue, 0 }  ][fill={rgb, 255:red, 0; green, 0; blue, 0 }  ][line width=0.75]      (0, 0) circle [x radius= 3.35, y radius= 3.35]   ;
\draw [shift={(487.5,75.6)}, rotate = 90] [color={rgb, 255:red, 0; green, 0; blue, 0 }  ][fill={rgb, 255:red, 0; green, 0; blue, 0 }  ][line width=0.75]      (0, 0) circle [x radius= 3.35, y radius= 3.35]   ;
\draw    (305.98,150.62) -- (160.34,121.5) ;
\draw [shift={(160.34,121.5)}, rotate = 191.31] [color={rgb, 255:red, 0; green, 0; blue, 0 }  ][fill={rgb, 255:red, 0; green, 0; blue, 0 }  ][line width=0.75]      (0, 0) circle [x radius= 3.35, y radius= 3.35]   ;
\draw [shift={(305.98,150.62)}, rotate = 191.31] [color={rgb, 255:red, 0; green, 0; blue, 0 }  ][fill={rgb, 255:red, 0; green, 0; blue, 0 }  ][line width=0.75]      (0, 0) circle [x radius= 3.35, y radius= 3.35]   ;
\draw    (160.34,150.62) -- (305.98,150.62) ;
\draw [shift={(305.98,150.62)}, rotate = 0] [color={rgb, 255:red, 0; green, 0; blue, 0 }  ][fill={rgb, 255:red, 0; green, 0; blue, 0 }  ][line width=0.75]      (0, 0) circle [x radius= 3.35, y radius= 3.35]   ;
\draw [shift={(160.34,150.62)}, rotate = 0] [color={rgb, 255:red, 0; green, 0; blue, 0 }  ][fill={rgb, 255:red, 0; green, 0; blue, 0 }  ][line width=0.75]      (0, 0) circle [x radius= 3.35, y radius= 3.35]   ;
\draw    (305.98,150.62) -- (160.34,90.69) ;
\draw [shift={(160.34,90.69)}, rotate = 202.37] [color={rgb, 255:red, 0; green, 0; blue, 0 }  ][fill={rgb, 255:red, 0; green, 0; blue, 0 }  ][line width=0.75]      (0, 0) circle [x radius= 3.35, y radius= 3.35]   ;
\draw [shift={(305.98,150.62)}, rotate = 202.37] [color={rgb, 255:red, 0; green, 0; blue, 0 }  ][fill={rgb, 255:red, 0; green, 0; blue, 0 }  ][line width=0.75]      (0, 0) circle [x radius= 3.35, y radius= 3.35]   ;
\draw    (304.65,91.81) -- (160.34,121.5) ;
\draw [shift={(160.34,121.5)}, rotate = 168.38] [color={rgb, 255:red, 0; green, 0; blue, 0 }  ][fill={rgb, 255:red, 0; green, 0; blue, 0 }  ][line width=0.75]      (0, 0) circle [x radius= 3.35, y radius= 3.35]   ;
\draw [shift={(304.65,91.81)}, rotate = 168.38] [color={rgb, 255:red, 0; green, 0; blue, 0 }  ][fill={rgb, 255:red, 0; green, 0; blue, 0 }  ][line width=0.75]      (0, 0) circle [x radius= 3.35, y radius= 3.35]   ;
\draw    (304.65,91.81) -- (160.34,150.62) ;
\draw [shift={(160.34,150.62)}, rotate = 157.83] [color={rgb, 255:red, 0; green, 0; blue, 0 }  ][fill={rgb, 255:red, 0; green, 0; blue, 0 }  ][line width=0.75]      (0, 0) circle [x radius= 3.35, y radius= 3.35]   ;
\draw [shift={(304.65,91.81)}, rotate = 157.83] [color={rgb, 255:red, 0; green, 0; blue, 0 }  ][fill={rgb, 255:red, 0; green, 0; blue, 0 }  ][line width=0.75]      (0, 0) circle [x radius= 3.35, y radius= 3.35]   ;
\draw   (487.5,75.6) -- (549,129.99) -- (487.5,184.39) -- (426,129.99) -- cycle ;

\draw (205.45,193.8) node [anchor=north west][inner sep=0.75pt]    {$C=K_{p,q}$};
\draw (463.17,195.2) node [anchor=north west][inner sep=0.75pt]    {$K=K_{l}$};

\end{tikzpicture}
    \caption{Connected components of the graph $G_{\sigma}$.}
    \label{fig:mygraph}
\end{figure}

We have shown that $\widetilde{\mathcal{A}}_n$ consists of the hyperplanes of $\mathcal{A}_n$ together with the following series of hyperplanes for $n \ge 5$:
\begin{equation}
   \sum^{p}_{k=1} x_{i_k} - \sum^{q}_{m=1} x_{j_m} = 0, \quad \text{where } p + q \le n-3,\;\; p,q \ge 1.
\end{equation}
Therefore, $\widetilde{\mathcal{A}}_n \supsetneq \mathcal{A}_n$ for $n \ge 5$.

\begin{corollary}
    The chamber decompositions of $\Delta_{n,2}$ induced by the $T^n$-actions on $G_{n,2}$ and $\mathbb{C}P^{N_2}$ coincide for $n = 4$ but differ for $n \ge 5$.
\end{corollary}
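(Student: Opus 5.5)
The plan is to treat the two assertions separately. For $n \ge 5$ I will exhibit one explicit non-admissible polytope of dimension $n-2$ that separates a chamber of the $G_{n,2}$-decomposition. For $n=4$ I will use that $\widetilde{\mathcal{A}}_4 = \mathcal{A}_4$.

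\emph{Case $n \ge 5$.} I take $p=q=1$, which is allowed because $p+q = 2 \le n-3$. Concretely, let $\sigma = \{(1,2)\} \cup \binom{\{3,\ldots,n\}}{2}$, so that $G_\sigma = K_{1,1} \sqcup K_{n-2}$ with $l = n-2 \ge 3$. By the preceding theorem, $P_\sigma$ is $(n-2)$-dimensional, meets the interior of $\Delta_{n,2}$, and spans the hyperplane $x_1 - x_2 = 0$ inside $\{x_1+\cdots+x_n=2\}$.

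Next I check that no hyperplane of $\mathcal{A}_n$ meets $\{\sum x_i = 2\}$ in the same affine hyperplane as $x_1 = x_2$. The candidates are $x_i = 0$, $x_i = \sum_{j\neq i}x_j$ (equivalently $x_i = 1$), and $\sum_{A} x_i = 1$ with $|A| = p$, $2 \le p \le n-2$. None of these has the form $x_1 - x_2 = c$ modulo $\sum x_i$: such a form needs coefficients $(1,-1,0,\ldots,0)$ up to adding a multiple of $(1,\ldots,1)$ and rescaling, and none of the listed functionals has that shape.

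It follows that a generic point $\delta \in \operatorname{relint} P_\sigma$ lies off every hyperplane of $\mathcal{A}_n$, and also off $\partial\Delta_{n,2}$. By Proposition~\ref{arrangement}, $\delta$ then lies in an $(n-1)$-dimensional chamber of the decomposition for $G_{n,2}$. On the other hand, in the decomposition for $\mathbb{C}P^{N_2}$ we have $\sigma \in \omega(\delta)$. Hence the chamber of $\delta$ is contained in $\mathring{P}_\sigma$, and so its dimension is at most $n-2$. The two decompositions therefore differ.

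\emph{Case $n=4$.} The extra series requires $p+q \le 1$ with $p,q \ge 1$, which is impossible, so $\widetilde{\mathcal{A}}_4 = \mathcal{A}_4$. Every admissible set is in particular an allowed $\sigma$, so each chamber for $\mathbb{C}P^{N_2}$ is contained in a chamber for $G_{4,2}$.

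For the reverse inclusion I would argue as in Proposition~\ref{arrangement}. The boundary of each $P_\sigma$ consists of faces $P_\tau$ of dimension at most $n-2$ (faces of $P_\sigma$ are again of the form $P_\tau$, being convex hulls of vertex subsets). Each such face lies either in $\partial\Delta_{4,2}$ or in a hyperplane of $\widetilde{\mathcal{A}}_4$. Consequently the relative interior of every face of the arrangement $\widetilde{\mathcal{A}}_4 = \mathcal{A}_4$, intersected with $\Delta_{4,2}$, is either entirely inside or entirely outside each $\mathring{P}_\sigma$. So each chamber for $G_{4,2}$, which by Proposition~\ref{arrangement} is such a face interior, lies in a single chamber for $\mathbb{C}P^{N_2}$, and the decompositions coincide.

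The main obstacle is this last step. It needs the analogue of Proposition~\ref{arrangement} for arbitrary $\sigma$, in particular control of lower-dimensional non-admissible $P_\sigma$ in the interior. For $n=4$ I expect this to be handled by a direct check: a $P_\sigma$ of dimension at most $2$ meeting the interior must be one of the diagonal squares, with $G_\sigma = K_{2,2}$. Alternatively, one can appeal to \cite{BT0}.
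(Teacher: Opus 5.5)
\textbf{The case $n \ge 5$ is correct.} It is also more complete than the paper, which only records $\widetilde{\mathcal A}_n\supsetneq\mathcal A_n$ (and $\widetilde{\mathcal A}_4=\mathcal A_4$). The paper leaves implicit why an extra hyperplane changes the decomposition. Your generic point of $\mathring P_\sigma$ for $G_\sigma=K_{1,1}\sqcup K_{n-2}$ supplies exactly this step: it lies in an $(n-1)$-dimensional chamber for $G_{n,2}$, but in a chamber of dimension at most $n-2$ for $\mathbb{C}P^{N_2}$. Your comparison of normals, showing that $x_1=x_2$ is not in $\mathcal A_n$, is fine.

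\textbf{The gap is in the case $n=4$.} Your ``consequently'' step holds for $3$-dimensional $P_\sigma$. Their facets span hyperplanes of $\widetilde{\mathcal A}_4=\mathcal A_4$. A convex (hence connected) set $\mathring F\cap\Delta_{4,2}$ meeting both $\mathring P_\sigma$ and its complement would have to meet a facet hyperplane, hence lie in it, hence miss $\mathring P_\sigma$.

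For lower-dimensional $P_\sigma$ this is not enough. You also need the linear span of $P_\sigma$, and of its faces meeting the interior, to lie in $L(\mathcal A_4)$. This does not follow from $\widetilde{\mathcal A}_4=\mathcal A_4$; it is exactly the phenomenon of the paper's remark on interior polytopes of dimension $\le n-3$.

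The direct check you propose rests on a false claim: the diagonal squares are not the only $P_\sigma$ of dimension $\le 2$ meeting the interior. Two non-admissible counterexamples:
\begin{itemize}
\item $\sigma=\{(1,2),(3,4)\}$ gives the main diagonal $[\bm e^1+\bm e^2,\bm e^3+\bm e^4]$, which passes through the centre $(\tfrac12,\tfrac12,\tfrac12,\tfrac12)$.
\item The paper's own example $\sigma=\{(1,2),(1,3),(3,4)\}$ gives a triangle that meets the interior.
\end{itemize}

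\textbf{How to repair it.} The statement still holds, but these cases must be checked explicitly. The $P_\sigma$ of dimension $\le 2$ meeting the interior are:
\begin{itemize}
\item the $3$ diagonal squares;
\item the $12$ triangles spanned by three vertices of a square;
\item the $3$ main diagonals.
\end{itemize}
Each main diagonal is an intersection of two square hyperplanes, for example $[\bm e^1+\bm e^2,\bm e^3+\bm e^4]=\{x_1+x_3=1\}\cap\{x_1+x_4=1\}\cap\Delta_{4,2}$, and the third square hyperplane cuts it at the centre. Each triangle is half of a square, cut off by a diagonal of that square, and this diagonal lies in another hyperplane of $\mathcal A_4$. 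Hence every such $\mathring P_\sigma$ is a union of sets $\mathring F\cap\Delta_{4,2}$, i.e.\ a union of $G_{4,2}$-chambers.

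You also need to treat the boundary separately, since Proposition~\ref{arrangement} covers only interior chambers. If $\mathring P_\sigma$ meets $\partial\Delta_{4,2}$, a supporting-hyperplane argument shows that $P_\sigma$ lies in a triangular facet. So $P_\sigma$ is a face of the octahedron, hence admissible, and the two decompositions agree on $\partial\Delta_{4,2}$.

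Falling back on the result of Buchstaber and Terzi\'c for $n=4$, as you suggest, is essentially what the paper does. But the direct verification as you stated it would not go through.
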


\begin{example}
    For $n \ge 5$, $\widetilde{\mathcal{A}}_n \setminus \mathcal{A}_n$ contains hyperplanes of the form $x_i = x_j$ for $1 \le i < j \le n$. The complete arrangement of all such hyperplanes is known as the \emph{braid arrangement} or the \emph{Coxeter arrangement of type $A$}.
\end{example}

In the following theorem, we determine the explicit form of the polytopes $P_\sigma$ corresponding to the hyperplanes in $\widetilde{\mathcal{A}}_n \setminus \mathcal{A}_n$.

\begin{theorem}
    Let $P_\sigma$ be an $(n-2)$-dimensional polytope given by the graph $G_\sigma = K_{p,q} \sqcup K_l$ with $p,q \ge 1$ and $l \ge 3$, i.e., $P_\sigma$ is non-admissible, intersects the interior of $\Delta_{n,2}$, and is not contained in any larger $(n-2)$-dimensional polytope. Then $P_\sigma$ is combinatorially equivalent to the join $(\Delta^{p-1} \times \Delta^{q-1}) * \Delta_{l,2}$.
\end{theorem}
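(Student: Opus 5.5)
The plan is to split the vertex set of $P_\sigma$ according to the two connected components of $G_\sigma$, identify the convex hull of each part explicitly, and then recognise $P_\sigma$ as their join. Write $[n]=I\sqcup J\sqcup S$, where $K_{p,q}$ has parts $I$ and $J$ with $|I|=p$ and $|J|=q$, and $K_l$ is the complete graph on $S$ with $|S|=l$. Then
\[
P_\sigma=\operatorname{conv}(Q_1\cup Q_2),\qquad Q_1=\operatorname{conv}\{\bm e^i+\bm e^j\mid i\in I,\ j\in J\},\qquad Q_2=\operatorname{conv}\{\bm e^a+\bm e^b\mid a,b\in S,\ a\ne b\}.
\]

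\emph{Step 1: identify $Q_1$.} Consider the linear map $\mathbb{R}^I\oplus\mathbb{R}^J\to\mathbb{R}^{I\cup J}\subset\mathbb{R}^n$ given by $(u,v)\mapsto u+v$. It is injective, because $I$ and $J$ are disjoint. It sends the vertex $(\bm e^i,\bm e^j)$ of $\Delta^{p-1}\times\Delta^{q-1}=\operatorname{conv}\{(\bm e^i,\bm e^j)\}$ to $\bm e^i+\bm e^j$. Hence $Q_1$ is affinely isomorphic to $\Delta^{p-1}\times\Delta^{q-1}$, with $\dim Q_1=p+q-2$. This is the same identification as in the proof of Theorem~\ref{(n-2)-dim_adm}.

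\emph{Step 2: identify $Q_2$.} The polytope $Q_2$ is, by definition, the hypersimplex $\Delta_{l,2}$ placed in the coordinate subspace $\mathbb{R}^S$. Since $l\ge 3$, we have $\dim Q_2=l-1$.

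\emph{Step 3: check that the affine hulls are skew.} The affine hull of $Q_1$ lies in $\{x_S=0,\ \sum_{I\cup J}x_k=2\}$, and that of $Q_2$ lies in $\{x_{I\cup J}=0,\ \sum_S x_k=2\}$. The direction spaces are contained in $\mathbb{R}^{I\cup J}$ and $\mathbb{R}^S$ respectively, so they intersect trivially. The affine hulls themselves are disjoint, since one has $\sum_S x_k=0$ and the other has $\sum_S x_k=2$. Thus $\operatorname{aff}Q_1$ and $\operatorname{aff}Q_2$ are skew.

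\emph{Step 4: conclude that $P_\sigma$ is the join.} I would use the standard fact that the convex hull of two polytopes with skew affine hulls is combinatorially their join. Concretely, take the map
\[
Q_1\times Q_2\times[0,1]\to P_\sigma,\qquad (x,y,t)\mapsto(1-t)x+ty .
\]
The parameter is recovered as $t=\tfrac12\sum_{k\in S}z_k$. When $t\neq 0,1$, the points $x$ and $y$ are recovered by projecting $z$ to $\mathbb{R}^{I\cup J}$ and $\mathbb{R}^S$ and rescaling. So the map is injective away from the two ends, which is exactly the topological join. For the combinatorics, a linear functional $\xi=\xi'+\xi''$ splits along $\mathbb{R}^{I\cup J}\oplus\mathbb{R}^S$. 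Adding suitable multiples of $\sum_{I\cup J}x_k$ and $\sum_S x_k$, which are constant on each piece, one can make the maxima of $\xi$ on $Q_1$ and on $Q_2$ any prescribed common value. It follows that the faces of $P_\sigma$ are exactly the sets $\operatorname{conv}(F_1\cup F_2)$, where $F_1$ is a face of $Q_1$ and $F_2$ is a face of $Q_2$, each possibly empty. This is precisely the face lattice of $Q_1*Q_2$. As a consistency check, $\dim P_\sigma=(p+q-2)+(l-1)+1=n-2$, as it should be.

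I expect Step 4 to be the main obstacle: one must show that every such pair $(F_1,F_2)$ really occurs as a face, which requires choosing supporting functionals on the two pieces independently. This is handled by the splitting of $\xi$ and the free additive constants described above. All remaining steps are routine.
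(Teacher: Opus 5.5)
Your proof is correct and follows the same route as the paper. Like the paper, you split $P_\sigma$ along the two connected components into $P(K_{p,q})\cong\Delta^{p-1}\times\Delta^{q-1}$ and $P(K_l)\cong\Delta_{l,2}$, and recognise $P_\sigma$ as their join. Your Steps~3 and~4 go further than the paper: you show the affine hulls are skew (the functional $\sum_{k\in S}x_k$ separates them) and that the face lattice of $P_\sigma$ is the product of the two face lattices. The paper's proof only notes that the two pieces share no vertices, which by itself would not force a join, so your argument supplies the justification that step needs.
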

\begin{proof}
Recall that the edges of $G_\sigma$ correspond to the vertices of $P_\sigma$. Moreover, two vertices of $P_\sigma$ are incident if the corresponding edges of $G_\sigma$ share a common vertex. Thus, we obtain the polytopes 
$$P(K_{p,q}) = \operatorname{conv}\{ \bm e^i + \bm e^j \mid (i,j) \in E(K_{p,q})\} \quad \text{and} \quad P(K_l) = \operatorname{conv}\{ \bm e^i + \bm e^j \mid (i,j) \in E(K_l)\},$$
which have no common vertices. Consequently, $P_{\sigma} = P(K_{p,q}) * P(K_l)$. But $P(K_{p,q}) \cong \Delta^{p-1} \times \Delta^{q-1}$ and $P(K_l) \cong \Delta_{l,2}$. Thus,
$$P_\sigma \cong (\Delta^{p-1} \times \Delta^{q-1}) * \Delta_{l,2}.$$
\end{proof}

We now describe all polytopes $P_\sigma$ of dimension $\dim P_\sigma \le n-2$. This follows immediately.

\begin{theorem}
    Let $P_\sigma$ be a polytope of dimension $\dim P_\sigma \le n-2$. Then $P_\sigma$ is the join $A * B$ of subpolytopes $A \subseteq \Delta^{p-1} \times \Delta^{q-1}$ and $B \subseteq \Delta_{l,2}$ with $p+q \ge 1$, $l \ge 0$, and $p+q+l = n$.
\end{theorem}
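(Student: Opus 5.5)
The argument reduces the statement to the two preceding theorems by monotonicity. Let $P_\sigma$ have dimension at most $n-2$. If $P_\sigma$ lies in the boundary $\partial\Delta_{n,2}$, then it lies in a facet. For a hypersimplex facet $\{x_m=0\}\cong\Delta_{n-1,2}$ we can take $p+q=0$ on the bipartite side, $B=P_\sigma\subseteq\Delta_{l,2}$ with $l=n$, and the vertex $m$ counted inside the complete-graph part. For a simplex facet $\{x_m=1\}$, i.e.\ $G_\sigma$ a subgraph of a star $K_{1,n-1}$, we take $A=P_\sigma\subseteq\Delta^{0}\times\Delta^{n-2}$ and $l=0$. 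So the essential case is when $P_\sigma$ meets the interior of $\Delta_{n,2}$.

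In that case, since $\dim P_\sigma\le n-2$, the affine span of $P_\sigma$ is a proper subspace of $\{x_1+\dots+x_n=2\}$. Hence there is a nonzero functional $\xi$, not proportional to $(1,\dots,1)$, vanishing on all vertices $\bm e^i+\bm e^j$ with $(i,j)\in\sigma$. Then $\sigma\subseteq\sigma'=\{(i,j)\mid \xi_i+\xi_j=0\}$, and $P_\sigma\subseteq P_{\sigma'}$. I would choose $\xi$ with a one-dimensional solution space, i.e.\ take $P_{\sigma'}$ of dimension exactly $n-2$ and maximal under inclusion; this is possible by enlarging $\sigma$ while keeping the dimension at most $n-2$. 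The argument in the proof of the preceding classification theorem then shows $G_{\sigma'}=K_{p,q}\sqcup K_l$ with $p+q+l=n$. In that proof, the vertices with $\xi_i=0$ form the clique $K_l$, and those with $\xi_i=\pm c$ form the two sides of $K_{p,q}$.

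The join structure comes from a linear-algebra argument. By the previous theorem, $P_{\sigma'}=P(K_{p,q})*P(K_l)$, where the two pieces lie in the complementary coordinate subspaces $\mathbb{R}^{p+q}$ and $\mathbb{R}^{l}$, so the two affine spans are skew. Since $\sigma\subseteq\sigma'$, we split $\sigma=\sigma_1\sqcup\sigma_2$ with $\sigma_1\subseteq E(K_{p,q})$ and $\sigma_2\subseteq E(K_l)$. Set $A=P_{\sigma_1}\subseteq P(K_{p,q})\cong\Delta^{p-1}\times\Delta^{q-1}$ and $B=P_{\sigma_2}\subseteq P(K_l)\cong\Delta_{l,2}$. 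Then $P_\sigma=\operatorname{conv}(A\cup B)$. Because $A$ and $B$ lie in skew affine subspaces (disjoint coordinate supports, and neither affine span contains the origin since the coordinate sum is $2$), this convex hull is the join $A*B$. Either piece may be empty, which covers the degenerate cases.

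The main obstacle is the maximality step: we need that every $P_\sigma$ of dimension at most $n-2$ meeting the interior is contained in a maximal $(n-2)$-dimensional $P_{\sigma'}$ of the classified form. I would handle this directly by picking any $\xi$ in the solution space of $\xi_i+\xi_j=0$ for $(i,j)\in\sigma$ that is not proportional to $(1,\dots,1)$. Even without one-dimensionality, the sign pattern of $\xi$ gives a decomposition of $[n]$ into the zero set $Z$ and, for each value $c>0$, the pair $\{\xi=c\}$, $\{\xi=-c\}$. Then $\sigma$ lies in the union of a clique on $Z$ and several bipartite pieces. If there are several bipartite pieces, I would merge all but one into the clique side via a modified functional; otherwise I would accept a slightly weaker statement in which the condition $p+q\ge1$ absorbs this case. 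Whether the merge is always possible is the delicate point, and I would verify it first on small $n$.
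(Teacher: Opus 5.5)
Your route is the one the paper intends. The paper gives no argument beyond ``this follows immediately'' from the classification of maximal $(n-2)$-dimensional $P_{\sigma'}$ as $K_{p,q}\sqcup K_l$ together with the join theorem. Your justification of the join is actually more careful than the paper's ``no common vertices'': you use that the two affine hulls lie in complementary coordinate subspaces with coordinate sum $2$.

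\textbf{The gap.} You leave open exactly the step that carries the content. You first assert that $\sigma$ can be enlarged to a $\sigma'$ with one-dimensional solution space. You then declare this the main obstacle, call the merge ``delicate'', and fall back on possibly weakening the statement. As written, nothing establishes that every $\sigma$ with $\dim P_\sigma\le n-2$ admits a partition $[n]=P\sqcup Q\sqcup L$ with $\sigma\subseteq E(K(P,Q))\sqcup E(K_L)$. No weakening is needed, because both of your versions close in one line.

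\emph{The merge.} An edge $(i,j)\in\sigma$ has $\xi_j=-\xi_i$, so its endpoints lie in the same level set $\{|\xi|=c\}$ with opposite signs. You may replace $\xi$ by $0$ on all level sets but one, or simply by the sign vector $\xi'_i=\operatorname{sign}\xi_i$. Either way $\xi'_i+\xi'_j=0$ still holds on $\sigma$. Then $P=\{\xi'=1\}$, $Q=\{\xi'=-1\}$, $L=\{\xi'=0\}$ does the job, with $p+q\ge 1$ because $\xi'\neq 0$. This handles the boundary cases as well, so your case split is unnecessary.

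\emph{The enlargement.} Adding one vertex $\bm e^i+\bm e^j$ raises the dimension of the convex hull by at most one. Hence a $\sigma'\supseteq\sigma$ that is maximal among sets with $\dim P_{\sigma'}\le n-2$ has dimension exactly $n-2$, and its solution space is spanned by a single $\xi$. Maximality also gives $\sigma'=\{(i,j)\mid \xi_i+\xi_j=0\}$, which is the hypothesis the classification argument actually uses.

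\textbf{A smaller slip.} In the hypersimplex-facet case you take $p+q=0$ and $l=n$. The statement excludes this, and allowing it would make the statement vacuous, since every $P_\sigma$ lies in $\Delta_{n,2}$. Instead put the isolated vertex $m$ alone into $P$, with $q=0$ and $l=n-1$, so that $A=\varnothing$ and $B=P_\sigma\subseteq\Delta_{n-1,2}$. The sign-vector construction never produces $p+q=0$.
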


Polytopes of maximal dimension, namely dimension $n-1$, have a more complicated structure. They can be described as all possible nonempty intersections of half-spaces determined by hyperplanes from $\widetilde{\mathcal{A}}_n$ with the hypersimplex $\Delta_{n,2}$.

In particular, the maximal nonempty intersections yield chambers of maximal dimension in $\Delta_{n,2}$. Thus, we have proved the following.

\begin{theorem}
    The regions of the hyperplane arrangement $\widetilde{\mathcal{A}}_n$ are exactly the maximal-dimensional chambers in $\Delta_{n,2}$ for the $T^n$-action on $\mathbb{C}P^{N_2}$.
\end{theorem}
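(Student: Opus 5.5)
The plan is to prove both inclusions between the set of maximal-dimensional chambers $C_\omega$ and the set of regions of $\widetilde{\mathcal{A}}_n$ intersected with $\Delta_{n,2}$. By the Remark identifying the chamber decomposition with the GKZ fan, a point $\delta\in\mathring\Delta_{n,2}$ lies in an $(n-1)$-dimensional chamber exactly when $C(\delta)$ is a full-dimensional cone of $\Sigma_\Gamma$. Since $C(\delta)$ is determined by which cones $\cone\Gamma_\sigma$ contain $\delta$, it suffices to understand when the membership of $\delta$ in these cones changes.

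\textbf{Step 1: the walls are the hyperplanes of $\widetilde{\mathcal{A}}_n$.} Fix $\sigma$ with $\dim P_\sigma=n-1$. The boundary of $P_\sigma$ relative to $\Delta_{n,2}$ is a union of facets, and each facet is a polytope $P_\tau$ of dimension $n-2$ with $\tau\subseteq\sigma$. Such a facet spans a hyperplane through the origin; by the classification theorem for $(n-2)$-dimensional $P_\sigma$, that hyperplane belongs to $\widetilde{\mathcal{A}}_n$. Here one enlarges $\tau$ to the maximal $\tau'$ giving the same hyperplane, which is again of the form $K_{p,q}\sqcup K_l$ or a boundary facet. Hence $\partial P_\sigma\subseteq\bigcup_{H\in\widetilde{\mathcal{A}}_n}H$.

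\textbf{Step 2: chambers are unions of regions.} Let $R$ be a region of $\widetilde{\mathcal{A}}_n$ with $R\cap\Delta_{n,2}\neq\varnothing$. The set $R\cap\mathring\Delta_{n,2}$ is convex, hence connected, and by Step 1 it meets no boundary of any full-dimensional $P_\sigma$. Therefore, for each full-dimensional $\sigma$, either $R\cap\Delta_{n,2}\subseteq\mathring P_\sigma$ or $R\cap\Delta_{n,2}$ is disjoint from $P_\sigma$. Moreover, $R$ avoids every lower-dimensional $P_\sigma$, since these lie in hyperplanes of $\widetilde{\mathcal{A}}_n$. So the collection $\omega$ is constant on $R\cap\Delta_{n,2}$, and this set lies in a single chamber $C_\omega$ of dimension $n-1$.

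\textbf{Step 3: regions are not merged.} We must show that two distinct regions give distinct $\omega$. Equivalently, every hyperplane $H\in\widetilde{\mathcal{A}}_n$ meeting $\mathring\Delta_{n,2}$ is a genuine wall: for points $\delta$ of $H$ in general position, some full-dimensional $P_\sigma$ must contain one side of $\delta$ near $\delta$ but not the other.

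To do this, take the maximal $\tau$ with $P_\tau\subset H$, of type $K_{p,q}\sqcup K_l$, and add a single edge $e\notin\tau$ whose vertex $\gamma_e$ lies on one chosen side of $H$. Then $\sigma=\tau\cup\{e\}$ gives a full-dimensional pyramid over $P_\tau$. This pyramid lies entirely on that side of $H$ and has $P_\tau$ as a facet. Near a generic point of $\mathring P_\tau$, points on the other side of $H$ are not in $P_\sigma$, so $\omega$ differs across $H$.

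Two things need checking:
\begin{itemize}
\item Generic points of $H\cap\mathring\Delta_{n,2}$ lie in $\mathring P_\tau$. I would argue this by noting that $P_\tau=H\cap\Delta_{n,2}$: every vertex of $\Delta_{n,2}$ lying on $H$ corresponds to an edge of the maximal $\tau$.
\item Vertices exist on both sides of $H$, which holds because $H$ meets the interior of $\Delta_{n,2}$.
\end{itemize}
I expect Step 3, especially the identity $P_\tau=H\cap\Delta_{n,2}$, to be the main obstacle. It amounts to checking that for $\xi$ with $\xi_i\in\{0,\pm1\}$ as in the proof of the classification theorem, the set $\{(i,j):\xi_i+\xi_j=0\}$ is exactly the edge set of $K_{p,q}\sqcup K_l$.

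Combining Steps 2 and 3 gives a bijection between regions meeting $\Delta_{n,2}$ and $(n-1)$-dimensional chambers, via $C_\omega=R\cap\Delta_{n,2}$.
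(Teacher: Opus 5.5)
\textbf{Steps 1 and 2 are correct, but Step 3 fails and cannot be repaired.} Steps 1 and 2 show that every region of $\widetilde{\mathcal{A}}_n$ meeting $\Delta_{n,2}$ lies in a single $(n-1)$-dimensional chamber.

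\textbf{Where Step 3 breaks.} The break is exactly at the identity $P_\tau=H\cap\Delta_{n,2}$ you flagged, and your proposed check does not give it. Verifying that $\{(i,j)\mid \xi_i+\xi_j=0\}$ is the edge set of $K_{p,q}\sqcup K_l$ only shows $\operatorname{conv}(H\cap\operatorname{vert}\Delta_{n,2})=P_\tau$. A hyperplane section of a polytope also has a vertex wherever $H$ strictly crosses an edge, and the edges of $\Delta_{n,2}$ are the segments $[\bm e^a+\bm e^b,\bm e^a+\bm e^c]$.
\begin{enumerate}
\item For $H\in\mathcal{A}_n$ the values of $\xi$ on vertices lie in $\{2,0,-2\}$. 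Two vertices sharing an index cannot take the values $2$ and $-2$, so no edge is crossed and the identity holds. This is why the analogous statement for $\mathcal{A}_n$ is true.
\item For the new hyperplanes ($\xi=1$ on $P$, $-1$ on $Q$, $0$ on $L$, $l\ge 3$), take $i\in L$, $j\in P$, $k\in Q$. The edge $[\bm e^i+\bm e^j,\bm e^i+\bm e^k]$ has $\xi$-values $+1$ and $-1$, so $H$ contains the point $\bm e^i+\frac12(\bm e^j+\bm e^k)$.
\item This point is not in $P_\tau$. In a convex combination of vertices from $\tau$, comparing coordinate sums over $L$ forces the total weight of $L$--$L$ pairs to be $\frac12$, hence $x_i\le\frac12$, whereas the point has $x_i=1$.
\end{enumerate}

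\textbf{A counterexample to the statement itself.} Take $n=5$, $H=\{x_1=x_2\}$, $\tau=\{(1,2),(3,4),(3,5),(4,5)\}$, and the interior perturbation $\delta=(0.45,0.45,0.95,0.09,0.06)\in\mathring{\Delta}_{5,2}\cap H$ of the crossing point $(\frac12,\frac12,1,0,0)$.
\begin{enumerate}
\item On $\sum x_i=2$ the hyperplanes of $\widetilde{\mathcal{A}}_5$ read $x_i=0$, $x_i=1$, $x_a+x_b=1$ and $x_a=x_b$. Checking coordinates and pairwise sums, $H$ is the only one through $\delta$.
\item $\delta\notin P_\tau$: writing $\delta=a(\bm e^1+\bm e^2)+b(\bm e^3+\bm e^4)+c(\bm e^3+\bm e^5)+d(\bm e^4+\bm e^5)$ forces $d=-0.4$.
\item Any facet of a full-dimensional $P_\sigma$ through $\delta$, and any lower-dimensional $P_\sigma$ through $\delta$, would lie in a hyperplane of $\widetilde{\mathcal{A}}_5$ through $\delta$. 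That hyperplane must be $H$, so the polytope would lie in $\operatorname{conv}(\Gamma\cap H)=P_\tau$, which is impossible.
\item Hence $\omega$ is locally constant near $\delta$. The $4$-dimensional chamber containing $\delta$ meets $H$ and contains points on both sides of it, so it is not a region.
\end{enumerate}
The same construction works for every new hyperplane, so the statement as formulated fails for every $n\ge 5$, and so does the corollary on regular values, since $\delta$ is a regular value lying on $H$.

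\textbf{What your argument actually proves.} Steps 1--2 establish only that the regions of $\widetilde{\mathcal{A}}_n$ refine the maximal chambers. The correct description of the maximal chambers is as the connected components of $\mathring{\Delta}_{n,2}\setminus\bigcup P_\tau$, taken over the $(n-2)$-dimensional $P_\tau$. In other words, the walls are the cones $\cone\Gamma_\tau$, not the full hyperplanes.

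The paper's own two-line justification does not help. It claims that full-dimensional $P_\sigma$ are the nonempty intersections of half-spaces of $\widetilde{\mathcal{A}}_n$ with $\Delta_{n,2}$, and that maximal such intersections are chambers. This tacitly makes the same assumption, so it does not supply the missing step either.
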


\begin{remark}
    Unlike the $T^n$-action on $G_{n,2}$, there exist polytopes $P_\sigma$ of dimension $\dim P_\sigma \le n-3$ that intersect the interior of the hypersimplex $\Delta_{n,2}$. Therefore, the hyperplane arrangement $\widetilde{\mathcal{A}}_n$ may not suffice to describe chambers of dimension less than $n-1$.
\end{remark}

According to \cite{BT2}, the regular values of the moment map $\widetilde \mu$ coincide with the chambers of maximal dimension. Thus, we have obtained

\begin{corollary}
    The regular values of the moment map $\widetilde \mu \colon \mathbb{C}P^{N_2} \to \Delta_{n,2}$ are the interior points $x \in \Delta_{n,2}$ such that $x \notin H$ for all hyperplanes $H \in \widetilde{\mathcal{A}}_n$.
\end{corollary}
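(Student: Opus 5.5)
The plan is to characterize regular values directly through the Hamiltonian geometry of $\widetilde\mu=\Gamma\circ\widehat\mu$, and then to translate the resulting condition into the language of the arrangement $\widetilde{\mathcal{A}}_n$ using the preceding results of this section.

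\textbf{Step 1: the differential of $\widetilde\mu$ on a stratum.} For a point $z\in\widetilde W_\sigma$, the standard fact for torus moment maps is that the image of $d\widetilde\mu_z$ is the annihilator of the Lie algebra of the stabilizer of $z$ in $T^n$. Since $t\cdot z$ multiplies $z_{ij}$ by $t_it_j$, the identity component of this stabilizer has Lie algebra
$$\{\xi\in\mathfrak t^n \mid \xi_i+\xi_j=\xi_{i_0}+\xi_{j_0}\ \text{for all}\ (i,j)\in\sigma\}.$$
Passing to the affine hull of $\Delta_{n,2}$, this says the following: $z$ is a regular point, meaning $d\widetilde\mu_z$ has rank $n-1$, if and only if $\dim P_\sigma=n-1$. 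The reason is that the linear span of the differences $(\bm e^i+\bm e^j)-(\bm e^{i_0}+\bm e^{j_0})$ with $(i,j)\in\sigma$ is exactly the direction space of $\operatorname{aff}P_\sigma$. Hence an interior point $x\in\Delta_{n,2}$ is a critical value if and only if $x\in\widetilde\mu(\widetilde W_\sigma)=\mathring P_\sigma$ for some $\sigma$ with $\dim P_\sigma\le n-2$. Here I use that $\widetilde\mu(\widetilde W_\sigma)=\mathring P_\sigma$, since $\widehat\mu$ maps $\widetilde W_\sigma$ onto the open face of $\Delta^N$. Boundary points of $\Delta_{n,2}$ are always critical values, because their preimages lie in strata with a vanishing coordinate $x_m=0$ or in the simplices $x_m=1$, and these have lower rank.

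\textbf{Step 2: reduction to hyperplanes.} It remains to show that an interior $x$ lies in some $P_\sigma$ with $\dim P_\sigma\le n-2$ if and only if $x$ lies on a hyperplane of $\widetilde{\mathcal{A}}_n$.

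For the direction ``$\Leftarrow$'': a hyperplane $H\in\widetilde{\mathcal{A}}_n$ passing through the interior satisfies $H\cap\Delta_{n,2}=P_\sigma$ for the maximal $\sigma$ described in the preceding theorem. This holds because $H\cap\Delta_{n,2}$ is the convex hull of the vertices lying on $H$. Then $x\in P_\sigma$, so $x$ lies in the relative interior of some face $P_\tau$ of $P_\sigma$, and that face has dimension at most $n-2$.

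For the direction ``$\Rightarrow$'': if $\dim P_\sigma\le n-2$, enlarge $\sigma$ to $\sigma'=\{(i,j)\mid \bm e^i+\bm e^j\in\operatorname{aff}P_\sigma\}$. If $\dim P_{\sigma'}<n-2$, choose a hyperplane $H$ through $\operatorname{aff}P_{\sigma'}$ and through the origin, so that it contains the cone. Among all such hyperplanes, pick one containing the maximum number of vertices of $\Delta_{n,2}$. If $H\cap\Delta_{n,2}$ still had dimension below $n-2$, the hyperplane could be rotated about a codimension-two subspace until it meets an additional vertex, while still containing $x$; this contradicts maximality. So eventually $H\cap\Delta_{n,2}=P_{\sigma''}$ with $\dim P_{\sigma''}=n-2$, and therefore $H\in\widetilde{\mathcal{A}}_n$ by the definition of $\widetilde{\mathcal{A}}_n$ and the preceding classification theorem.

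\textbf{Main obstacle.} The delicate step is this rotation argument. It needs care to ensure that the rotated hyperplane still passes through $x$ and the origin and hits a new vertex rather than degenerating. I would carry it out in the linear span $\operatorname{span}(\operatorname{aff}P_{\sigma'}\cup\{0\})$, which has dimension $\dim P_{\sigma'}+1<n-1$. I would extend this span by one vertex $v$ not in it at a time, which is possible since the vertices span $\mathbb{R}^n$, stopping once the dimension reaches $n-1$. The result is a linear hyperplane spanned by vertices and containing $x$. A hyperplane spanned by vertices cuts $\Delta_{n,2}$ in a polytope $P_{\sigma''}$ of dimension $n-2$. Since it passes through the interior point $x$, it is not a facet hyperplane, and so it belongs to $\widetilde{\mathcal{A}}_n$. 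This replaces the rotation by a simple spanning argument, which is the version I would actually write. Combining Steps 1 and 2 with the preceding theorem identifying the regions of $\widetilde{\mathcal{A}}_n$ with the maximal chambers yields the corollary.
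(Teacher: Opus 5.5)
Your proof breaks in the direction ``$\Leftarrow$'' of Step~2, where you claim that for $H\in\widetilde{\mathcal{A}}_n$ meeting the interior, $H\cap\Delta_{n,2}$ is the convex hull of the vertices of $\Delta_{n,2}$ lying on $H$. This holds for the hyperplanes of $\mathcal{A}_n$: for $\sum_I x_i=\sum_J x_j$ with $I\sqcup J=[n]$, no edge of $\Delta_{n,2}$ has its endpoints strictly on opposite sides. It fails for every new hyperplane $\sum_I x_i=\sum_J x_j$ with $L=[n]\setminus(I\cup J)\neq\varnothing$. For $a\in L$, $b\in I$, $c\in J$, the edge from $\bm e^a+\bm e^b$ (value $+1$) to $\bm e^a+\bm e^c$ (value $-1$) crosses $H$ at its midpoint. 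That midpoint is a vertex of $H\cap\Delta_{n,2}$ but not of $\Delta_{n,2}$. Concretely, take $n=5$ and $H=\{x_1=x_2\}$. The vertices on $H$ give $\sigma=\{(1,2),(3,4),(3,5),(4,5)\}$. Every point of $P_\sigma$ has the form $(a,a,b+c,b+d,c+d)$ with $a,b,c,d\ge 0$ and $a+b+c+d=1$, so it satisfies $x_1+x_3\le 1$. The interior point $x=(0.4,0.4,0.8,0.3,0.1)$ lies on $H$ but has $x_1+x_3=1.2$, so $x\notin P_\sigma$.

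This cannot be patched, because the statement itself fails at $x$. No coordinate of $x$ is $0$ or $1$, no two coordinates sum to $1$, and $x_1=x_2$ is the only coincidence among coordinates. So $H$ is the only hyperplane of $\widetilde{\mathcal{A}}_5$ through $x$. Your own Step~1 and spanning argument now show that $x$ is a regular value. Suppose $x\in\mathring{P}_\tau$ with $\dim P_\tau\le 3$. Enlarging a maximal independent subset of $\Gamma_\tau$ by vertices to $4$ independent vertices spans a hyperplane of $\widetilde{\mathcal{A}}_5$ through $x$, which must be $H$. Then $\tau\subseteq\sigma$ and $x\in P_\sigma$, a contradiction. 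The same construction near $\bm e^3+\tfrac12(\bm e^1+\bm e^2)$ on $x_1=x_2$ works for every $n\ge 5$.

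What Step~1 and your ``$\Rightarrow$'' argument do prove is that the interior critical values are exactly $\mathring{\Delta}_{n,2}\cap\bigcup_{H\in\widetilde{\mathcal{A}}_n}P_{\sigma(H)}$, where $\sigma(H)=\{(i,j)\mid \bm e^i+\bm e^j\in H\}$. This gives only one inclusion: interior points off every $H\in\widetilde{\mathcal{A}}_n$ are regular values. Equality holds for $n=4$, where $\widetilde{\mathcal{A}}_4=\mathcal{A}_4$.

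The paper's derivation breaks at the same place. It combines the Buchstaber--Terzi\'c fact that regular values form the maximal chambers with the preceding theorem identifying those chambers with the regions of $\widetilde{\mathcal{A}}_n$. But the codimension-one cones $\cone\Gamma_\tau$ inside $H$ cover only $\cone\Gamma_{\sigma(H)}\subsetneq H\cap\cone\Gamma$, so the maximal chamber containing $x$ crosses $H$.
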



\section{Concluding Remarks}

In this final section, we provide interpretations of the results on the $T^n$-action on $G_{n,2}$ obtained in the preceding sections. The \emph{complex of admissible polytopes} $C(G_{n,2}, \Delta_{n,2})$ is defined in \cite{BT1} as the pair 
$$C(G_{n,2}, \Delta_{n,2}) = \Big( \bigsqcup_{\sigma \text{ admissible}} \mathring{P}_\sigma,\; d_{C} \Big),$$
where the first component is the formal disjoint union of the relative interiors of $P_\sigma$, and $d_{C}$ is a boundary operator on this union that sends each $\mathring{P}_\sigma$ to the disjoint union of all its faces. The sets $\mathring{P}_\sigma$, for admissible $\sigma$, are called the \emph{cells} of the complex $C(G_{n,2}, \Delta_{n,2})$.

\begin{remark}
In Theorem~\ref{classification}, we showed that there is a bijection between the cells of the complex $C(G_{n,2}, \Delta_{n,2})$ and complete $N$-partite graphs on $m \le n$ vertices, where $N \ge 2$. The formula 
$$\dim \mathring{P}_\sigma = 
\begin{cases} 
m - 2, & \text{if } N = 2,\\[4pt]
m - 1, & \text{if } N \ge 3
\end{cases}$$
(Theorem~\ref{dimensionadmissiblepolytope}) provides a direct computation of the cell dimensions in terms of these graph invariants.
\end{remark}

For the complex $C(G_{n,2}, \Delta_{n,2})$ there is a natural projection
\begin{equation}\label{complexprojection}
    C(G_{n,2}, \Delta_{n,2}) \longrightarrow \Delta_{n,2},
\end{equation}
which maps each component $\mathring{P}_\sigma$ in the disjoint union to $\mathring{P}_\sigma$ as a subset of $\Delta_{n,2}$. Thus, when we restrict the projection to the components of $C(G_{n,2}, \Delta_{n,2})$, we obtain homeomorphisms onto their images. Nevertheless, for $n \ge 4$, the positive complexity of the $T^n$-action on $G_{n,2}$ implies that the projection (\ref{complexprojection}) is not a bijection.

\begin{remark}
    The operations of vertex isolation and merging of parts described in Theorem~\ref{(n-1)-dim_adm} provide an explicit combinatorial rule for computing the boundary operator $d_{C}$ in $C(G_{n,2}, \Delta_{n,2})$.
\end{remark}

\newpage


\subsection*{Acknowledgements}
The author expresses his deep gratitude to his supervisor, Victor M. Buchstaber, for fostering a creative atmosphere, providing generous guidance, and making an invaluable contribution to his mathematical education. The author thanks Svetlana Terzi\'c for kind and helpful discussions of the results of this paper.

The author is also grateful to Nikolai Yu. Erokhovets for drawing his attention to the embedding of the hypersimplex into the simplex, and to Taras E. Panov for drawing his attention to the GKZ decomposition.

The author thanks Eugene S. Kogan, Semyon O. Malygin and Fedor E. Vylegzhanin for many useful discussions and insightful suggestions.

The study has been funded within the framework of the HSE University Basic Research Program (HSE-BR-2025-023)

\normalsize
\end{document}